\newcommand{\Vol}{\rm Vol}
\newcommand{\Alb}{\rm Alb}
\begin{document}
\title{Geography of irregular Gorenstein 3-folds}
\author{Tong Zhang}
\date{\today}

\address{Department of Mathematics, University of Alberta, Edmonton, Alberta T6G 2G1, Canada}
\email{tzhang5@ualberta.ca}

\subjclass[2010]{14J30}
\keywords{3-fold, geography, irregular variety}

\begin{abstract}
In this paper, we study the explicit geography problem of irregular Gorenstein minimal 3-folds of general type. We generalize the classical Noether-Castelnuovo
inequalities for irregular surfaces to irregular 3-folds according to the Albanese dimension.
\end{abstract}

\maketitle

\theoremstyle{plain}
\newtheorem{theorem}{Theorem}[section]
\newtheorem{lemma}[theorem]{Lemma}
\newtheorem{coro}[theorem]{Corollary}
\newtheorem{prop}[theorem]{Proposition}
\newtheorem{defi}[theorem]{Definition}
\newtheorem{ques}[theorem]{Question}
\newtheorem{conj}[theorem]{Conjecture}

\theoremstyle{remark}
\newtheorem{remark}[theorem]{\bf Remark}
\newtheorem{assumption}[theorem]{\bf Assumption}
\newtheorem{example}[theorem]{\bf Example}
\newtheorem{obser}[theorem]{\bf Observation}

\tableofcontents

\section{Introduction}
We work over an algebraically closed field of characteristic $0$.

A projective variety $X$ is called \textit{irregular}, if $h^1(\mathcal O_X)>0$, i.e.,
$X$ has a nontrivial Albanese map. Denote by $a(X) \subseteq {\Alb}(X)$
the image of $X$ under its Albanese map. The Albanese dimension $\dim a(X)$ can vary from one to $\dim X$. We say
$X$ is \emph{of Albanese dimension $m$}, if $\dim a(X)=m$. In particular, we say
$X$ is \emph{of maximal Albanese dimension}, if $\dim a(X)=\dim X$.

The purpose of this paper is to study the geography
problem of irregular varieties.

Let $C$ be a projective curve of genus $g > 0$. One has
$$
\deg(\omega_C)=2\chi(\omega_C) \ge 0.
$$

The above result has several 2-dimensional generalizations.
For an irregular minimal surface $S$ of general type (with ADE singularities), $\chi(\omega_S)>0$. One has the Noether, Castelnuovo and Severi inequalities for \textit{irregular} surfaces proved
respectively by Bombieri \cite{Bo}, Horikawa \cite{Ho2} and Pardini \cite{Pa}.
Namely,
\begin{itemize}
\item[(1)] Noether inequality: $K^2_S \ge 2 \chi(\omega_S)$ if $S$ is irregular (see \cite{Bo});
\item[(2)] Castelnuovo inequality: $K^2_S \ge 3 \chi(\omega_S)$ if the Albanese fiber is not hyperelliptic of genus $2$ or $3$ (see \cite{Ho2});
\item[(3)] Severi inequality: $K^2_S \ge 4 \chi(\omega_S)$ if $S$ is of maximal Albanese dimension (See \cite{Se, Pa}).
\end{itemize}
The above results concern the geography of irregular surfaces (of Albanese fiber dimension $\le 1$) and they have played a very important role in the surface theory. Also, in the recent work of Lu \cite{Lu1, Lu2} and Lopes-Pardini \cite{MP2}, results of such type have been applied to study the hyperbolicity of irregular surfaces.

Having seen the importance of such results, as a natural question, one can ask
\begin{ques} \label{question1}
What are the Noether, Castelnuovo and Severi inequalities for  irregular Gorenstein 3-folds?
\end{ques}

Here we assume the 3-folds being Gorenstein so that $\chi(\omega_X)>0$ and we can have nontrivial inequalities
$$
K^3_X \ge a \chi(\omega_X)
$$
with $a>0$ similar to the surface case.

In fact, several questions of the similar type have been raised before.
In early 1980's, Miles Reid asked the following question: \textit{what is the Noether inequality for 3-folds}?
Also, as an open problem in \cite[\S 3.9]{Ch}, Chen conjectured that
\begin{conj} \label{question2}
For Gorenstein minimal 3-fold $X$ of general type,
there should be a Noether inequality in the following form:
$$
K^3_X \ge a \chi(\omega_X) - b
$$
where $a (>1)$, $b$ are both positive rational numbers.
\end{conj}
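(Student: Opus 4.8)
The plan is to exploit the standing hypothesis that $X$ is irregular: its Albanese map $a_X\colon X\to A={\Alb}(X)$ is nontrivial, and I would organize the argument according to the Albanese dimension $m=\dim a_X(X)\in\{1,2,3\}$. In each stratum, $a_X$ (composed with a suitable projection of $A$) produces a fibration of a smooth model of $X$ over a lower-dimensional base of positive irregularity, and the strategy is to bound $K^3_X$ from below by combining the relative positivity of the Hodge-theoretic sheaves $f_*\omega_{X/Y}$ with the appropriate Noether, Castelnuovo, or Severi inequality on the fibers. Throughout I would pass to a smooth minimal model, keep track of $K^3_X\geq K^3_{X/Y}$, and record how $\chi(\omega_X)$ decomposes in terms of the fibration data; the regular case falls outside the reach of these tools and would be Chen's original question, treated separately.

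For Albanese dimension $m=1$ the Albanese image is a curve, so Stein factorization yields a fibration $f\colon X\to B$ onto a smooth curve $B$ of genus $\geq 1$ whose general fiber $F$ is a smooth minimal surface of general type. Here I would invoke a slope inequality for fibered threefolds, namely a lower bound of the shape $K^3_{X/B}\geq \lambda\deg f_*\omega_{X/B}$, and feed in the surface Noether inequality on the fibers in its form $K^2_F\geq 2\chi(\omega_F)$. Since $g(B)\geq 1$ forces $\deg f_*\omega_{X/B}$ and $\chi(\omega_X)$ to differ only by controlled terms coming from $\chi(\omega_F)$ and $\chi(\omega_B)$, rearranging should produce an inequality $K^3_X\geq a\chi(\omega_X)-b$ with $a>1$ and with an explicit error term $b$.

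For $m=2$ and $m=3$ I would proceed analogously, with a curve (respectively a point) as the generic fiber of the induced map to $a_X(X)$. The maximal Albanese dimension case $m=3$ is the Severi-type regime, and here I would imitate Pardini's argument: pull $X$ back along the multiplication-by-$n$ isogenies of $A$, apply the inequality already obtained to the resulting étale covers, and let $n\to\infty$. Because an étale cover multiplies both $K^3$ and $\chi(\omega)$ by its degree while the additive error term becomes negligible relative to the leading terms, the limiting argument should sharpen the coefficient $a$ and even allow $b=0$ in this stratum.

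The hard part will be the fibered case $m=1$: one needs a slope inequality for threefold fibrations that is simultaneously uniform and carries the correct constant $\lambda$, and one must handle those fibers $F$ for which the plain surface Noether inequality is not the governing bound, precisely the hyperelliptic genus $2$ and $3$ fibers flagged in the surface Castelnuovo inequality (2) of the introduction. Controlling these exceptional fibers, together with ensuring that the covers in the $m=3$ limiting argument remain Gorenstein and minimal so that the invariants transform multiplicatively, is where I expect the bulk of the technical effort to concentrate.
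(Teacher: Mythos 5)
This statement is Chen's conjecture, which the paper does \emph{not} prove: it is quoted in the introduction as an open problem motivating the work, and the paper's actual results (Theorems \ref{3fold}, \ref{alb1}, \ref{alb12}) establish it only for \emph{irregular} $X$. Your proposal has the same limitation, but you present it as a proof strategy for the conjecture itself, and that is where the genuine gap lies. The conjecture quantifies over all Gorenstein minimal 3-folds of general type, including regular ones with $h^1(\mathcal O_X)=0$. For those there is no nontrivial Albanese map, hence no fibration to stratify over, and your entire apparatus (Stein factorization of the Albanese map, semipositivity of $f_*\omega_{X/Y}$, Pardini's isogeny limit) has nothing to act on. You acknowledge this by saying the regular case "would be Chen's original question, treated separately," but that case \emph{is} Chen's question; deferring it means the conjecture is not proved. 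The regular Gorenstein case is precisely the part the paper leaves open (it is resolved only by the external results of Chen--Chen and Hu cited at the end of the introduction, which use entirely different techniques based on $p_g(X)$ rather than Albanese fibrations).

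Within the irregular case your outline does track the paper's strategy, but with one substantive inaccuracy: for Albanese dimension $2$ you propose to "invoke a slope inequality" for the induced fibration, whereas the paper states explicitly that a good slope inequality for fibered 3-folds over surfaces is not known, and it substitutes the relative Noether inequalities of Theorems \ref{relnoether1}, \ref{relnoether2} and \ref{essential}, proved via the decomposition algorithm of Theorem \ref{decomposition} applied to a pencil cut out by $|B|$. Similarly, in Albanese dimension $1$ the coefficient survives the exceptional fibers only because of the case-by-case analysis of Propositions \ref{pg23}--\ref{pg11} (yielding $a=\tfrac43$ and $a=\tfrac{12}{7}$ for the fibers with $(p_g(F),K^2_F)=(2,1)$ and $(3,2)$); a generic slope-plus-fiber-Noether argument does not by itself deliver $a>1$ there. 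These are repairable in the irregular stratum, but the missing regular case is not repairable by this method.
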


As is mentioned in \cite{Ch}, \textit{any bound $a>1$ is nontrivial and interesting}. However, it might be
more difficult than the inequality between $K^3_X$ and $p_g(X)$. One possible problem comes from
the difficulty to understand $h^1(\mathcal O_X)$ and $h^2(\mathcal O_X)$. Another problem may be due to the non-smoothness.
For example, when $X$ is smooth and minimal, it is proved in \cite{CaMChZ} that $K^3_X \ge \frac 23(2p_g(X)-5)$. From this, Chen
and Hacon \cite{MChH} have proved that $a=\frac 89$. But if $X$ is Gorenstein, it is still an open question whether $K^3_X \ge \frac 23(2p_g(X)-5)$
holds. See \cite[Conjecture 4.4]{CaMChZ}.

Recently, the Severi inequality has been proved by Barja \cite{Ba} and by the author \cite{Zh} independently: Let $X$ be an irregular minimal Gorenstein 3-fold of general type. If $X$ is of maximal Albanese dimension, then
$$
K^3_X \ge 12 \chi(\omega_X).
$$

Our first purpose of this paper is to give a complete answer to Question \ref{question1} by proving the following Noether-Castelnuovo inequalities.

\begin{theorem} \label{3fold}
Let $X$ be an irregular minimal Gorenstein 3-fold of general type.
If $X$ has Albanese fiber dimension one,
then
$$
K^3_X \ge 4 \chi(\omega_X).
$$
Moreover, if the Albanese fiber is not hyperelliptic of genus $\le 5$, then
$$
K^3_X \ge 6 \chi(\omega_X).
$$
\end{theorem}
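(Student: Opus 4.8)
The plan is to exploit the Albanese fibration structure. Since $X$ has Albanese fiber dimension one, the image $a(X)$ is a surface, and the Albanese map (after suitable birational modification and Stein factorization) gives a fibration $f\colon X \to Y$ whose general fiber $F$ is a smooth projective curve of some genus $g$. Because $X$ is irregular with a surface Albanese image, $F$ maps nontrivially into the abelian variety $\mathrm{Alb}(X)$, so $F$ has genus $g \ge 1$ and in fact the restriction $\omega_X|_F$ is a positive line bundle whose degree is bounded below by $2g-2$. The overall strategy is to run a slope-type inequality for the fibration, relating the volume $K_X^3$ to fiber invariants and to $\chi(\omega_X)$, much as one proves the surface Noether and Castelnuovo inequalities via Xiao's slope method.

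\medskip

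First I would set up the relative canonical sheaf and study the Harder--Narasimhan filtration (or the positivity via the Fujita-type semipositivity) of $f_* \omega_{X/Y}$, together with the Clifford-type bound on the linear system $|\omega_X|_F|$ on the general fiber. The key numerical identity is the decomposition of $K_X^3$ into a "horizontal" contribution governed by the fiber genus and a "vertical" contribution coming from the base surface $Y$. Since $Y \subseteq \mathrm{Alb}(X)$ is of maximal Albanese dimension as a surface, I would invoke the surface Severi inequality (item (3) above, $K^2 \ge 4\chi$) applied on $Y$ (or on the relevant canonical image), combined with the contribution of the fibers. Concretely, the bound $K_X^3 \ge 4\chi(\omega_X)$ should emerge by writing $\chi(\omega_X)$ in terms of $\chi$ of the pushforward sheaves on $Y$ and controlling each graded piece by its slope, where the slope of $\omega_{X/Y}$ on a genus-$g$ fiber contributes the factor governed by $\deg(\omega_X|_F)/(g-1) \ge 2$, which after combining with the base inequality yields the coefficient $4$.

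\medskip

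For the improved bound $K_X^3 \ge 6\chi(\omega_X)$, the extra input is the Clifford/Castelnuovo-type restriction: when the general Albanese fiber $F$ is not hyperelliptic of genus $\le 5$, the canonical restriction $\omega_X|_F$ (or more precisely the relevant subsystem) satisfies a stronger Clifford inequality, increasing the effective slope from $2$ to $3$ on the fiber, exactly paralleling how Horikawa's Castelnuovo inequality $K^2 \ge 3\chi$ on surfaces improves Noether's $K^2 \ge 2\chi$ once the genus-$2,3$ hyperelliptic fibers are excluded. The arithmetic then doubles the surface coefficient via the fibration to land at $6$.

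\medskip

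The hard part will be two-fold. First, establishing the correct semipositivity and slope inequality for the 3-fold fibration $f$ in the Gorenstein (singular) setting, where one cannot freely pass to smooth models without losing control of $K_X^3$; I would handle this by working with a terminalization or a canonical resolution and carefully tracking how $K^3$ and $\chi(\omega_X)$ change, using that $X$ is Gorenstein so $\chi(\omega_X)>0$ and $\omega_X$ is Cartier. Second, and most delicate, is pinning down the precise Clifford bound on $\omega_X|_F$ and its interaction with the base-surface Severi inequality so that the products combine to give exactly $4$ and $6$ rather than weaker constants; this requires knowing when equality in the fiberwise Clifford estimate forces the hyperelliptic genus-$\le 5$ exception, which is where the sharp threshold in the statement comes from.
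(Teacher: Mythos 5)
There is a genuine gap, and it is structural rather than a missing detail. Your plan is to run a Xiao-type slope inequality, via the Harder--Narasimhan filtration of $f_*\omega_{X/Y}$, for the Albanese fibration $f\colon X\to Y$ where $Y$ is a \emph{surface}. That tool does not exist: HN-filtration slope methods are a curve-base technique, and the paper states explicitly in the introduction that a good slope inequality for fibered 3-folds over surfaces is not known -- circumventing this is the entire point of Section 5. What the paper does instead is prove ``relative Noether inequalities'' (Theorems \ref{relnoether1}, \ref{relnoether2}, \ref{essential}): pick two general members of $|B|=|f^*L|$ for $L$ ample on the base, blow up their intersection to get a pencil of \emph{surfaces} over $\mathbb{P}^1$, and run the inductive fiber-subtraction machinery of Theorem \ref{decomposition}, Proposition \ref{algcase1} and Lemma \ref{algsumai}, bounding $r_i$ against $d_i$ on those surfaces via the Noether/Castelnuovo-type estimates of Section 4 (e.g.\ $r_i\le \tfrac12 d_i+2$ from Theorem \ref{noether2}). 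This is where the constants $4$ and $6$ actually come from: $4=2\times 2$ with one factor from the surface Noether bound and the other from the telescoping inequality $P_0L_0^2\ge 2\sum a_id_i-2d_0$. The base surface $Y$ contributes nothing beyond the ample class $L$; your proposed appeal to the Severi inequality on $Y$ plays no role and I do not see how it could, since $\chi(\omega_X)$ is not controlled by invariants of $Y$ in any direct way.

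You also never say how $\chi(\omega_X)$ enters, and the two ingredients that accomplish this are both absent from your sketch. First, the Green--Lazarsfeld generic vanishing theorem (Theorem \ref{albanese}) gives $h^1(\mathcal{O}_{X_d})=o(d^{2m})$ on the Albanese liftings $X_d=X\times_{\mu_d}A$, hence $h^0(K_{X_d})\ge d^{2m}\chi(\omega_X)+o(d^{2m})$; without this there is no passage from the $h^0(K_X)$ bound to a $\chi(\omega_X)$ bound. Second, Pardini's covering/limiting trick: $K^3_{X_d}=d^{2m}K^3_X$ while $B_d$ scales like $d^{-2}$ under $\phi_d^*$, so the error terms $K^2_{X_d}B_d$, $K_{X_d}B_d^2$ and $2(2kd^2+1)$ are all $o(d^{2m})$ and vanish in the limit $d\to\infty$. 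Without the covering step the additive constants never disappear and you cannot reach the clean coefficient-only inequality. Two smaller points: the Albanese fiber $F$ is \emph{contracted} by the Albanese map, not mapped nontrivially into $\mathrm{Alb}(X)$, so your justification for $g\ge 1$ is wrong (the correct reason $g\ge 2$ is that $X$ is of general type); and the genus-$\le 5$ threshold for the $6\chi$ bound does not come from a uniform Clifford slope of $3$ on $\omega_X|_F$ but from a separate argument (Theorem \ref{castelnuovoirr} feeding into Theorem \ref{essential}) for free hyperelliptic pencils of genus $g\ge 6$, where Clifford gives $r_i\le 3<\tfrac13(2g-2)$, together with a non-hyperelliptic-pencil argument for $X_d$ that uses the factorization of the Albanese map through the canonical map.
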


This theorem, combining with the Miyaoka-Yau inequality $K^3_X \le 72 \chi(\omega_X)$, will give the explicit geography of Gorenstein irregular 3-folds of Albanese dimension $\ge 2$.

Along this line, one can naturally consider the following much finer conjecture of Noether-Castelnuovo type when
$X$ has Albanese dimension one:
\begin{conj} \label{nc}
Let $X$ be an irregular minimal Gorenstein 3-fold of general type with Albanese dimension one. Then
$$
K^3_X \ge 2 \chi(\omega_X).
$$
If the Albanese fiber has large volume,
then
$$
K^3_X \ge 3 \chi(\omega_X).
$$
\end{conj}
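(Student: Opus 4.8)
The plan is to exploit the fibration structure forced by Albanese dimension one. Since $h^1(\CO_X)=q>0$ and $\dim a(X)=1$, after Stein factorization the Albanese map induces a fibration $f\colon X\to B$ onto a smooth projective curve $B$ of genus $g=q\ge 1$ whose general fibre $F$ is a smooth surface. By easy addition $\kappa(F)\ge\kappa(X)-1=2$, and since $K_X|_F=K_F$ is the restriction of a nef divisor it is nef, so $F$ is a minimal surface of general type. Writing $\omega_{X/B}=\omega_X\otimes f^*\omega_B^{-1}$, I would work throughout with the relative canonical divisor $K_{X/B}=K_X-f^*K_B$ and with the sheaves $f_*\omega_{X/B}$ and $R^1f_*\omega_{X/B}$ on $B$; by the semipositivity theorems of Fujita and Koll\'ar these are nef, so their degrees $d_0:=\deg f_*\omega_{X/B}$ and $d_1:=\deg R^1f_*\omega_{X/B}$ satisfy $d_0,d_1\ge 0$.

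First I would record the two numerical identities that convert the problem into a slope estimate. Since $f^*K_B\equiv(2g-2)F$ and $F^2=0$ as a cycle, expanding $K_X=K_{X/B}+(2g-2)F$ gives
\[
K_X^3=K_{X/B}^3+6(g-1)K_F^2 ,
\]
where $K_F^2=(K_X|_F)^2$ is the canonical volume of the fibre. On the other hand, relative duality identifies $R^if_*\CO_X$ with $(R^{2-i}f_*\omega_{X/B})^\vee$, and computing Euler characteristics of the resulting bundles on $B$ yields
\[
\chi(\omega_X)=(g-1)\chi(\omega_F)+d_0-d_1 .
\]
Combining these and discarding the harmless term $2d_1\ge 0$, the inequality $K_X^3\ge 2\chi(\omega_X)$ would follow from the \emph{relative slope inequality} $K_{X/B}^3\ge 2\,d_0$ together with the nonnegativity of the base correction $2(g-1)\bigl(3K_F^2-\chi(\omega_F)\bigr)$; the latter holds for every $g\ge 1$ because the Noether inequality $K_F^2\ge 2\chi(\omega_F)-6$, combined with $K_F^2\ge 1$, forces $3K_F^2\ge\chi(\omega_F)$. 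The finer bound $K_X^3\ge 3\chi(\omega_X)$ is obtained identically from the sharper slope $K_{X/B}^3\ge 3\,d_0$; now the correction $6K_F^2-3\chi(\omega_F)$ is only nonnegative once $K_F^2$ is bounded below, which is precisely the role of the hypothesis that the Albanese fibre have large volume. Thus the constants $2$ and $3$ are exactly the threefold shadows of the surface Noether and Castelnuovo constants in the excerpt.

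The heart of the matter is therefore the slope inequality for the Gorenstein threefold fibration $f$. I would attack it by Xiao's method adapted to surface fibres: take the Harder--Narasimhan filtration $0=E_0\subset E_1\subset\cdots\subset E_\ell=f_*\omega_{X/B}$ with slopes $\mu_1>\cdots>\mu_\ell\ge 0$, and for each step pull back $E_i$ to produce a moving sub-linear-system of $|K_{X/B}|$ whose restriction to a general fibre $F$ is a subsystem of $|K_F|$. One then bounds the contribution of each graded piece to $K_{X/B}^3$ by a Noether-type estimate on $F$ — a subsystem of $|K_F|$ of projective dimension $r$ with two-dimensional image contributes roughly $2r$ to $K_F^2$ — and sums the telescoping contributions $\sum_i(\mu_i-\mu_{i+1})(\cdots)$ to obtain a bound of the shape $K_{X/B}^3\ge c\,d_0$. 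Under the large-volume hypothesis the fibre is far from the Noether line and its canonical system is sufficiently positive, so the per-step Noether constant improves; this is what should upgrade $c=2$ to $c=3$, exactly as Horikawa's non-hyperelliptic condition upgrades Noether to Castelnuovo for surfaces.

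The main obstacle, and the reason the statement is only conjectural, is securing the \emph{optimal} constant $c=2$ (respectively $c=3$) in the slope inequality. The existing threefold slope inequalities (Barja, Ohno, Barja--Stoppino) fall short of the value $2$ because the surface Noether inequality degrades in families: the relative canonical system may acquire base loci along special fibres, the Gorenstein fibres may be singular, and the graded pieces of the Harder--Narasimhan filtration need not separate the canonical image of $F$ cleanly. Controlling these phenomena uniformly — equivalently, proving a relative version of the surface Noether inequality for families of Gorenstein surfaces with the \emph{sharp} constant, and its Castelnuovo refinement under a large-volume assumption — is the key difficulty, and I expect it to require new input beyond the slope techniques used to establish Theorem \ref{3fold}.
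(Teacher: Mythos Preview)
The statement you are attempting is a \emph{conjecture}; the paper does not prove it in full. What the paper does prove is the partial result Theorem~\ref{alb1}: $K_X^3\ge 2\chi(\omega_X)$ holds except for the two fibre types $(p_g(F),K_F^2)\in\{(2,1),(3,2)\}$, and $K_X^3\ge 3\chi(\omega_X)$ holds once $K_F^2\ge 12$. So there is no ``paper's own proof'' of Conjecture~\ref{nc} to compare against, only a proof of this near-miss.

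Your reduction is correct and your numerical identities $K_X^3=K_{X/B}^3+6(g-1)K_F^2$ and $\chi(\omega_X)=(g-1)\chi(\omega_F)+d_0-d_1$ are fine, as is the observation that the base correction terms are controlled by Noether on $F$. But your plan of attack on the remaining slope inequality $K_{X/B}^3\ge 2d_0$ via Xiao's Harder--Narasimhan method is \emph{not} what the paper does, and the paper explicitly notes that the Ohno--Barja slope inequalities obtained that way fall short. Instead, the paper bypasses $d_0=\deg f_*\omega_{X/B}$ altogether and works with $h^0(\omega_{X/Y})$: it proves a \emph{relative Noether inequality} (Theorem~\ref{relativenoether3fold1})
\[
h^0(\omega_{X/Y})\le\Bigl(\tfrac14+\tfrac{1}{K_F^2}\Bigr)\omega_{X/Y}^3+\tfrac{K_F^2+4}{2}
\]
using the iterative decomposition of the linear system $|\omega_{X/Y}|$ (Theorem~\ref{decomposition}) together with the surface Noether bounds of \S4, then combines this with $h^0(\omega_{X/Y})\ge \chi(\omega_X)-(\chi(\mathcal O_F)+p_g(F))(g(Y)-1)$ coming from semipositivity, and finally kills all additive constants by the \'etale base-change trick of Remark~\ref{observ}. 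This already yields $K_X^3\ge 2\chi(\omega_X)$ whenever $K_F^2\ge 4$; the remaining cases $K_F^2\le 3$ are handled one by one (Propositions~\ref{pg0}--\ref{pg11}) by direct analysis of $|\omega_{X/Y}|$ or $|2K_X|$.

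The upshot is that your sketch correctly locates the difficulty but proposes the harder route. The paper's relative-Noether-plus-\'etale-cover approach extracts strictly more: it proves the conjectured bound outside two explicit fibre types, whereas the sharp slope constant you would need remains open. If you want to push further toward the full conjecture, the paper's method suggests that the residual problem is not a general slope inequality but rather the two specific fibrations with $(p_g(F),K_F^2)=(2,1)$ and $(3,2)$.
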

Here we consider the volume of the fiber instead of its geometric genus simply because
the volume is always positive for a surface of general type, but the geometric genus is not.

One should note that the above conjecture is the most suitable generalization by comparing the coefficients before $\chi(\omega_X)$ to those in the surface case or in Theorem \ref{3fold}.

By the sharp inequality $K^3_X \ge \frac 23(2p_g(X)-5)$ \cite{CaMChZ} in the smooth case, Conjecture \ref{nc} seems to be too optimistic. But surprisingly, the following theorem shows that the above conjecture is at most \textit{only a little} away from being true.

\begin{theorem} \label{alb1}
Let $X$ be an irregular minimal Gorenstein 3-fold of general type with Albanese dimension one. Let
$f: X \to Y$ be the induced Albanese fibration with a smooth general fiber $F$. Then
$$
K^3_X \ge 2 \chi(\omega_X)
$$
unless one of the following holds:
\begin{itemize}
\item $p_g(F)=2$ and $K^2_F = 1$. In this case, $K^3_X \ge \frac 43 \chi(\omega_X)$.
\item $p_g(F)=3$ and $K^2_F = 2$. In this case, $K^3_X \ge \frac {12}{7} \chi(\omega_X)$.
\end{itemize}

If $K^2_F \ge 12$, then
$$
K^3_X \ge 3 \chi(\omega_X).
$$
\end{theorem}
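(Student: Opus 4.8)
The plan is to analyze $X$ through its Albanese fibration and reduce the stated bound to a relative estimate over the base curve together with an estimate on a single fibre. Since $X$ has Albanese dimension one, Stein factorization of the Albanese map produces a fibration $f : X \to Y$ onto a smooth curve $Y$ of genus $g \ge 1$ with connected general fibre $F$. Because $K_X$ is nef and $\mathcal{O}_F(F) \cong \mathcal{O}_F$, adjunction gives $K_F = K_X|_F$ nef, so $F$ is a minimal surface of general type with $K_F^2 = K_X^2 \cdot F$; this $F$ is exactly the fibre in the statement.

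Next I would separate the invariants into a relative part over $Y$ and a part living on a single fibre. Writing $K_{X/Y} = K_X - f^{*}K_Y$ and using $F^2 \equiv 0$ and $F^3 = 0$, one obtains the identity
$$K_X^3 = K_{X/Y}^3 + 6(g-1)K_F^2.$$
Applying relative duality (so that $R^2 f_{*}\omega_{X/Y} \cong \mathcal{O}_Y$) together with Riemann--Roch on $Y$ to the sheaves $R^i f_{*}\omega_{X/Y}$ gives
$$\chi(\omega_X) = \chi_f + (g-1)\chi(\omega_F),$$
where $\chi_f := \deg f_{*}\omega_{X/Y} - \deg R^1 f_{*}\omega_{X/Y}$; by the nefness of $f_{*}\omega_{X/Y}$ and the positivity of higher direct images one has $\chi_f \le \deg f_{*}\omega_{X/Y}$.

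Because $g - 1 \ge 0$, the target bound $K_X^3 \ge c\,\chi(\omega_X)$ now reduces to two independent estimates: a fibre inequality $6K_F^2 \ge c\,\chi(\omega_F)$, and a relative slope inequality $K_{X/Y}^3 \ge c\,\deg f_{*}\omega_{X/Y}$ (the latter combined with $\chi_f \le \deg f_{*}\omega_{X/Y}$). The fibre inequality is elementary: Noether's inequality $K_F^2 \ge 2p_g(F)-4$ for minimal surfaces of general type yields $\chi(\omega_F) \le (K_F^2+6)/2$, whence a direct check gives $6K_F^2 \ge 2\chi(\omega_F)$ always, and $6K_F^2 \ge 3\chi(\omega_F)$ as soon as $K_F^2 \ge 2$. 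Thus the fibre side never obstructs the constants $c = 2$ or (for $K_F^2 \ge 12$) $c = 3$.

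The crux, and the step I expect to be the main obstacle, is the relative slope inequality, i.e. bounding the slope $\lambda(F) := K_{X/Y}^3 / \deg f_{*}\omega_{X/Y}$ from below in terms of $p_g(F)$ and $K_F^2$. Here I would adapt Xiao's slope method to the threefold fibration: take the Harder--Narasimhan filtration $0 = V_0 \subset V_1 \subset \cdots \subset V_n = f_{*}\omega_{X/Y}$ with slopes $\mu_1 > \cdots > \mu_n \ge 0$, resolve the relative canonical map so that each $V_i$ defines a nef moving divisor on a model of $X$, and estimate $K_{X/Y}^3$ from below by a weighted sum of the $\mu_i$ whose coefficients are controlled by $K_F^2$ and by the canonical image of $F$. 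Comparing with $\deg f_{*}\omega_{X/Y} = \sum_i r_i(\mu_i - \mu_{i+1})$ should give $\lambda(F) \ge 2$ in general and $\lambda(F) \ge 3$ once $K_F^2 \ge 12$. The delicate point is the two extremal fibre types $(p_g(F),K_F^2) = (2,1)$ and $(3,2)$: these are minimal-volume Horikawa-type surfaces whose canonical map is far from birational (a pencil, resp. a degree-two map), so the moving part captures too little of $K_{X/Y}^3$ and the same computation only yields $\lambda \ge 4/3$, resp. $\lambda \ge 12/7$. Detecting exactly these two exceptions---rather than merely invoking Noether's inequality on $F$---requires the classification of surfaces of small $K^2$ near the Noether line, and is where the real work lies. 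Assembling the relative and fibre estimates with $c = \min(\lambda(F),2)$, and with $c = 3$ when $K_F^2 \ge 12$, then gives the theorem.
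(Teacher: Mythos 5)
Your overall architecture --- split $K_X^3=K_{X/Y}^3+6(g-1)K_F^2$ and $\chi(\omega_X)=\chi_f+(g-1)\chi(\omega_F)$, then handle the fibre term by Noether's inequality on $F$ and the relative term by a slope-type bound --- matches the paper's in spirit, and your fibre-side check is fine. But the step you yourself identify as ``the crux'' is a genuine gap, not a routine adaptation. The inequality $K_{X/Y}^3\ge 2\deg f_*\omega_{X/Y}$ (let alone $\ge 3\deg f_*\omega_{X/Y}$ for $K_F^2\ge 12$) is not a known output of Xiao's Harder--Narasimhan method for fibred threefolds: the bounds of Ohno and Barja obtained that way are weaker and depend on $p_g(F)$ and on the behaviour of the relative canonical map, and the paper explicitly does \emph{not} go this route. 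Instead it proves a relative Noether inequality $h^0(\omega_{X/Y})\le\bigl(\tfrac14+\tfrac{1}{K_F^2}\bigr)\omega_{X/Y}^3+\tfrac{K_F^2+4}{2}$ by an iterated ``subtract a fibre and restrict'' decomposition (Theorem \ref{decomposition}, Proposition \ref{algcase1}, Lemma \ref{algsumai}) combined with Noether/Castelnuovo bounds for linear series on the fibre surface. Note that this machinery only yields an asymptotic slope of about $\tfrac{4K_F^2}{K_F^2+4}$, which is $\ge 2$ only when $K_F^2\ge 4$; so your claim that the slope method gives $\lambda(F)\ge 2$ ``in general'' with exceptions only at $(p_g,K_F^2)=(2,1),(3,2)$ cannot be right --- \emph{all} of $K_F^2\le 3$ falls outside the reach of the relative estimate and must be treated separately, as the paper does in Propositions \ref{pg0}, \ref{pg23}, \ref{pg32}, \ref{pg21}, \ref{pg1}, \ref{pg11} using global arguments (factorization of the canonical or bicanonical map of $X$ through $f$, $2$-connectedness of canonical divisors, and, for $(2,1)$, Hu's theorem $K_X^3\ge\tfrac43\chi(\omega_X)-2$). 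Your sketch supplies none of these.

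A second missing ingredient is the mechanism for removing additive constants. Any relative Noether or slope inequality obtained by these methods carries an error term (the $\tfrac{K_F^2+4}{2}$ above, or the $-6$ in $\omega_{X/Y}^3\ge 2h^0(\omega_{X/Y})-6$), and your reduction as written needs the inequality with \emph{no} error to conclude $K_X^3\ge c\,\chi(\omega_X)$ exactly. The paper disposes of these constants via Remark \ref{observ}: since $g(Y)=h^1(\mathcal O_X)>0$, one pulls back along degree-$d$ \'etale covers of $Y$, under which $K_X^3$ and $\chi(\omega_X)$ both scale by $d$ while the constant does not, and lets $d\to\infty$. Without this (or an equivalent device) even a correct version of your relative estimate would only give $K_X^3\ge c\,\chi(\omega_X)-b$.
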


Recall that in the surface case, we have $K^2_S \ge 3\chi(\omega_S)$ if the Albanese fiber is nonhyperelliptic. Here, we also study the irregular 3-folds whose Albanese fiber has no hyperelliptic pencil. We have the following result.

\begin{theorem} \label{alb12}
Notations as in Theorem \ref{alb1}. Suppose $F$ has no hyperelliptic pencil. Then
$$
K^3_X \ge 3 \chi(\omega_X).
$$
provided that $K^2_F \ge 9$.
\end{theorem}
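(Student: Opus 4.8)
The plan is to exhibit the Albanese map as a fibration $f : X \to Y$ over a smooth curve $Y$ of genus $b = q(Y) \ge 1$ whose general fiber is the surface $F$, and to compare $K_X^3$ with $\chi(\omega_X)$ through this fibration. Because $X$ is minimal, the triviality of the normal bundle of a fiber gives $K_F = K_X|_F$ by adjunction, so $F$ is a minimal surface of general type. Setting $K_{X/Y} = K_X - f^*K_Y$ and using $(f^*K_Y)^2 = 0$ one obtains
\[
K_X^3 = K_{X/Y}^3 + 6(b-1)K_F^2,
\]
while the Leray spectral sequence together with relative duality ($R^2 f_*\omega_{X/Y} \cong \mathcal O_Y$) yields
\[
\chi(\omega_X) = \deg f_*\omega_{X/Y} - \deg R^1 f_*\omega_{X/Y} + (b-1)\chi(\mathcal O_F).
\]
By the semipositivity of the higher direct images (Fujita, Kawamata, Koll\'ar) one has $\deg R^1 f_*\omega_{X/Y} \ge 0$, and the fiber Noether inequality $K_F^2 \ge 2\chi(\mathcal O_F) - 6$, together with the hypothesis $K_F^2 \ge 9$, gives $2K_F^2 \ge \chi(\mathcal O_F)$; hence the $(b-1)$ terms are harmless and everything reduces to the relative slope inequality $K_{X/Y}^3 \ge 3\deg f_*\omega_{X/Y}$.

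To prove this slope inequality I would run Xiao's method in the fibered-threefold setting. Choose a filtration $0 = \mathcal E_0 \subset \mathcal E_1 \subset \cdots \subset \mathcal E_N = f_*\omega_{X/Y}$ refining the Harder--Narasimhan filtration, with successive slopes $\mu_1 \ge \cdots \ge \mu_N \ge 0$, the last inequality coming from the nefness of $f_*\omega_{X/Y}$. Restricting $\mathcal E_i$ to a general fiber gives a flag of subspaces $V_i \subseteq H^0(F, \omega_F)$ with $\dim V_i = \operatorname{rank}\mathcal E_i =: r_i$, and each $V_i$ spans a sub-linear-system of $|K_F|$ whose moving part $M_i$ has self-intersection $d_i := M_i^2$. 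Xiao's inequality then bounds $K_{X/Y}^3$ from below by a weighted sum of the $d_i$ against the slope gaps $\mu_i - \mu_{i+1}$, whereas $\deg f_*\omega_{X/Y} = \sum_i r_i(\mu_i - \mu_{i+1})$; comparing the two weighted sums term by term reduces the whole statement to a two-dimensional estimate of the shape $d_i \ge 3 r_i - c$ on the fiber $F$.

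The heart of the matter, and the step I expect to be the main obstacle, is precisely this two-dimensional estimate. It is a Castelnuovo-type inequality for the surface $F$: for a minimal surface of general type with no hyperelliptic pencil one aims at $K_F^2 \ge 3p_g(F) - 7$ together with the local bounds $K_F \cdot M_i \ge 3(r_i - 1)$ for the moving parts of the subsystems. The no-hyperelliptic-pencil hypothesis is exactly what upgrades the underlying Clifford-type bound from coefficient $2$ --- which would only yield the $2\chi(\omega_X)$ of Conjecture \ref{nc} --- to coefficient $3$, since a hyperelliptic pencil is the degeneration that makes the slope drop. I would establish these by analyzing the canonical and sub-canonical systems of $F$, carefully separating their moving and fixed parts and invoking surface Clifford/Castelnuovo theory; the hypothesis $K_F^2 \ge 9$ enters here to dispose of the surfaces of small invariants, where the canonical image may fail to be generically finite or where the correction constant $c$ above would otherwise destroy the clean coefficient $3$.

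Finally, assembling the pieces gives $K_{X/Y}^3 \ge 3\deg f_*\omega_{X/Y}$, and combined with $\deg R^1 f_*\omega_{X/Y} \ge 0$ and the identity for $K_X^3$ this produces $K_X^3 \ge 3\chi(\omega_X)$. Two technical points require care within Xiao's framework: one may need a semistable reduction or base change to control the filtration and the non-nef locus, and one must track the fixed part of the relative canonical map, which can a priori lower $K_{X/Y}^3$; both are handled by the standard modifications, using again the minimality of $X$.
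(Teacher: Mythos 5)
Your reduction framework is sound and runs parallel to the paper's: the identities $K_X^3=\omega_{X/Y}^3+6(g(Y)-1)K_F^2$ and $\chi(\omega_X)=\deg f_*\omega_{X/Y}-\deg R^1f_*\omega_{X/Y}+(g(Y)-1)\chi(\mathcal O_F)$, Koll\'ar semipositivity, and the Noether inequality on $F$ do make the $(g(Y)-1)$ terms harmless, exactly as in Proposition \ref{rough3}. The divergence is in the technical core. The paper does \emph{not} prove the clean slope inequality $\omega_{X/Y}^3\ge 3\deg f_*\omega_{X/Y}$. It proves the relative Noether inequality of Theorem \ref{relativenoether3fold2}, bounding $h^0(\omega_{X/Y})$ by $\left(\frac 16+\frac{3}{2K_F^2}\right)\omega_{X/Y}^3$ plus the explicit additive constant $\frac{K_F^2+7}{3}$, via the fiber-subtraction decomposition of Theorem \ref{decomposition} rather than the Harder--Narasimhan filtration; the leftover constant is then removed by \'etale base change over $Y$ (Remark \ref{observ}), which is available because $g(Y)=h^1(\mathcal O_X)\ge 1$. (The case $p_g(F)=0$ is dispatched separately by Proposition \ref{pg0}.)

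The gap in your proposal sits precisely at the step you treat as routine. In Xiao's method the fiberwise estimates $d_i\ge 3r_i-c$ (here $c=7$, from Castelnuovo) do \emph{not} compare term by term with $\deg f_*\omega_{X/Y}=\sum_i r_i(\mu_i-\mu_{i+1})$: the constants are weighted by the slope gaps and accumulate to an error of order $c\,\mu_1$, where $\mu_1$ is the top Harder--Narasimhan slope --- a quantity you do not control and which is in general comparable to $\deg f_*\omega_{X/Y}$ itself, not an absolute constant. This is exactly why the slope bounds of Ohno and Barja for fibred $3$-folds carry weaker coefficients or long exceptional lists, and why the paper replaces Xiao's method by the $h^0$-counting scheme, in which the total error is $\frac 32\sum_i a_i$, bounded by $\frac{3\omega_{X/Y}^3}{2d_0}+O(1)$ through Lemma \ref{algsumai} and hence absorbed into the coefficient $\frac{3}{2K_F^2}$ (this is where the hypothesis $K_F^2\ge 9$ is actually used, turning $\frac 16+\frac{3}{2K_F^2}$ into $\frac 13$). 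Even if you settled for a slope inequality with an additive constant, you would still need the covering trick of Remark \ref{observ} to delete it, a step your write-up never invokes. As it stands, the assertion $\omega_{X/Y}^3\ge 3\deg f_*\omega_{X/Y}$ is unestablished and carries the entire content of the theorem; you would also need to handle the degenerate subsystems where $r_0=r_1$ (the analogue of Cases 1--3 in the proof of Theorem \ref{relativenoether3fold2} and of Lemma \ref{est}), which your sketch does not address.
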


\begin{remark}
The geography of non-Gorenstein 3-folds of general type is a very subtle topic. In particular,
$\chi(\omega_X)$ can be zero or even negative. For example, there do exist examples of non-Gorenstein
3-folds of maximal Albanese dimension with $\chi(\omega_X)=0$ (see \cite{EL}). In \cite{CH}, Chen and Hacon have constructed
a family of non-Gorenstein 3-folds of general type with $\chi(\omega_X)$ negative. In their paper, they obtained a similar
type of inequality
$$
K^3_X \ge c \chi(\omega_X)
$$
but $c<0$.
One can also construct families of non-Gorenstein 3-folds of general type with Albanese dimension two and $\chi(\omega_X)<0$ (see Example \ref{example}).
In these cases, Theorem \ref{3fold}
holds trivially. We would like to point out that in \cite{CC}, Chen and Chen have proved that there exists an explicit effective lower bound for ${\Vol} (X)$.
\end{remark}

Let us sketch the proofs of the above theorems. If $X$ has Albanese dimension two, Pardini's method \cite{Pa} on \'etale covering and limiting can be applied here, provided one has a good slope inequality for fibered 3-folds over surfaces, which is not known yet. In this paper, to overcome this difficulty, we prove the relative Noether inequalities such as Theorem \ref{relnoether1}, \ref{relnoether2}, and \ref{essential} for fibered 3-folds over surfaces. These inequalities are about $K^3_X$ and $h^0(K_X)$ up to some explicit error terms. Then by the generic vanishing theorem due to Green and Lazarsfeld \cite{GL}, we know that $\chi(\omega_X)$ is bounded from above by $h^0(K_X)$ up to \'etale covering. Finally, by Pardini's limiting trick, we can prove Theorem \ref{3fold}.

If the Albanese dimension of $X$ is one, we still have the \'etale covering method by Bombieri and Horikawa \cite{Bo, Ho}. But the generic vanishing does not help in this case. Alternatively, we will prove the following relative Noether inequalities, Theorem \ref{relativenoether3fold1} and \ref{relativenoether3fold2}, for fibered 3-folds over curves. They will imply Theorem \ref{alb1} and \ref{alb12} via the above covering method if the volume of the Albanese fiber $\ge 4$. Note that the slope inequality for fibered 3-folds over curves has been studied by Ohno \cite{Oh} and Barja \cite{Ba1}. Finally, we will prove case by case when the volume of the Albanese fiber $\le 3$.

This paper is organized as follows: In Section 3, we prove several basic results for fibered 3-folds. In Section 4, we list several results about linear systems on algebraic surfaces. In Section 5, we prove the relative Noether inequalities for fibered 3-folds over surfaces. In Section 6, we will prove Theorem \ref{3fold}. In Section 7 and 8, we consider the case of Albanese dimension one and prove Theorem \ref{alb1} and \ref{alb12}.

After finishing the paper, the author was told by Jungkai Chen that in a very recent paper \cite{CC1} joint with Meng Chen, they proved that $K^3_X \ge \frac 23 (2h^0(K_X)-5)$ still holds in the Gorenstein case. In another very recent paper, Hu \cite{Hu} showed that $K^3_X \ge \frac 43 \chi(\omega_X)-2$ in this case. Also, the author has been informed by Miguel Barja that in \cite[Remark 4.6]{Ba}, the first inequality of Theorem \ref{3fold} is also independently proved by him using a different method.

\smallskip \smallskip
\textbf{Acknowledgement}. The author is very grateful to Miguel Barja, Jungkai Chen, Meng Chen, Xi Chen, Zhi Jiang and Xinyi Yuan for their interests and valuable discussions related to this paper. He has benefited a lot from several discussions with Steven Lu about and beyond this subject since December 2012.
The author thanks Chenyang Xu for his hospitality when the author visited BICMR. Special thanks also goes to ECNU, NUS, USTC for their hospitality.

The work is supported by an NSERC Discovery Grant.

\section{Notations}
The following notations will be frequently used in this paper:

Let $X$ be a projective variety and $L$ be a line bundle on $X$ such that $h^0(L) \ge 2$.
We denote by
$
\phi_{L}: X \dashrightarrow \mathbb P^{h^0(L)-1}
$
the rational map induced by the complete linear series $|L|$. We say that $\phi_L$ is \textit{generically finite},
if $\dim \phi_L(X)=\dim X$. Otherwise, we say $|L|$ is \textit{composed with a pencil}.

A $\mathbb Q$-Weil divisor $D$ on a variety $X$ of dimension $n$ is called \textit{pseudo-effective}, if for any nef line bundles $A_1, \cdots, A_{n-1}$ on $X$, we have
$$
A_1 \cdots A_{n-1} D \ge 0.
$$
Such divisors can be characterized as the limit of effective divisors.

Let $\alpha: X \to A$ be a morphism from $X$ to an abelian variety $A$. Denote by $\mu_d: A \to A$ the multiplicative map of $A$ by $d$.
We have the following diagram:
$$
\xymatrix{X_d \ar[r]^{\phi_d} \ar[d]_{\alpha_d} & X \ar[d]^{\alpha} \\
A \ar[r]^{\mu_d} & A}
$$
Here $X_d=X \times_{\mu_d} A$ is the fiber product. We call $X_d$ \textit{the $d$-th lifting of $X$ by $\alpha$}. In particular, if $\alpha$ is the Albanese map
of $X$, then we call $X_d$ \textit{the $d$-th Albanese lifting of $X$}.
This construction has been used by Pardini in \cite{Pa}.

In this paper, a \textit{fibration} $f: X \to Y$ always means a surjective morphism with connected fibers.

\section{Preliminaries}
Let $X$ be a projective $n$-fold. Here we assume that $n=2, 3$.
Let $f: X \to Y$ be a fibration from $X$ to a smooth projective curve $Y$ with a smooth general fiber $F$.
Then for any nef line bundle $L$ on $X$, we can find a unique integer $e_L$ such that
\begin{itemize}
\item $L-e_LF$ is not nef;
\item $L-eF$ is nef for any integer $e<e_L$.
\end{itemize}
We call this number the \textit{minimum of $L$ with respect to $F$}. In particular, $e_L>0$. Another important fact is that, if $h^0(L-e_LF)>0$, then $|L-e_LF|$ has horizontal base locus (c.f. \cite[\S 2]{Zh}). We have the following theorem.

\begin{theorem} \label{decomposition}
With the above notations. Let $L$ be a nef and effective line bundle on $X$.
Then we have the following quadruples
$$
\{(X_i, L_i, Z_i, a_i), \quad i=0, 1, \cdots, N\}
$$
with the following properties:
\begin{enumerate}
\item $(X_0, L_0, Z_0, a_0)=(X, L, 0, e_L)$.
\item For any $i=0, \cdots, N-1$, $\pi_i: X_{i+1} \to X_i$ is a composition of blow-ups of $X_i$ such that the proper transform of the movable part of $|L_i-a_iF_i|$ is base point free. Here $F_0=F$, $F_{i+1}=\pi^*_{i} F_{i}$ and $a_i=e_{L_i}$ is the minimum of $L_i$ with respect to $F_i$. Moreover, we have the decomposition
$$
|\pi^*_i(L_i-a_iF_i)|= |L_{i+1}| + Z_{i+1}
$$
such that $|L_{i+1}|$ is base point free and $Z_{i+1}|_{F_{i+1}} > 0$.
\item We have $h^0(L_0) > h^0(L_1) > \cdots > h^0(L_N)>h^0(L_N-a_NF_N)=0$. Here $a_N=e_{L_N}$.
\end{enumerate}
\end{theorem}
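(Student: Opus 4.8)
The plan is to build the quadruples by a descending recursion that, at each stage, strips off the minimal fibre multiple and then splits the resulting linear system into its movable and fixed parts. I would first check that the recursion is well posed. Put $(X_0,L_0,Z_0,a_0)=(X,L,0,e_L)$; given $(X_i,L_i)$ with $L_i$ nef and effective, the integer $a_i=e_{L_i}$ is defined as in the paragraph preceding the statement, and $a_i>0$. I then test $h^0(L_i-a_iF_i)$. If it vanishes I stop and set $N=i$, which is exactly the terminal condition in (3). Otherwise $h^0(L_i-a_iF_i)>0$ and, by Hironaka's resolution of the base locus in characteristic $0$, there is a composition of blow-ups $\pi_i\colon X_{i+1}\to X_i$ after which the proper transform of the movable part of $|L_i-a_iF_i|$ is base point free; writing that movable part as $L_{i+1}$ and the remaining fixed part as $Z_{i+1}$ yields the decomposition in (2). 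As $L_{i+1}$ is base point free it is again nef and effective, so the data $(X_{i+1},L_{i+1},Z_{i+1},a_{i+1})$ feed back into the recursion.

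The first substantive claim is the horizontality $Z_{i+1}|_{F_{i+1}}>0$. I would derive it from the fact recorded before the statement: since $L_i$ is nef and $L_i-a_iF_i$ is not nef, any curve $C$ with $(L_i-a_iF_i)\cdot C<0$ must satisfy $F_i\cdot C>0$, so $C$ is horizontal, and hence $|L_i-a_iF_i|$ has horizontal base locus (c.f.\ \cite[\S 2]{Zh}). When $n=2$ such a $C$ is literally a fixed curve of the system and already appears in $Z_{i+1}$; when $n=3$ the horizontal part of the base locus may have codimension two, but then blowing it up in the course of $\pi_i$ produces a horizontal exceptional divisor which, being rigid, is forced into $Z_{i+1}$. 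In either case $Z_{i+1}$ contains a divisor dominating $Y$, so it meets the general fibre $F_{i+1}$ in a nonzero effective divisor, which is the assertion.

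The second claim is the strict decrease $h^0(L_i)>h^0(L_{i+1})$ in (3), which simultaneously forces termination. Because $\pi_i$ is a composition of blow-ups and the fixed part carries no moduli, $h^0(L_{i+1})=h^0(\pi_i^*(L_i-a_iF_i))=h^0(L_i-a_iF_i)$, so it is enough to see $h^0(L_i-a_iF_i)<h^0(L_i)$. Here I would push forward to the base curve: with $f_i\colon X_i\to Y$ the induced fibration and $F_i=f_i^*p$, one has $h^0(L_i-a_iF_i)=h^0\big(Y,\,(f_i)_*L_i\otimes\mathcal O_Y(-a_ip)\big)$, and since $F_i$ is a \emph{general} fibre and $a_i\ge 1$, a nonzero section of $(f_i)_*L_i$ cannot vanish at the general point $p$; thus $h^0((f_i)_*L_i(-a_ip))<h^0((f_i)_*L_i)=h^0(L_i)$. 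Consequently $\{h^0(L_i)\}$ is a strictly decreasing sequence of positive integers, so the recursion reaches $h^0(L_N-a_NF_N)=0$ after finitely many steps and the full chain in (3) holds.

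The step I expect to be the main obstacle is the threefold case of the horizontality claim. On a surface the curve witnessing non-nefness is itself a fixed divisor, but on a threefold the horizontal base locus can drop to codimension two, and one must argue through the explicit blow-up sequence $\pi_i$ that its contribution emerges as a genuine horizontal component of the fixed \emph{divisor} $Z_{i+1}$ rather than being absorbed into the base-point-free movable part $L_{i+1}$. Keeping track of the multiplicities of the system along the successive (possibly horizontal) blow-up centres is the delicate bookkeeping; once that is in place, the remaining assertions are the routine ``subtract the minimal fibre multiple, split off the base locus, repeat'' induction together with the direct-image computation above.
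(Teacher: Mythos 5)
Your construction is correct and is essentially the argument the paper relies on: the paper's own ``proof'' is only a citation to \cite[\S 2]{Zh}, and the recursion you describe --- subtract $a_iF_i$ with $a_i=e_{L_i}$, use nefness of $L_i$ to see that any curve on which $L_i-a_iF_i$ is negative is horizontal and hence lands in the fixed part (either as a horizontal fixed divisor or, in the codimension-two case, as a horizontal exceptional divisor appearing with the positive multiplicity of the system along the centre, which is the real reason it sits in $Z_{i+1}$, rather than ``rigidity''), together with the strict drop $h^0(L_i-F_i)<h^0(L_i)$ coming from generality of the fibre --- is exactly that construction. No genuine gaps.
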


\begin{proof}
See \cite[\S 2]{Zh}.
\end{proof}

\begin{remark} \label{r0r1}
From the above construction, we have
$$
h^0(L_0|_{F_0}) \ge h^0(L_1|_{F_1}) > h^0(L_2|_{F_2}) > \cdots > h^0(L_N|_{F_N}).
$$
In general, we do \textit{not} know if $h^0(L_0|_{F_0}) > h^0(L_1|_{F_1})$. But if $|L_0|_F|$ is base point free, then
$$
h^0(L_0|_{F_0}) > h^0(L_1|_{F_1}).
$$
This fact will be used in the proofs of Theorem \ref{relativenoether3fold1} and \ref{relativenoether3fold2}.
\end{remark}

Write $\rho_{i}=\pi_{0} \circ \cdots \circ \pi_{i-1}: X_i \to X_0$ for $i=1, \cdots, N$.
Fix a nef line bundle $P=P_0$ on $X$ and
denote
$$
L'_i=L_i-a_iF_i, \quad r_i=h^0(L_i|_{F_i}), \quad d_i=(P_i|_{F_i})(L_i|_{F_i})^{n-1},
$$
where $P_i=\rho^*_i P$. It is easy to see that we have
$$
d_0 \ge d_1 \ge \cdots \ge d_N \ge 0.
$$

\begin{prop} \label{algcase1}
For any $j=0, \cdots, N$, we have the following numerical inequalities:
\begin{itemize}
\item[(1)] $\displaystyle{h^0(L_0) \le h^0(L'_j) + \sum_{i=0}^j a_{i} r_{i}}$;
\item[(2)] $\displaystyle{P_0 L^{2}_0 \ge 2 a_0d_0 + \sum_{i=0}^j a_{i}(d_{i-1}+d_i) - 2d_0}$. \quad $(n=3)$
\end{itemize}
\end{prop}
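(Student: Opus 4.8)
The two parts are of different nature -- (1) is cohomological and (2) is intersection-theoretic -- so the plan is to treat them separately, but in both cases by \emph{telescoping along the tower} $X_N \to \cdots \to X_0$ furnished by Theorem \ref{decomposition}, whose defining relation I would write in the form $\pi_i^* L_i = L_{i+1} + Z_{i+1} + a_i F_{i+1}$.

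For (1) the plan is to run the fiber-restriction sequence. Since $F_i$ is a fiber over the curve $Y$, one has $\CO_{X_i}(F_i)|_{F_i} \cong \CO_{F_i}$, so $(L_i - kF_i)|_{F_i} = L_i|_{F_i}$ for every $k$, and the exact sequences $0 \to \CO(L_i-(k+1)F_i) \to \CO(L_i-kF_i) \to L_i|_{F_i} \to 0$ for $k = 0, \dots, a_i-1$ give $h^0(L_i) \le h^0(L_i - a_iF_i) + a_i r_i = h^0(L_i') + a_i r_i$. Because $\pi_i$ is a composition of blow-ups and $Z_{i+1}$ is the fixed part, $h^0(L_i') = h^0(\pi_i^* L_i') = h^0(L_{i+1})$; feeding this back and inducting on $i$ from $0$ to $j$ yields (1). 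This step is routine.

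For (2) I would first pull everything back to $X_{i+1}$ and use $\pi_i^* L_i = L_{i+1} + Z_{i+1} + a_iF_{i+1}$ together with $P_{i+1}F_{i+1}^2 = 0$ and $P_iL_iF_i = d_i$ to record the exact per-step identity
\[
P_iL_i^2 = P_{i+1}L_{i+1}^2 + 2a_id_i + W_{i+1}, \qquad W_{i+1} = 2P_{i+1}L_{i+1}Z_{i+1} + P_{i+1}Z_{i+1}^2 .
\]
The second ingredient is the fiber identity $P_{i+1}Z_{i+1}F_{i+1} = d_i - d_{i+1}$, obtained by intersecting the same relation with $F_{i+1}$ and again killing $F_{i+1}^2$. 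To upgrade the crude estimate $W_{i+1} \ge 0$ into the sharp shape $a_i(d_{i-1}+d_i)$ demanded by the statement, I would exploit the minimality of $a_{i+1} = e_{L_{i+1}}$: since $N_{i+1} := L_{i+1} - (a_{i+1}-1)F_{i+1}$ is nef, substituting $L_{i+1} = (a_{i+1}-1)F_{i+1} + N_{i+1}$ gives
\[
W_{i+1} = 2(a_{i+1}-1)(d_i - d_{i+1}) + 2P_{i+1}N_{i+1}Z_{i+1} + P_{i+1}Z_{i+1}^2 .
\]
Telescoping the identity from $i=0$ to $j$, discarding the terminal $P_{j+1}L_{j+1}^2 \ge 0$ and the nonnegative nef\,$\cdot$\,nef\,$\cdot$\,effective term $P_{i+1}N_{i+1}Z_{i+1}$, and repackaging $2a_id_i = a_i(d_{i-1}+d_i) - a_i(d_{i-1}-d_i)$ through the fiber identity should collapse to the claimed bound, the boundary term $2a_0d_0 - 2d_0$ arising from $Z_0 = 0$ at the bottom of the tower.

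The hard part is the self-intersection $P_{i+1}Z_{i+1}^2$ of the \emph{fixed} part. Unlike $P_{i+1}L_{i+1}Z_{i+1}$ and $P_{i+1}N_{i+1}Z_{i+1}$, which are nonnegative since $L_{i+1}$ and $N_{i+1}$ are nef and $Z_{i+1}$ is effective, the term $P_{i+1}Z_{i+1}^2$ has no a priori sign -- note that $\pi_i^* L_i' = L_{i+1}+Z_{i+1}$ is \emph{not} nef, precisely because $a_i$ is the minimum of $L_i$ with respect to $F_i$ -- and it is exactly what must be controlled to reach the coefficient $a_i$ rather than $a_i-1$: one needs $P_{i+1}Z_{i+1}^2 \ge -(a_{i+1}-1)(d_i - d_{i+1})$, equivalently $P_{i+1}Z_{i+1}\bigl(Z_{i+1} + (a_{i+1}-1)F_{i+1}\bigr) \ge 0$. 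I expect to secure this by restricting to a general (smooth, by Bertini) surface in the base-point-free system $|L_{i+1}|$ and applying a Hodge-index/Zariski-type estimate to the horizontal and vertical parts of $Z_{i+1}$, using the finer properties of the decomposition in Theorem \ref{decomposition}; everything else is bookkeeping with the two identities above.
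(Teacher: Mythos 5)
Your part (1) is correct and is essentially verbatim the paper's argument: restrict to $F_i$, use that $\mathcal O_{X_i}(F_i)|_{F_i}$ is trivial so each step loses at most $r_i$, and use that $Z_{i+1}$ is the fixed part to get $h^0(L_i')=h^0(L_{i+1})$.

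Part (2) has a genuine gap, and you have located it yourself: the term $P_{i+1}Z_{i+1}^2$. Your per-step identity and the fiber identity $P_{i+1}Z_{i+1}F_{i+1}=d_i-d_{i+1}$ are both correct, but the plan then discards the nonnegative cross term $2P_{i+1}N_{i+1}Z_{i+1}$ and asks for the isolated bound $P_{i+1}Z_{i+1}^2\ge -(a_{i+1}-1)(d_i-d_{i+1})$. That bound should not be expected to hold: fixed parts of linear systems typically have very negative self-intersection, and the only estimate nefness actually provides is the \emph{combined} inequality $P_{i+1}Z_{i+1}\bigl(2L_{i+1}'+Z_{i+1}+(a_{i+1}+2)F_{i+1}\bigr)\ge 0$, valid because $2L_{i+1}'+Z_{i+1}+(a_{i+1}+2)F_{i+1}=(L_{i+1}'+F_{i+1})+(\pi_i^*L_i'+F_{i+1})$ is a sum of two nef classes (here $L_{i+1}'=L_{i+1}-a_{i+1}F_{i+1}$, and $\pi_i^*L_i'+F_{i+1}$ is nef by the minimality of $a_i$). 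In other words, $P_{i+1}Z_{i+1}^2$ can only be controlled together with $2P_{i+1}L_{i+1}'Z_{i+1}$; your proposal throws away exactly the compensating term before confronting the square. The suggested repair via restriction to a general $S\in|L_{i+1}|$ and Hodge index does not work either: Hodge index yields \emph{upper} bounds on $(Z_{i+1}|_S)^2$, not lower bounds, and in any case concerns $L_{i+1}Z_{i+1}^2$ rather than $P_{i+1}Z_{i+1}^2$. The paper sidesteps the whole issue by never expanding the square: it factors $(\pi_i^*L_i')^2-(L_{i+1}')^2=(\pi_i^*L_i'-L_{i+1}')(\pi_i^*L_i'+L_{i+1}')$, notes the first factor $a_{i+1}F_{i+1}+Z_{i+1}$ is effective, rewrites the second as $(\pi_i^*L_i'+F_{i+1})+(L_{i+1}'+F_{i+1})-2F_{i+1}$, and intersects with the nef class $P_{i+1}$ to get $a_{i+1}(d_i+d_{i+1})-2(d_i-d_{i+1})$ plus two manifestly nonnegative terms; telescoping together with $P_0L_0^2-P_0L_0'^2=2a_0d_0$ and $P_jL_j'^2\ge -2d_j$ then closes the argument. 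If you keep $2P_{i+1}N_{i+1}Z_{i+1}+P_{i+1}Z_{i+1}^2$ as a single package and bound it by the displayed nefness inequality, your telescoping does go through; as written, the proof is incomplete.
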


\begin{proof}
In \cite[\S 2]{Zh}, (1) is proved. We sketch it here. By the following exact sequence
$$
0 \longrightarrow H^0(L_i-F_i) \longrightarrow H^0(L_i) \longrightarrow H^0(L_i|_{F_i}),
$$
we get
$$
h^0(L_i-F_i) \ge h^0(L_i) - h^0(L_i|_{F_i})=h^0(L_i) - r_i.
$$
By induction and summing over $i=0, \cdots, j$, we can get (1).

For (2), since $\pi^*_iL'_i = a_{i+1}F_{i+1} + L'_{i+1} + Z_{i+1}$, we have the following computation of 1-cycles:
\begin{eqnarray*}
(\pi^*_iL'_i)^{2}-L'^{2}_{i+1} & = & (\pi_i^* L'_i -L'_{i+1}) (\pi_i^* L'_i + L'_{i+1}) \\
& = & a_{i+1}(\pi^*_iL'_i+ L'_{i+1}) F_{i+1} + (\pi^*_iL'_i+ L'_{i+1})Z_{i+1} \\
&=& a_{i+1}(\pi^*_iL'_i+ L'_{i+1}) F_{i+1} + (\pi^*_iL'_i + F_{i+1})Z_{i+1} \\
& & + (L'_{i+1}+F_{i+1}) Z_{i+1}- 2 (\pi^*_i L'_i-L'_{i+1}) F_{i+1}.
\end{eqnarray*}
Note that $\pi^*_iL'_i + F_{i+1}$ and $L'_{i+1}+F_{i+1}$ are both nef. Taking intersections with $P_{i+1}$ for both sides, we can get
$$
P_iL^2_i - P_{i+1}L^2_{i+1} = P_{i+1}(\pi^*_iL_i)^2 - P_{i+1}L^2_{i+1} \ge a_{i+1}(d_i+d_{i+1}) - 2(d_i-d_{i-1}).
$$
Summing over $i=0, \cdots, j-1$, we have
$$
P_0 L'^{2}_0-P_jL'^{2}_j \ge \sum_{i=1}^j a_{i}(d_{i-1}+d_i) - 2(d_0-d_j).
$$
Note that we also have
\begin{eqnarray*}
P_0L^{2}_0-P_0 L'^{2}_0 & = & 2 a_0d_0, \\
P_jL'^{2}_j + 2d_j & \ge & 0.
\end{eqnarray*}
Hence (2) follows.
\end{proof}

We can refine the first inequality of Proposition \ref{algcase1} as follows.

\begin{prop} \label{newr}
Let $H^0(L) \to H^0(L|_F)$ be the restriction map. Denote by $r$ the dimension of its image. Then for any $j=1, \cdots, N$, we have
$$
h^0(L_0) \le h^0(L'_j) + a_0 r + \sum_{i=1}^j a_{i} r_{i}.
$$
\end{prop}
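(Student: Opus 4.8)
The plan is to isolate the single place where Proposition~\ref{newr} improves on Proposition~\ref{algcase1}(1), namely the contribution of the bottom level $i=0$, and to leave every higher level untouched. Recall that the proof of Proposition~\ref{algcase1}(1) rests on the telescoping identity
$$
h^0(L_0)=\sum_{i=0}^{j}\bigl(h^0(L_i)-h^0(L'_i)\bigr)+h^0(L'_j),
$$
where one uses $h^0(L'_i)=h^0(L_{i+1})$ (blow-ups preserve $h^0$ and $L_{i+1}$ is the movable part of $\pi_i^*L'_i$), together with the bound $h^0(L_i)-h^0(L'_i)\le a_i r_i$ obtained by peeling off the fiber $F_i$ one copy at a time through $0\to H^0(L_i-F_i)\to H^0(L_i)\to H^0(L_i|_{F_i})$ and using $(L_i-kF_i)|_{F_i}=L_i|_{F_i}$. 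For the levels $i=1,\dots,j$ I would keep exactly this bound $a_i r_i$. Thus it suffices to replace the level-$0$ estimate $h^0(L_0)-h^0(L'_0)\le a_0 r_0$ by the sharper inequality $h^0(L_0)-h^0(L_0-a_0F)\le a_0 r$, after which the proposition follows by reassembling the telescoping identity with the unchanged higher-level terms.

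To prove this I would pass to the curve $Y$. Let $E=f_*L$, which is locally free near the general point $p\in Y$ with $F=f^{-1}(p)$; using $f^*\mathcal{O}_Y(p)=\mathcal{O}_X(F)$ and the projection formula, one identifies $H^0(L-kF)=H^0(Y,E(-kp))$. Writing $g(k)=h^0(E(-kp))$, each increment $g(k)-g(k+1)$ is the rank of the evaluation (symbol) map $H^0(E(-kp))\to E(-kp)\otimes\kappa(p)$; trivializing $\mathcal{O}_Y(-kp)$ by a local parameter $t$ at $p$, its image becomes a subspace $V_k\subseteq E\otimes\kappa(p)$. For general $p$ the base-change isomorphism $E\otimes\kappa(p)\cong H^0(F,L|_F)$ turns $V_0$ into the image of the restriction $H^0(L)\to H^0(L|_F)$, so $\dim V_0=r$. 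Since $g(0)-g(a_0)=\sum_{k=0}^{a_0-1}\dim V_k$, the desired inequality reduces to the single claim that $\dim V_k\le r$ for every $k$.

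For this key claim I would introduce the subsheaf $N\subseteq E$ generated by the global sections $H^0(E)=H^0(L)$ and its saturation $\bar N$, a subbundle of $E$ away from finitely many points. At a general $p$ one has $V_0=\bar N\otimes\kappa(p)$, hence $r=\operatorname{rank}\bar N$; moreover any $\sigma\in H^0(E(-kp))\subseteq H^0(E)=H^0(N)\subseteq H^0(\bar N)$ vanishes to order $\ge k$ at $p$ as a section of the subbundle $\bar N$, so its symbol $t^{-k}\sigma$ evaluated at $p$ lies in $\bar N\otimes\kappa(p)$. This gives $V_k\subseteq\bar N\otimes\kappa(p)$ and therefore $\dim V_k\le\operatorname{rank}\bar N=r$, as required.

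The main obstacle, and the point that makes the argument delicate, is that this claim genuinely fails without generality of $p$ (equivalently of $F$): over a special point a section may vanish along $F$ to high order, so that some increment $g(k)-g(k+1)$ strictly exceeds $r=g(0)-g(1)$ and $g$ need not be convex in $k$. The whole force of choosing $F$ general is to force $V_0=\bar N\otimes\kappa(p)$ and to keep $\bar N$ a subbundle at $p$, which is exactly what confines every $V_k$ to the fixed $r$-dimensional space $\bar N\otimes\kappa(p)$. I would therefore state the base-change and subbundle facts as holding for the general fiber $F$ from Theorem~\ref{decomposition}, and verify that the finite loci to be avoided (torsion of $f_*L$, the saturation $\bar N\subsetneq E$, and non-constancy of $h^0(L|_{F_p})$) are all avoided by a general $p$.
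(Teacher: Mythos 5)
Your proposal is correct, but it proves the key estimate by a genuinely different route from the paper. The paper's proof stays entirely on $X$ and is elementary: it notes that $r=h^0(L)-h^0(L-F)$ and then shows $h^0(L-iF)-h^0(L-(i+1)F)\le r$ for $0\le i\le a_0-1$ by induction, the inductive step coming from the exact sequence $0 \to H^0(L-2F) \to H^0(L-F)\oplus H^0(L-F) \to H^0(L)$ (a pencil-trick/Mayer--Vietoris type sequence, properly read with two distinct general fibers together with a semicontinuity comparison). You instead push down to the base curve, identify $H^0(L-kF)$ with $H^0(Y,E(-kp))$ for $E=f_*L$, and confine every evaluation image $V_k$ to the fiber at $p$ of the saturation $\bar N$ of the subsheaf of $E$ generated by global sections, whose rank equals $r$ at a general $p$. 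Your version buys a uniform, induction-free bound $\dim V_k\le r$ for all $k$ at once, and it makes completely explicit where the generality of $F$ enters --- a point the paper's induction (which applies the displayed sequence to $L-iF$ rather than to $L$, and hence implicitly needs the same care about which fiber is general for which bundle) passes over rather quickly; the paper's version buys brevity and avoids any appeal to $f_*L$, saturation, or cohomology and base change. Both arguments are sound and yield the same inequality.
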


\begin{proof}
We know that $r=h^0(L)-h^0(L-F)$. Following the same strategy of the proof of Proposition, one can see that in order to prove the inequality here, we only need to show that
$$
h^0(L-iF) - h^0(L-(i+1)F) \le h^0(L)-h^0(L-F)
$$
for any $0 \le i \le a_0-1$.

In fact, the result holds if $a_0=1$. If $a_0 > 1$, by the following exact sequence
$$
0 \longrightarrow H^0(L-2F) \longrightarrow H^0(L-F) \oplus H^0(L-F) \longrightarrow H^0(L),
$$
we know that
$$
h^0(L-F) - h^0(L-2F) \le h^0(L) - h^0(L-F) = r.
$$
Therefore, we can finish the proof by induction.
\end{proof}

We also have the following lemma.
\begin{lemma}\label{algsumai}
Under the above setting, for $n=2, 3$, we have
$$
P_0L^{n-1}_0 \ge (n-1) d_0(a_0-1)+d_0\sum_{i=1}^N a_i.
$$
\end{lemma}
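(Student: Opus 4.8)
The plan is to carry everything up to the top blow-up model $X_N$ and collect the successive fiber-subtractions of Theorem \ref{decomposition} into a single coefficient of $F_N$. Iterating the relation $\pi_i^*(L_i-a_iF_i)=L_{i+1}+Z_{i+1}$ and using $F_{i+1}=\pi_i^*F_i$, one obtains by induction on $i$ a decomposition on $X_N$ of the form
\[
\rho_N^*L_0 = L_N' + R + \Big(\sum_{i=0}^N a_i\Big)F_N,
\]
where $L_N'=L_N-a_NF_N$ and $R$ is the effective divisor obtained as the sum of the total transforms of $Z_1,\dots,Z_N$. Since $P_N=\rho_N^*P_0$ and $F_N^2=0$, the projection formula gives
\[
P_0L_0^{n-1}=P_N(\rho_N^*L_0)^{n-1}=P_N(L_N'+R)^{n-1}+(n-1)\Big(\sum_{i=0}^N a_i\Big)P_N(L_N'+R)^{n-2}F_N.
\]

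The key point I would establish next is that the \emph{horizontal} $P$-degree $P_N(L_N'+R)^{n-2}F_N$ equals $d_0$. For $n=2$ this is constant by definition; for $n=3$ it follows from the identity $P_NZ_iF_N=d_{i-1}-d_i$, i.e.\ the drop of the horizontal degree at the $i$-th step is exactly carried by $Z_i$. That identity comes from writing $L_i=\pi_{i-1}^*(L_{i-1}-a_{i-1}F_{i-1})-Z_i$ and using $F_i^2=0$ with the projection formula. Summing telescopes $\sum_{i=1}^N(d_{i-1}-d_i)=d_0-d_N$, and together with $P_NL_N'F_N=d_N$ this yields $P_N(L_N'+R)^{n-2}F_N=d_0$. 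Hence
\[
P_0L_0^{n-1}=P_N(L_N'+R)^{n-1}+(n-1)\Big(\sum_{i=0}^N a_i\Big)d_0.
\]
Because $(n-1)\sum_{i=0}^N a_i\ge (n-2)a_0+\sum_{i=0}^N a_i-(n-1)$, and since the right-hand side of the lemma equals $d_0\big[(n-2)a_0+\sum_{i=0}^N a_i-(n-1)\big]$, the whole statement reduces to the single remainder estimate $P_N(L_N'+R)^{n-1}\ge -d_0\big[(n-2)\sum_{i=1}^N a_i+(n-1)\big]$.

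For $n=2$ this remainder bound is immediate: $P_N(L_N'+R)=P_NL_N'+P_NR\ge P_NL_N'\ge -d_0$, since $P_NR\ge0$ ($R$ effective, $P_N$ nef) and $P_NL_N'=P_NL_N-a_Nd_0\ge -d_0$ using that $L_N-(a_N-1)F_N$ is nef. This recovers the surface case, where in fact $d_i\equiv d_0$ collapses the argument to a one-line telescoping. The genuinely hard case is $n=3$: here the horizontal degrees strictly decrease, the naive main term is truly smaller than $d_0\sum a_i$, and the deficit must be recovered from the self-intersection $P_N(L_N'+R)^2$ of the non-nef remainder. I expect this to be the main obstacle. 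The plan for it is to expand $P_N(L_N'+R)^2$, bound the boundary contribution by the inequality $P_NL_N'^2\ge -2d_N$ already used in the proof of Proposition \ref{algcase1}, discard the nonnegative nef-times-effective cross terms, and control the delicate term $P_NR^2$ (equivalently each $P_iZ_i^2$) by exploiting $Z_i|_{F_i}>0$ together with the identity $P_NZ_iF_N=d_{i-1}-d_i$, which ties the size of each $Z_i$ to the amount by which the horizontal degree drops. Carrying this out as an induction on $N$, peeling off one blow-up $\pi_{N-1}$ at a time, is what keeps the bookkeeping of these correction terms manageable.
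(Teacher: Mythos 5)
Your setup is sound up to the identity $P_0L_0^{n-1}=P_N(L_N'+R)^{n-1}+(n-1)\bigl(\sum_{i=0}^N a_i\bigr)d_0$, and the $n=2$ case goes through, but for $n=3$ there is a genuine gap: the ``remainder estimate'' you reduce to is not a smaller problem --- it is the lemma itself in disguise. Indeed $L_N'+R=\rho_N^*L_0'-bF_N$ with $b=\sum_{i=1}^N a_i$, so by $F_N^2=0$ one gets $P_N(L_N'+R)^2=P_0L_0'^2-2bd_0$, and your required bound $P_N(L_N'+R)^2\ge -(b+2)d_0$ is exactly equivalent to $P_0L_0'^2\ge (b-2)d_0$, which (after adding back $P_0L_0^2-P_0L_0'^2=2a_0d_0$) is precisely the assertion of the lemma for $n=3$. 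The full expansion in powers of $F_N$ has therefore only reshuffled the inequality, and all the content sits in the one step you leave as a plan. That plan, moreover, is unlikely to close as stated: expanding $P_N(L_N'+R)^2$ produces the term $P_NR^2$, which involves the self-intersections $\tau_i^*Z_i\cdot\tau_j^*Z_j$ of the fixed parts; these can be arbitrarily negative, and the two facts you invoke ($Z_i|_{F_i}>0$ and $P_N\tau_i^*Z_i\cdot F_N=d_{i-1}-d_i$) only control the intersection of each $Z_i$ with the fiber class, not $Z_i^2$. (This is why $n=2$ is easy for you: there $R$ enters linearly and effectiveness of $R$ plus nefness of $P_N$ suffice; for $n=3$ it enters quadratically.)

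The paper avoids the issue by never expanding the square. It works with the nef class $L_0'+F_0=L_0-(a_0-1)F_0$ and writes $P_0(L_0'+F_0)^{n-1}=P_N(\tau_0^*L_0'+F_N)^{n-2}(L_N'+F_N+bF_N+Z)$, substituting the decomposition into only \emph{one} of the factors and keeping the other as the globally nef class $\tau_0^*L_0'+F_N$. Every resulting term is then a product of nef classes with one further nef or effective class, hence nonnegative, the $bF_N$ piece alone contributes $bd_0$, and the correction $P_0L_0^{n-1}-P_0(L_0'+F_0)^{n-1}=(n-1)(a_0-1)d_0$ finishes the proof. If you want to salvage your computation, replace the expansion of $(L_N'+R)^2$ by this one-factor substitution; as written, the $n=3$ case is not proved.
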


\begin{proof}
For $i=0, \cdots, N-1$, denote by
$
\tau_i=\pi_{i} \circ \cdots \circ \pi_{N-1} : X_N \to X_i
$
the composition of blow-ups.

Write $b=a_1+\cdots+a_N$ and $Z=\tau^*_1Z_1+ \cdots + \tau^*_{N-1}Z_{N-1} + Z_N$. We have the following numerical equivalence on $X_N$:
$$
\tau^*_0 L'_0 \sim_{\rm{num}} L'_N + bF_{N} + Z.
$$
Since $L'_0+F_0$ and $L'_N+F_N$ are both nef, it follows that
\begin{eqnarray*}
P_0(L'_0+F_0)^{n-1} & = & P_N (\tau^*_0 L'_0 + F_{N})^{n-2}(L'_N+F_N+bF_N+Z) \\
& \ge & b P_N (\tau^*_0 L'_0 + F_{N})^{n-2} F_N \\
& \ge & b d_0.
\end{eqnarray*}
Combining with
$$
P_0L^{n-1}_0-P_0(L'_0+F_0)^{n-1}=(n-1)(a_0-1)d_0,
$$
the proof is finished.
\end{proof}

\section{Linear series on algebraic surfaces}
In this section, we recall some basic results about linear series on algebraic surfaces.
These results will be used to compare the number $r_i$ and $d_i$. They will also serve as
the first step of the induction process.

In this section, we always use the following assumptions:
\begin{itemize}
\item[(1)] $S$ is a smooth algebraic surface of general type with the smooth minimal model $\sigma: S \to S'$;
\item[(2)] $L \ge M$ are two nef line bundles on $S$ such that $L \le K_S$.
\end{itemize}

We have the following Noether type results.
\begin{theorem}  \label{noether}
Assume that $\phi_L$ is generically finite. Then
$$
LM \ge 2h^0(M) - 4.
$$
\end{theorem}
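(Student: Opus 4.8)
The plan is to reduce the inequality to Clifford's theorem applied to a general curve in the movable part of $|M|$, using the hypotheses $M \le L \le K_S$ to force speciality of the relevant restricted linear series. First, if $h^0(M) \le 2$ the claim is trivial, since $L$ and $M$ are nef and hence $LM \ge 0 \ge 2h^0(M)-4$; so I assume $h^0(M) \ge 3$ throughout.

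Next I would pass to a resolution $\pi \colon \tilde S \to S$ on which the movable part of $\pi^* M$ is base point free, writing $\pi^* M = N + V$ with $N$ base point free, $V$ effective, and $h^0(N) = h^0(M) =: r+1$. Setting $\tilde L = \pi^* L$, the hypotheses give the chain $N \le \pi^* M \le \tilde L \le \pi^* K_S \le K_{\tilde S}$; in particular $\tilde L$ is effective (as $M$, and hence $L$, is effective) and $K_{\tilde S}$ is effective. The computation $LM = \tilde L \cdot \pi^* M \ge \tilde L \cdot N \ge N^2$ shows it suffices to bound $N^2$ from below by $2h^0(M)-4$, and since $N$ is base point free this quantity equals $\deg(N|_C)$ for a general $C \in |N|$.

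The main case is when $|N|$ is not composed with a pencil. Then a general $C \in |N|$ is smooth and irreducible by Bertini, and the restriction map $H^0(N) \to H^0(N|_C)$ has image of dimension $h^0(N) - h^0(N-C) = (r+1) - 1 = r$, whence $h^0(C, N|_C) \ge r$. The key point is that $N|_C$ is \emph{special}: by adjunction $K_C - N|_C = (K_{\tilde S} + C - N)|_C = K_{\tilde S}|_C$, and since $K_{\tilde S}$ is effective while $C$ is a mobile general member, $K_{\tilde S}|_C$ is effective, so $h^1(C, N|_C) = h^0(C, K_{\tilde S}|_C) > 0$. Clifford's theorem then yields $N^2 = \deg(N|_C) \ge 2(h^0(C, N|_C) - 1) \ge 2r - 2 = 2h^0(M)-4$, as desired.

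The remaining, and most delicate, case is when $|N|$ is composed with a pencil: here $N^2 = 0$, so Clifford is vacuous and the argument above collapses. This is precisely where the hypothesis that $\phi_L$ is generically finite must enter. After blowing up I obtain a fibration $g \colon \tilde S \to B$ with general fiber $F$ and $N \equiv aF$, where $a \ge h^0(N) - 1 = r$. Then $LM \ge \tilde L \cdot N = a\,(\tilde L \cdot F)$, so it is enough to show $\tilde L \cdot F \ge 2$. To this end, note that $S$, being of general type, is not uniruled, so $F$ has genus $\ge 1$; if $\tilde L \cdot F = 1$ then $\deg(\tilde L|_F) = 1$ forces $h^0(C, \tilde L|_F) \le 1$, meaning $\phi_{\tilde L}$ contracts every general fiber $F$ and is therefore not generically finite, contradicting the hypothesis on $\phi_L$. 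Hence $\tilde L \cdot F \ge 2$ and $LM \ge 2a \ge 2r = 2h^0(M) - 2 > 2h^0(M)-4$. I expect this pencil case---pinning down $\tilde L \cdot F \ge 2$ through the generic finiteness of $\phi_L$---to be the main obstacle, since everything else reduces cleanly to Clifford's theorem.
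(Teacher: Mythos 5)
Your proof is correct and follows essentially the same route as the paper's: the same dichotomy according to whether the (movable part of the) system $|M|$ is composed with a pencil, with the pencil case resolved by exactly the paper's observation that generic finiteness of $\phi_L$ forces $L$ to have degree at least $2$ on a general member of the pencil (a degree $\le 1$ bundle with two sections would make that mobile curve rational, impossible on a surface of general type). The only real difference is in the non-pencil case, where the paper simply quotes Shin's inequality $M^2 \ge 2h^0(M)-4$ from \cite[Theorem 2]{Sh}, whereas you reprove it in a self-contained way via Clifford's theorem on a general curve $C$ in the base-point-free movable part, using $K_S \ge L \ge M$ to guarantee that $N|_C$ is special --- a correct substitute for the citation.
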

\begin{proof}
To prove this result, we can assume that $h^0(M) \ge 2$.
If $|M|$ is not composed with a pencil, from a result in \cite[Theorem 2]{Sh}, we know that
$$
LM \ge M^2 \ge 2h^0(M) - 4.
$$

If $|M|$ is composed with a pencil, we can write
$$
M \sim_{\rm{num}} r C + Z.
$$
Here $C$ is a general member of the pencil, $r \ge h^0(M) - 1$ and $Z$ is the fixed part of $|M|$.
Because $\phi_L$ is generically finite, $h^0(L|_C) \ge 2$. It implies that $LC \ge 2$ since $S$ is of general type. We get
$$
LM \ge r LC \ge 2h^0(M) - 2.
$$
Hence we prove the theorem.
\end{proof}
The above theorem is also used in \cite{Su}.
\begin{theorem} \label{noether2} \cite[Lemma 2.3]{Oh}
If $|L|$ is not composed with a pencil and $h^0(L) < p_g(S)$, then
$$
(\sigma^*K_{S'})L \ge 2h^0(L)-2.
$$
If $|L|$ is composed with a pencil,
then
$$
(\sigma^*K_{S'})L \ge 2h^0(L) - 2,
$$
except for the case when $K^2_{S'}=1$, $p_g(S)=2$ and $q(S)=0$.
\end{theorem}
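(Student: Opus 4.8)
The plan is to reduce everything to estimating $(\sigma^*K_{S'})\cdot M$, where $M$ denotes the movable part of $|L|$. Since $\sigma^*K_{S'}$ is nef and $L-M$ is effective, one has $(\sigma^*K_{S'})\cdot L\ge(\sigma^*K_{S'})\cdot M$; after a birational morphism $p:\tilde S\to S$ resolving the base points of $|M|$ and writing $p^*M=\tilde M+V$ with $\tilde M$ base point free and $V\ge 0$ supported on the $p$-exceptional locus, nefness of $p^*\sigma^*K_{S'}$ gives $(\sigma^*K_{S'})\cdot M\ge(p^*\sigma^*K_{S'})\cdot\tilde M$. Thus it suffices to bound $(p^*\sigma^*K_{S'})\cdot\tilde M$ from below by $2h^0(L)-2$. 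Throughout I will use that $\sigma^*K_{S'}$ is nef and big with $(\sigma^*K_{S'})^2=K^2_{S'}\ge 1$, and that $K_S=\sigma^*K_{S'}+E$ with $E\ge 0$ on the $\sigma$-exceptional locus.

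In the case where $|L|$ is composed with a pencil, write $\tilde M\equiv aC$ with $C$ the class of a general member; the bound $h^0\le\deg+1$ on the base curve gives $a\ge h^0(L)-1$, exactly as in the proof of Theorem~\ref{noether}. Since $(\sigma^*K_{S'})\cdot\tilde M=a\,(\sigma^*K_{S'})\cdot C$, the problem reduces to proving $(\sigma^*K_{S'})\cdot C\ge 2$. Because $C$ moves we have $C^2\ge 0$, and the Hodge index theorem together with $(\sigma^*K_{S'})^2\ge 1$ immediately yields $(\sigma^*K_{S'})\cdot C\ge 1$. The real point is to exclude $(\sigma^*K_{S'})\cdot C=1$: Hodge index then forces $K^2_{S'}\cdot C^2\le 1$, so either $C^2=0$ or $(K^2_{S'},C^2)=(1,1)$. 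I would analyze each possibility on the minimal model $S'$ via adjunction applied to $C'=\sigma_*C$; a genus and parity computation shows that the borderline occurs only when $K^2_{S'}=1$ and the pencil is the genus-two canonical pencil, that is, precisely when $K^2_{S'}=1$, $p_g(S)=2$ and $q(S)=0$. In every other case $(\sigma^*K_{S'})\cdot C\ge 2$, whence the inequality.

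For the case where $|L|$ is not composed with a pencil, $\phi_{\tilde M}$ is generically finite onto a nondegenerate surface in $\mathbb P^{h^0(L)-1}$, so a general $\tilde C\in|\tilde M|$ is a smooth irreducible curve. I would restrict $\sigma^*K_{S'}$ to $\tilde C$ and apply Clifford's theorem to the line bundle $N:=(p^*\sigma^*K_{S'})|_{\tilde C}$, whose degree is exactly the quantity $(p^*\sigma^*K_{S'})\cdot\tilde M$ to be bounded. First, $N$ is special: since $K_{\tilde S}-p^*\sigma^*K_{S'}=p^*E+R$ is effective (with $R$ the discrepancy of $p$), adjunction gives that $K_{\tilde C}-N=(p^*E+R+\tilde C)|_{\tilde C}$ is effective, so $h^1(\tilde C,N)=h^0(\tilde C,K_{\tilde C}-N)>0$. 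Second---and this is where $h^0(L)<p_g(S)$ enters---I would show $h^0(\tilde C,N)\ge h^0(L)$ by restricting $|\sigma^*K_{S'}|$, which has $h^0=p_g(S)$, to $\tilde C$ and bounding the kernel $h^0(p^*\sigma^*K_{S'}-\tilde M)$ of the restriction map by $p_g(S)-h^0(L)$ using the strict inequality. Granting these two facts, Clifford's theorem yields $\deg N\ge 2(h^0(N)-1)\ge 2h^0(L)-2$.

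The main obstacle is precisely the section count $h^0(\tilde C,N)\ge h^0(L)$ in the last step: one must simultaneously exploit $h^0(L)<p_g(S)$ and control the discrepancy $E$ between $K_S$ and its nef part $\sigma^*K_{S'}$---this discrepancy is the only reason the statement concerns $\sigma^*K_{S'}$ rather than $K_S$, and it is what obstructs a naive comparison $N\ge\tilde M|_{\tilde C}$. The delicate adjunction bookkeeping on $S'$ that isolates the single exceptional surface $K^2_{S'}=1$, $p_g=2$, $q=0$ in the pencil case is the other place requiring genuine care, and for the precise classification input I would follow Ohno~\cite{Oh}.
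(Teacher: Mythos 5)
The paper offers no proof of this statement---it is quoted directly from Ohno as \cite[Lemma 2.3]{Oh}---so I am judging your argument on its own merits rather than against an in-paper proof. Your treatment of the pencil case is essentially sound: the reduction to $(\sigma^*K_{S'})C\ge 2$ for a general member $C$, the exclusion of $(\sigma^*K_{S'})C=0$ by nefness and bigness of $\sigma^*K_{S'}$, and the parity/Hodge-index analysis of the case $(\sigma^*K_{S'})C=1$ (adjunction forces $C'^2$ odd, hence $C'^2\ge 1$, hence $K^2_{S'}=C'^2=1$ and $C'\equiv K_{S'}$ with $p_a(C')=2$; since $h^0(L)\ge 2$ forces $p_g(S)\ge 2$, Noether and Debarre-type bounds pin down $p_g=2$, $q=0$) do isolate exactly the stated exception.

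The generically finite case, however, has a genuine gap at the step you yourself flag as the main obstacle, and the strict inequality $h^0(L)<p_g(S)$ does not close it in the way you suggest. You need $h^0(\tilde C,N)\ge h^0(L)$, which amounts to bounding the kernel of the restriction map by $h^0(p^*\sigma^*K_{S'}-\tilde M)\le p_g(S)-h^0(L)$. The only general estimate available here is $h^0(A)\ge h^0(A-B)+h^0(B)-1$ for two effective classes, which yields $h^0(p^*\sigma^*K_{S'}-\tilde M)\le p_g(S)-h^0(L)+1$; that loses exactly the unit you need, and after Clifford you land at $2h^0(L)-4$, i.e.\ at the weaker bound of Theorem \ref{noether} rather than the claimed $2h^0(L)-2$. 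The hypothesis $h^0(L)<p_g(S)$ rescues you only when $h^0(p^*\sigma^*K_{S'}-\tilde M)\le 1$; when it is $\ge 2$ you would have to rule out equality in $h^0(A)\ge h^0(A-B)+h^0(B)-1$, and that equality genuinely occurs for decomposable systems, so some additional geometric input is required. A workable repair avoids the section count altogether: push $M$ down to $M'=\sigma_*M$ on $S'$ and write $K_{S'}M'=M'^2+(K_{S'}-M')M'$; then $M'^2\ge 2h^0(M')-4$ by the generically finite bound of Theorem \ref{noether}, and one treats separately the case $K_{S'}-M'=0$ (where Noether's inequality $K^2_{S'}\ge 2p_g-4$ plus $p_g\ge h^0(L)+1$ gives the claim---this is where the hypothesis $h^0(L)<p_g(S)$ actually enters) and the case $K_{S'}-M'>0$ (where one needs $(K_{S'}-M')M'\ge 2$, a numerical connectedness statement for effective canonical divisors that itself requires care). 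As written, your proof establishes only $(\sigma^*K_{S'})L\ge 2h^0(L)-4$ in the non-pencil case.
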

We refer to \cite{Ba1} for more of the above type.
We have a better Castelnuovo type bound under some extra conditions.
\begin{theorem} \label{castelnuovo}
Assume that $S$ has no hyperelliptic pencil and that $\phi_L$ is generically finite. Then
$$
LM \ge 3h^0(M) - 7.
$$
\end{theorem}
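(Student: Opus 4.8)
The plan is to run the argument of Theorem \ref{noether} almost verbatim, replacing the factor $2$ by $3$ at each step and using the absence of a hyperelliptic pencil to power the improvement. First I would clear away the trivial range: if $h^0(M) \le 2$ then $3h^0(M) - 7 \le -1 < 0 \le LM$ because $L$ and $M$ are nef, so from now on I may assume $h^0(M) \ge 3$.

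Next I would split into the same two cases as in Theorem \ref{noether}. Suppose first that $|M|$ is composed with a pencil, and write $M \sim_{\mathrm{num}} rC + Z$ with $C$ a general member of the pencil, $r \ge h^0(M) - 1$, and $Z$ the fixed part. Since $\phi_L$ is generically finite it cannot contract the moving curve $C$, so the restriction $H^0(L) \to H^0(L|_C)$ is nonconstant and $h^0(L|_C) \ge 2$. If one had $LC = 2$, then $|L|_C|$ would be a $g^1_2$; as a surface of general type admits no fibration with rational or elliptic general fibre, the general $C$ has genus $\ge 2$, so $C$ would be hyperelliptic and $\{C\}$ a hyperelliptic pencil, contrary to hypothesis. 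Hence $LC \ge 3$, and therefore
\[
LM \ge r\,LC \ge 3\bigl(h^0(M) - 1\bigr) = 3h^0(M) - 3 \ge 3h^0(M) - 7 .
\]

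It remains to treat the case where $|M|$ is not composed with a pencil, so that $\phi_M$ is generically finite onto a nondegenerate surface $\Sigma \subset \mathbb P^{N}$ with $N = h^0(M) - 1$. Here I would use $LM \ge M^2$, which holds since $L - M$ is effective and $M$ is nef, and reduce the theorem to the surface Castelnuovo inequality $M^2 \ge 3h^0(M) - 7$. Writing $d = \deg \Sigma$ and $\lambda = \deg \phi_M$, one has $M^2 \ge \lambda d$. If $\lambda = 1$ the map $\phi_M$ is birational, so $\Sigma$ is of general type; in particular $\Sigma$ is neither of minimal degree nor a del Pezzo surface, these being rational, and the Castelnuovo bound for nondegenerate surfaces, obtained by cutting with a general hyperplane and applying Castelnuovo's genus bound to the resulting nondegenerate curve section, gives $d \ge 3h^0(M) - 7$. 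If $\lambda \ge 2$ the image $\Sigma$ can have small degree, but then the pencils ruling the extremal surfaces (the ruling of a scroll, the conics on the Veronese, the lines through the vertex of a cone) pull back under $\phi_M$ to pencils on $S$ whose general member is a double cover of a rational curve, that is, a hyperelliptic pencil; this is excluded, and outside these configurations the numerical bound on $d$ already yields $\lambda d \ge 3h^0(M) - 7$.

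I expect the whole difficulty to sit in this last case. One must pass through the classification of surfaces of small degree in $\mathbb P^{N}$ (minimal degree, del Pezzo, and the first strata above them) and check, in each instance, either that the hyperplane-section estimate already forces $d \ge 3h^0(M) - 7$, or that the covering $\phi_M$ together with the pencils living on the extremal image manufactures a hyperelliptic pencil on $S$. Controlling the interaction between the covering degree $\lambda$ and the geometry of $\Sigma$ is exactly the point at which the factor $3$ becomes delicate: whereas the factor $2$ in Theorem \ref{noether} tolerated the double covers of scrolls through Shin's bound, here those very covers are what the no-hyperelliptic-pencil hypothesis is needed to rule out.
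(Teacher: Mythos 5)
Your reduction to $h^0(M)\ge 3$ and your treatment of the case where $|M|$ is composed with a pencil are correct and coincide with the paper's own argument: one writes $M\sim_{\rm num} rC+Z$ with $r\ge h^0(M)-1$, notes that the general member $C$ has genus $\ge 2$ (as $S$ is of general type) and is not hyperelliptic, so that $h^0(L|_C)\ge 2$ forces $LC\ge 3$ and hence $LM\ge rLC\ge 3h^0(M)-3$.

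The genuine gap is in the case where $\phi_M$ is generically finite. The paper settles it in one line by quoting the Castelnuovo inequality $M^2\ge 3h^0(M)-7$ from Beauville \cite[Th\'eor\`eme 5.5]{Be}; you instead try to derive it from the degree of the image $\Sigma=\phi_M(S)\subseteq\mathbb P^{N}$, $N=h^0(M)-1$, together with the classification of surfaces of small degree, and this route cannot close. A single example refutes both of your key claims: let $S\subset\mathbb P^3$ be a very general smooth quintic (of general type, Picard number one, no hyperelliptic pencil) and $M=\mathcal O_S(3)$. Then $\phi_M$ is an embedding, $M^2=45$, $h^0(M)=20$, and $3h^0(M)-7=53>45$. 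So there is no ``Castelnuovo bound'' of slope $3$ for nondegenerate non-rational surfaces (the re-embedded quintic is a nondegenerate general-type surface in $\mathbb P^{19}$ of degree $45<3\cdot 19-4$), and the inequality $M^2\ge 3h^0(M)-7$ is simply false for a nef $M$ with $\phi_M$ birational on a surface with no hyperelliptic pencil. What makes the statement true is the standing hypothesis of the section, $M\le L\le K_S$, which your argument never uses in this case. It enters through adjunction on a general curve $C\in|M|$: since $K_SM\ge M^2$, one has $2g(C)-2=M^2+K_SM\ge 2M^2$, so the restricted series $|M|_C|$ — of degree $M^2$ with at least $h^0(M)-1$ sections — is special, and it is the Castelnuovo-type bound for special linear series on curves that produces the constant $3$. (Your $\lambda=2$ discussion is also left open — double covers of images of degree between $N-1$ and $\tfrac{3N-4}{2}$, e.g.\ del Pezzo surfaces, are not handled — but repairing that would not rescue the birational case.) The fix is to do what the paper does: invoke Beauville's theorem, or reproduce its curve-section argument, rather than argue from the projective geometry of $\Sigma$ alone.
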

\begin{proof}
We can assume that $h^0(M) \ge 3$. If $\phi_M$ is generically finite, then by the Castelnuovo type inequality (c.f. \cite[Th\'eor\`eme 5.5]{Be}),
$$
LM \ge M^2 \ge 3h^0(M) - 7.
$$
If not, we can still write $M \sim_{\rm num} r C + Z$
as before. Note that $C$ is not hyperelliptic. Hence $LC \ge 3$. It gives
$$
LM \ge r LC \ge 3h^0(M)-3.
$$
It ends the proof.
\end{proof}

\begin{theorem} \label{castelnuovo2}
Assume that $S$ has no hyperelliptic pencil.
If $|L|$ is composed with a pencil,
then
$$
(\sigma^*K_{S'})L \ge 3h^0(L) - 3,
$$
except for the case when $K^2_{S'}=2$, $p_g(S)=2$ and $q(S)=0$.
If $|L|$ is not composed with a pencil and $h^0(L) < p_g(S)$, then
$$
(\sigma^*K_{S'})L \ge 3 h^0(L)- 5.
$$
\end{theorem}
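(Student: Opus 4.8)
The plan is to run the proof of Theorem \ref{noether2} (Ohno's method), upgrading the Noether slope $2$ to the Castelnuovo slope $3$ by using that $S$ has no hyperelliptic pencil, in exact analogy with how Theorem \ref{castelnuovo} refines Theorem \ref{noether}. As in those statements I would split into the case where $|L|$ is composed with a pencil and the case where $\phi_L$ is generically finite.

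\textbf{The pencil case.} Following the proof of Theorem \ref{castelnuovo}, write $L \sim_{\rm num} rC + Z$, where $C$ is the general member of the pencil, $r \ge h^0(L)-1$, and $Z \ge 0$ is the fixed part. Since $\sigma^*K_{S'}$ is nef we have $(\sigma^*K_{S'})Z \ge 0$, so
$$
(\sigma^*K_{S'})L \ge r\,(\sigma^*K_{S'})C \ge (h^0(L)-1)(\sigma^*K_{S'})C.
$$
Hence the desired bound $3h^0(L)-3$ follows at once \emph{provided} $(\sigma^*K_{S'})C \ge 3$. Because $S$ has no hyperelliptic pencil, $C$ is non-hyperelliptic, so $g(C) \ge 3$. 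I would prove $(\sigma^*K_{S'})C \ge 3$ by comparing $\sigma^*K_{S'}|_C$ with $K_C$ through adjunction, using $(\sigma^*K_{S'})|_C = K_C - (C+E)|_C$, where $E$ is the exceptional divisor of $\sigma$. The only way to force $(\sigma^*K_{S'})C \le 2$ is that $C$ behaves like a canonical curve on the minimal model with $K^2_{S'} = (\sigma^*K_{S'})C$; tracking this degenerate configuration should isolate the single exception $K^2_{S'}=2,\, p_g(S)=2,\, q(S)=0$. Indeed, when $p_g=2$ the system $|K_{S'}|$ is a pencil whose general member $D \sim K_{S'}$ has $D^2 = K^2_{S'}=2$ and genus $g(D)=3$ (so possibly non-hyperelliptic), while $(\sigma^*K_{S'})D = K^2_{S'}=2$; this is precisely the configuration that yields only slope $2$. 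This parallels the exception $K^2_{S'}=1,\,p_g=2,\,q=0$ in Theorem \ref{noether2}, the shift $1 \mapsto 2$ matching the slope shift $2 \mapsto 3$.

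\textbf{The non-pencil case.} Here $\phi_L$ is generically finite and $h^0(L) < p_g(S)$. Mirroring Ohno's argument for the bound $2h^0(L)-2$, I would pass to a general curve $C \in |L|$ and study the restricted linear system, but now invoke the stronger Castelnuovo-type estimate that becomes available in the absence of a hyperelliptic pencil (so that the relevant image curve is non-hyperelliptic, equivalently the canonical geometry is of Castelnuovo type). Combining the nondegeneracy of the image with the speciality forced by $h^0(L) < p_g(S)$ should upgrade the slope from $2$ to $3$, while the Castelnuovo defect produces the additive constant $-5$; compare the $-7$ appearing in Theorem \ref{castelnuovo} and the generic $3s-5$ shape of Castelnuovo-type degree bounds.

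\textbf{Main obstacle.} The hard part is extracting the sharp slope-$3$ constant together with the exact exceptional list, rather than a weaker estimate. In the pencil case this amounts to excluding $(\sigma^*K_{S'})C \in \{1,2\}$ outside the stated surface, i.e. a careful adjunction and numerical analysis of how the general fiber meets the exceptional locus of $\sigma$ and of the low-$p_g$, low-$K^2$ surfaces. In the non-pencil case the delicate point is making the Castelnuovo estimate on the restricted system interact correctly with the hypothesis $h^0(L) < p_g(S)$ so as to yield the optimal constant $-5$.
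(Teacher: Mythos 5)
Your pencil case follows the paper's route in outline: write $L \sim_{\rm num} rC+Z$ with $r \ge h^0(L)-1$, reduce to showing $(\sigma^*K_{S'})C \ge 3$, and use non-hyperellipticity to get $p_a \ge 3$. But you leave the crucial exclusion of $(\sigma^*K_{S'})C \le 2$ as a ``tracking of a degenerate configuration,'' whereas this is exactly where the work is. The paper's argument is short and you should be able to supply it: setting $C'=\sigma(C)$, the curve $C'$ is nef and $C' \le K_{S'}$, so $C'^2 \le K_{S'}C' \le 2$; adjunction $2p_a(C')-2 = K_{S'}C'+C'^2 \ge 4$ then forces $K_{S'}C'=C'^2=2$, and the Hodge index theorem (equality case) gives $K_{S'} \sim_{\rm lin} C'$, hence $K^2_{S'}=2$ and $p_g(S)=2$. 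Your adjunction-on-$C$ heuristic points in the right direction but does not by itself isolate the exception.

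The non-pencil case is a genuine gap. You propose to mirror Ohno's restriction-to-a-curve argument and assert that the absence of a hyperelliptic pencil ``should upgrade the slope from $2$ to $3$'' with constant $-5$, but you give no mechanism that produces either the slope or the constant, and it is not clear that a restriction argument alone yields them. The paper's proof is a different and much shorter device: since $\phi_L$ is generically finite and $S$ has no hyperelliptic pencil, Theorem \ref{castelnuovo} gives $L^2 \ge 3h^0(L)-7$, Beauville's Castelnuovo inequality gives $K^2_{S'} \ge 3p_g(S)-7$, and the Hodge index theorem gives
$$
\bigl((\sigma^*K_{S'})L\bigr)^2 \ \ge\ L^2 K^2_{S'} \ \ge\ \bigl(3h^0(L)-7\bigr)\bigl(3p_g(S)-7\bigr).
$$
The hypothesis $h^0(L) < p_g(S)$ enters precisely here, via $p_g(S) \ge h^0(L)+1$, which makes the right-hand side strictly exceed $\bigl(3h^0(L)-6\bigr)^2$ and hence forces $(\sigma^*K_{S'})L \ge 3h^0(L)-5$. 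This multiplication of the two Castelnuovo bounds is the key idea of the second half of the theorem, and it is absent from your proposal.
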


\begin{proof}
If $|L|$ is composed with a pencil, we can write
$$
L \sim_{\rm num} rC + Z
$$
where $Z \ge 0$ is the fixed part, $C$ is a general member of the pencil and $r \ge h^0(L)-1$.
Let $C' = \sigma(C)$. Then $p_a(C') \ge 3$ by our assumption.

If $K_{S'}C' \ge 3$, then
$$
(\sigma^*K_{S'})L \ge r K_{S'}C' \ge 3h^0(L)-3.
$$
If $K_{S'}C' \le 2$, since $C'$ is nef and $C' \le K_S$, we get $C'^2 \le 2$. It implies that $K_{S'}C' = C'^2 =2$ as $p_a(C') \ge 3$. Note that $K^2_{S'} \ge K_{S'}C'$. By the Hodge index theorem, we have $K^2_{S'} = 2$, $K_{S'} \sim_{\rm lin} C'$ and $p_g(S) = p_g(S') = 2$. This is just the exceptional case.

When $\phi_L$ is generically finite, we can assume that $h^0(L) \ge 3$.
Since $h^0(L) < p_g(S)$,
by the Hodge index theorem and the Castelnuovo inequalities from both \cite[Th\'eor\`eme 5.5]{Be} and Theorem \ref{castelnuovo}, we have
\begin{eqnarray*}
((\sigma^*K_{S'})L)^2 & \ge & L^2 K^2_{S'} \ge (3h^0(L)-7)(3p_g(S)-7) \\
& \ge & 9 (h^0(L))^2 - 33h^0(L) + 28 \\
& > & 9 (h^0(L))^2 - 36h^0(L) + 36 \\
& = & (3h^0(L) - 6)^2.
\end{eqnarray*}
It implies that
$$
(\sigma^*K_{S'})L \ge 3h^0(L)-5.
$$
It ends the proof.
\end{proof}

We also give the following numerical result when $S$ has a free pencil.

\begin{theorem} \label{castelnuovoirr}
Suppose that $S$ has a free hyperelliptic pencil of genus $g \ge 6$. Then
$$
(\sigma^*K_{S'})L \ge 3h^0(L) - (4g-4).
$$
\end{theorem}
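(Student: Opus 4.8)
The plan is to run the same dichotomy on $\phi_L$ that drives Theorems \ref{castelnuovo} and \ref{castelnuovo2}, but to use the ambient hyperelliptic pencil to absorb the (low-genus, high-multiplicity) hyperelliptic fibers into the error term $4g-4$. Write $K' = \sigma^* K_{S'}$, a nef divisor with $K_S - K'$ effective, let $f : S \to B$ be the fibration induced by the free hyperelliptic pencil, and let $C$ be a general fiber, so that $C$ is smooth hyperelliptic of genus $g$, $C^2 = 0$, and by adjunction $K' \cdot C \le K_S \cdot C = 2g-2$ and $L \cdot C \le 2g-2$. We may assume $3h^0(L) - (4g-4) > 0$, so that $h^0(L) > (4g-4)/3$ is already fairly large.

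First I would dispose of the case in which $|L|$ is composed with a pencil. Writing $L \sim_{\mathrm{num}} s\Gamma + Z$ with $\Gamma$ a general member, $Z \ge 0$ the fixed part and $s \ge h^0(L) - 1$, we have $K' \cdot L \ge s (K' \cdot \Gamma)$. Since $S$ is of general type, the general member of such a pencil pushes to a curve of self-intersection $\ge 0$ on the minimal model, so $K' \cdot \Gamma = 2 p_a(\Gamma) - 2 \ge 2$, with genus $2$ the extremal case. If $K' \cdot \Gamma \ge 3$ we get $K' \cdot L \ge 3(h^0(L)-1) \ge 3h^0(L) - (4g-4)$ at once. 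If $K' \cdot \Gamma = 2$, then $\Gamma \cdot C \ge 1$ (a vertical $\Gamma$ would be numerically proportional to $C$, which has genus $g \ge 6$), so $L \le K_S$ forces $s \le L \cdot C \le 2g-2$ and hence $h^0(L) \le 2g-1 \le 4g-6$; then $K' \cdot L \ge 2(h^0(L)-1) \ge 3h^0(L) - (4g-4)$. This settles the composed case for all $g \ge 6$.

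It remains to treat the case in which $\phi_L$ is generically finite. Here I would restrict to the general fiber: let $r$ be the rank of $H^0(L) \to H^0(L|_C)$, so $r \ge 2$, and study the image curve $\phi_L(C) \subset \mathbb{P}^{r-1}$ of degree $L \cdot C \le 2g-2$, whose deformations sweep out the image surface $T = \phi_L(S)$. Clifford's inequality on the hyperelliptic curve $C$ only gives $L \cdot C \ge 2r - 2$, i.e.\ the coefficient $2$ dictated by the gonality of $C$; the extra unit needed to reach $3$ must come from the two-dimensional geometry of $T$ (the pencil of curves $\phi_L(C)$ forces $T$ to be scroll-like of controlled degree) together with the Hodge index inequality $(K' \cdot L)^2 \ge K_{S'}^2 \cdot L^2$ relating $K'$ and $L$.

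The main obstacle is precisely this upgrade from $2$ to $3$. The naive fibration filtration of Section 3 with $P = K'$ produces only $K' \cdot L \gtrsim (K'\cdot C)\sum_i a_i$ against $h^0(L) \le \sum_i a_i r_i$, and since $K' \cdot C$ and the $r_i$ are both of size $O(g)$ with $K' \cdot C \le 2g-2$, this caps the coefficient at $\approx 2$. To recover the coefficient $3$ I would instead exploit the \emph{strict} decrease of the restricted ranks $r_i$ from Remark \ref{r0r1} (equivalently, that higher multiples of the $g^1_2$ on a hyperelliptic curve cost proportionally more), carrying out the summation in the style of Xiao so that only the finitely many top steps contribute a deficit, and bounding that total deficit by $4g-4 = 2(2g-2)$ using $L \le K_S$ to cap the multiplicity that can accumulate on near-canonical fibers, where $L_i|_C$ approaches $K_C$ of degree $2g-2$. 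The hypothesis $g \ge 6$ is exactly what makes this accounting close.
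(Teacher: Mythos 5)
Your treatment of the case where $|L|$ is composed with a pencil is essentially sound (modulo the slip that $\sigma^*K_{S'}\cdot\Gamma$ equals $2p_a(\Gamma)-2$ only when $\sigma(\Gamma)^2=0$; in general it is only bounded above by $2p_a-2$, so the subcase $\sigma^*K_{S'}\cdot\Gamma=2$ with $p_a(\Gamma)=2$ needs the extra line you supply). But the generically finite case --- which is the entire content of the theorem --- is not proved. You correctly identify that a naive filtration along the hyperelliptic pencil only yields the coefficient $2$, but your proposed remedy (``exploit the strict decrease of the $r_i$, sum in the style of Xiao, and bound the total deficit by $4g-4$'') is a wish, not an argument: no inequality is derived, and it is not explained how the strict decrease of the $r_i$ converts a coefficient $2$ into a coefficient $3$, nor why $g\ge 6$ closes the accounting. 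As written, the hard half of the statement is asserted rather than established.

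The missing idea is a dichotomy on $d:=L\cdot C$ rather than on $\phi_L$. The paper observes that $d\le 2g-2$ and splits into three regimes. If $d=0$ then $L$ is numerically a multiple of $C$ and the bound is immediate. If $d$ is small (the paper's range is $2\le d\le 4$), Clifford's inequality on the hyperelliptic fiber gives $r_i\le h^0(L|_C)\le 3$, while $d_i=\deg(\sigma^*K_{S'}|_C)=2g-2\ge 10>3r_i$; so fiber by fiber the coefficient-$3$ comparison holds with room to spare and the filtration of Section 3 closes with error only $-(2g-2)$. If $d\ge 6$, the filtration along the pencil is abandoned entirely in favor of the sharp relative Noether inequality of \cite{YZ2}, $L^2\ge \frac{4d}{d+2}h^0(L)-2d$, where $\frac{4d}{d+2}\ge 3$ exactly when $d\ge 6$ and $2d\le 4g-4$ supplies the stated error term. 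Your proposal never isolates this trichotomy, and in particular never brings in the quantitative bound from \cite{YZ2} that handles the large-degree regime where, as you rightly worry, the fiberwise comparison genuinely fails to give $3$.
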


\begin{proof}
Denote by $C$ a general member of this pencil. Up to birational transformation, we can assume that $|L|$ is base point free.
In particular, $LC \le 2g-2$ should be an even number.

If $LC=0$, then $L \sim_{\rm num} aC$ where $a \ge h^0(L)-1$. Then
$$
(\sigma^*K_{S'})L= a (2g-2) > 3h^0(L) - 3.
$$

If $LC \ge 6$,
by \cite[Theorem 1.1]{YZ2}, one has
$$
L^2 \ge \frac {4LC}{LC+2} h^0(L) - 2 LC \ge 3h^0(L) - (4g-4).
$$

If $2 \le LC \le 4$, resume the notations in Theorem \ref{decomposition}, Proposition \ref{algcase1} and Lemma \ref{algsumai}. Denote
$L_0=L$ and $P=\sigma^*K_{S'}$. We have
\begin{eqnarray*}
h^0(L_0) & \le & \sum_{i=0}^N a_i r_i, \\
PL_0 & \ge & (a_0-1)d_0 + \sum_{i=1}^N a_id_i.
\end{eqnarray*}
Here $d_i=2g-2 \ge 10$ for all $i$ and $r_i \le h^0(L|_C) \le 3$ by the Clifford's inequality. It follows that
$$
d_i > 3r_i.
$$
Hence
$$
PL_0 \ge \sum_{i=0}^N a_id_i - d_0 > 3 \sum_{i=0}^N a_ir_i - d_0 \ge 3 h^0(L_0) - (2g-2).
$$
It finishes the proof.
\end{proof}

\section{Relative Noether inequalities}
In this section, we will prove several relative Noether inequalities. The relative Noether inequality
in \cite{Zh} studies linear series on fibered varieties over curves, while the relative
Noether inequalities in this section are devoted to studying fibered varieties whose fibers are curves.

\begin{assumption} \label{as1}
Throughout this section, we assume that:
\begin{itemize}
\item $X$ is a Gorenstein minimal projective 3-fold of general type;
\item $f: X \to Y$ is a fibration of curves of genus $g$ from $X$ to a normal projective surface $Y$;
\item $L$ is a nef and big line bundle on $Y$ such that $|L|$ is base point free. Write $B=f^*L$.
\item If $p_g(X) > 0$, write
$$
K_X=\sum_{i=1}^{I_0} H_i + V,
$$
where each $H_i$ is an irreducible and reduced horizontal divisor ($H_i$ and $H_j$ might be the same) and $V$ is the vertical part. Note that we have $I_0 \le 2g-2$.
Since $B|_{H_i}$ is big on $H_i$, we can find an integer $k>0$ such that
$(kB-K_X)|_{H_i}$ is pseudo-effective for each $i$. Also we can assume that $kB-V$ is pseudo-effective by increasing $k$.
\end{itemize}
\end{assumption}

\begin{theorem} \label{relnoether1}
Under Assumption \ref{as1}. Then one has
$$
h^0(K_X)-\frac {K^3_X} 4 \le \frac {3K^2_XB + 2K_XB^2} 4 + 2(2k+1).
$$
\end{theorem}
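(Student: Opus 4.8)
The goal is to bound $h^0(K_X) - K_X^3/4$ from above. My strategy is to apply the decomposition machinery of Theorem~\ref{decomposition} together with the numerical estimates of Proposition~\ref{algcase1} to the line bundle $L_0 = K_X$, with the auxiliary nef bundle taken to be $P = K_X$ itself. The plan is to exploit the fibration $f : X \to Y$ of curves over a surface, so that the relevant fiber $F$ is a curve of genus $g$, and to control the restriction data $r_i = h^0(K_X|_F)$ and $d_i = (K_X|_F)^2$ by Clifford-type and adjunction considerations on this curve. Since $X$ is minimal Gorenstein of general type, $K_X$ is nef and big, and $K_X|_F = K_F$ is a genus-$g$ canonical bundle, so Clifford's inequality gives $r_i \le g-1+1 = g$ and $d_i = \deg K_F = 2g-2$, yielding the crucial comparison $d_i \ge 2r_i - 2$ that drives the coefficient $4$.

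First I would invoke Proposition~\ref{algcase1}(1) to write $h^0(K_X) \le h^0(L_j') + \sum_{i=0}^j a_i r_i$ and Proposition~\ref{algcase1}(2) to write $P K_X^2 \ge 2a_0 d_0 + \sum_{i=0}^j a_i(d_{i-1}+d_i) - 2d_0$, choosing $j=N$ so that the residual term $h^0(L_N')$ vanishes. The idea is then to estimate $\sum a_i r_i$ against $\tfrac14 \sum a_i(d_{i-1}+d_i)$ term by term using $d_i \ge 2r_i$ up to bounded error on the curve $F$; the difference between $K_X^3$ and $P K_X^2 = K_X^3$ collapses because $P = K_X$, and the slack $K_X^2 B$ and $K_X B^2$ terms arise precisely from comparing the nef bundle $K_X$ against $B = f^*L$ along the blow-up tower, since the vertical and horizontal corrections in Assumption~\ref{as1} ($K_X = \sum H_i + V$ with $kB - K_X$ and $kB - V$ pseudo-effective) let me replace unbounded horizontal contributions by multiples of $B$. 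The constant $2(2k+1)$ should emerge as the accumulated error from the $I_0 \le 2g-2$ horizontal components and the factor $k$ controlling $(kB - K_X)|_{H_i}$.

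The main obstacle I anticipate is the careful bookkeeping of the error terms, specifically translating the inequality $d_i \ge 2r_i$ (which is exact only asymptotically) into the precise right-hand side $\tfrac{3K_X^2 B + 2 K_X B^2}{4} + 2(2k+1)$. The difficulty is that the decomposition tower lives on successive blow-ups $X_i$, so intersection numbers like $K_X^2 B$ must be pulled back and compared with the fiberwise degrees $d_i$; keeping track of how $B = f^*L$ interacts with the movable and fixed parts $L_{i+1}$, $Z_{i+1}$ requires using the pseudo-effectivity hypotheses to absorb the horizontal divisors $H_i$ into $kB$. The pseudo-effectivity definition in the Notations section (intersection with nef bundles is nonnegative) is exactly what makes terms such as $K_X^2(kB - K_X) \ge 0$ and $K_X B (kB - V) \ge 0$ usable, and I expect the bulk of the work to be assembling these into the stated bound while verifying that the additive constant does not accumulate beyond $2(2k+1)$.
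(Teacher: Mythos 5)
Your proposal does not follow the paper's route, and the route you describe cannot be carried out. The central problem is that Theorem~\ref{decomposition} and Proposition~\ref{algcase1} only apply to a fibration over a \emph{curve}: the general fiber $F$ must be a divisor so that $L_i-a_iF_i$ is a line bundle, and for $n=3$ the quantities $r_i=h^0(L_i|_{F_i})$ and $d_i=(P_i|_{F_i})(L_i|_{F_i})^{2}$ require $F_i$ to be a \emph{surface}. In Assumption~\ref{as1} the base $Y$ is a surface and the fibers of $f$ are curves, so you cannot run the decomposition "along $f$" with $F$ a genus-$g$ curve, and the comparison you propose ($d_i=\deg K_F=2g-2$, $r_i\le g$ by Clifford) is not even meaningful in this machinery. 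This is precisely the difficulty the introduction flags: a slope-type inequality for the curve fibration over a surface is not available, which is why the paper does not argue fiberwise along $f$ at all.

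What the paper actually does is manufacture an auxiliary pencil out of $|B|$: it blows up the intersection of two general members of $|B|$ to get $\sigma:X_0\to X$ and a fibration $X_0\to\mathbb{P}^1$ whose general fiber $F$ is the proper transform of a member of $|B|$, hence a \emph{surface}. It then takes $L_0=\sigma^*K_X$ and, crucially, $P_0=\sigma^*(K_X+B)$ (not $P=K_X$ as you propose), because adjunction gives $P_0|_F=K_F$; this is what allows the surface Noether inequality (Theorem~\ref{noether2}) to yield $r_i\le\tfrac12 d_i+2$, which is the true source of the coefficient $4$. Finally, Lemma~\ref{algsumai} bounds $\sum a_i$ by $P_0L_0^2/d_0+1$, and the pseudo-effectivity hypotheses on $kB-V$ and $(kB-K_X)|_{H_i}$ are used to show $P_0L_0^2/d_0=K_X^2(K_X+B)/(K_XB(K_X+B))\le 2k$; the terms $K_X^2B$ and $K_XB^2$ enter through $P_0L_0^2=K_X^3+K_X^2B$ and $d_0=K_X^2B+K_XB^2$, not through absorbing horizontal divisors into $kB$ along the blow-up tower. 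You have correctly identified the role of the pseudo-effectivity assumptions in producing the constant $2(2k+1)$, but they are attached to the wrong skeleton: without the auxiliary surface pencil and the choice $P_0=\sigma^*(K_X+B)$, the argument does not get off the ground.
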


\begin{proof}
To prove this result, one can assume that $h^0(K_X) > 0$.

Choose two very general members in $|B|$ and denote by $\sigma: X_0 \to X$ the blow-up of their intersection. Let $F$ be its proper transform. Then we get a fibration $X_0 \to \mathbb P^1$ with general fiber $F$. Denote $L_0=\sigma^*K_X$, $P_0=\sigma^*(K_X+B)$ and $B_0=\sigma^*B$.

Now, apply Theorem \ref{decomposition} to $X_0$, $L_0$, $P_0$ and also use the notations in Proposition \ref{algcase1}.
We can get
\begin{eqnarray*}
h^0(L_0) & \le & \sum_{i=0}^N a_{i} r_{i}; \\
P_0L^2_0 & \ge & 2\sum_{i=0}^N a_{i} d_i - 2d_0.
\end{eqnarray*}
To compare $r_i$ and $d_i$, note that $P_0|_F=K_F$. By Theorem \ref{noether2},
we know that
$$
r_i \le \frac 12 d_i + 2.
$$
By Lemma \ref{algsumai}, we have
$$
\sum_{i=0}^N a_i \le \frac{P_0L^2_0}{d_0} - a_0 + 2 \le \frac{P_0L^2_0}{d_0} + 1.
$$
Combine the above inequalities and we get
$$
h^0(L_0) - \frac {P_0L^2_0} 4 \le 2 \sum_{i=0}^N a_i + \frac{d_0} 2 \le \frac {2 P_0L^2_0} {d_0} + 2 + \frac{d_0}{2}.
$$
On the other hand, note that $kB-V$ and $(kB-K_X)|_{H_i}$ are pseudo-effective. We can get
\begin{eqnarray*}
(K_X + B) K_X V & \le & k(K_X + B) K_X B, \\
\sum_{i=1}^{I_0} (K_X + B) K_X H_i & \le & k\sum_{i=1}^{I_0} (K_X + B) B H_i \le k (K_X + B) K_X B.
\end{eqnarray*}
As a result, we have
\begin{eqnarray*}
\frac {P_0L^2_0} {d_0} & = & \frac {K^2_X(K_X + B)}{(K_X + B)K_X B} \\
& = &  \frac {(K_X + B)K_X V}{(K_X + B)K_X B} + \sum_{i=1}^{I_0} \frac {(K_X + B)K_X H_i}{(K_X + B)K_X B} \le 2k.
\end{eqnarray*}
Therefore,
\begin{eqnarray*}
h^0(K_X) & \le & \frac{P_0L^2_0}{4} + \frac{2P_0L^2_0}{d_0} + \frac{d_0}2 + 2 \\
& = & \frac{K^3_X + K^2_X B}4 + 4k + \frac{K^2_X B + K_X B^2}2 + 2 \\
& = & \frac{K^3_X} 4 + \frac{3K^2_X B + 2K_XB^2}4 + 2(2k+1).
\end{eqnarray*}
It finishes the proof.
\end{proof}

\begin{theorem} \label{relnoether2}
Still under Assumption \ref{as1}. Suppose that $X$ has no hyperelliptic pencil. Then one has
$$
h^0(K_X)-\frac{K^3_X}{6} \le \frac{3K^2_X B + 2K_X B^2}6 + \frac 73 (2k+1).
$$
\end{theorem}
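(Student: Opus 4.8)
The plan is to mimic the proof of Theorem \ref{relnoether1} step by step, replacing the Noether-type comparison between $r_i$ and $d_i$ by the corresponding Castelnuovo-type comparison that is available once $X$ has no hyperelliptic pencil. Concretely, I would again assume $h^0(K_X)>0$, blow up the base locus of two very general members of $|B|$ to obtain $\sigma\colon X_0\to X$ with a genus-$g$ fibration $X_0\to\mathbb P^1$ having general fiber $F$, and set $L_0=\sigma^*K_X$, $P_0=\sigma^*(K_X+B)$, $B_0=\sigma^*B$. Applying Theorem \ref{decomposition} together with Proposition \ref{algcase1} and Lemma \ref{algsumai} yields exactly the same two starting inequalities
\begin{equation*}
h^0(L_0)\le\sum_{i=0}^N a_i r_i,\qquad P_0L_0^2\ge 2\sum_{i=0}^N a_i d_i-2d_0,
\end{equation*}
since those are insensitive to the hyperelliptic hypothesis.

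The key change is the local comparison on the fiber. Here the relevant surface appearing in the induction is a general member $S\in|B|$, and the restriction $P_0|_F=K_F$ on the genus-$g$ curve $F$. Under the hypothesis that $X$ has no hyperelliptic pencil, the general fiber $F$ is non-hyperelliptic, so the Castelnuovo-type bound replaces the Noether bound $r_i\le\tfrac12 d_i+2$ with the sharper
\begin{equation*}
r_i\le\tfrac13 d_i+\tfrac73,
\end{equation*}
which is the fiberwise shadow of Theorem \ref{castelnuovo2} (the exceptional cases there correspond to low-genus configurations excluded by the no-hyperelliptic-pencil assumption). I would then feed this into the same bookkeeping: from Lemma \ref{algsumai}, $\sum_{i=0}^N a_i\le P_0L_0^2/d_0+1$, and substituting the improved $r_i$ bound gives
\begin{equation*}
h^0(L_0)-\frac{P_0L_0^2}{6}\le\frac73\sum_{i=0}^N a_i+\frac{d_0}{2}\le\frac73\cdot\frac{P_0L_0^2}{d_0}+\frac73+\frac{d_0}{2}.
\end{equation*}

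The final step is identical to the end of Theorem \ref{relnoether1}: using that $kB-V$ and $(kB-K_X)|_{H_i}$ are pseudo-effective (Assumption \ref{as1}) together with $I_0\le 2g-2$, I would bound the mixed intersection ratio $P_0L_0^2/d_0=K_X^2(K_X+B)/\bigl((K_X+B)K_XB\bigr)\le 2k$, and then translate $P_0L_0^2=K_X^3+K_X^2B$ and $d_0=K_X^2B+K_XB^2$ back into intersection numbers on $X$ to collect the stated bound
\begin{equation*}
h^0(K_X)-\frac{K_X^3}{6}\le\frac{3K_X^2B+2K_XB^2}{6}+\frac73(2k+1).
\end{equation*}
I expect the main obstacle to be justifying the fiberwise Castelnuovo bound $r_i\le\tfrac13 d_i+\tfrac73$ cleanly: one must verify that the hypothesis "no hyperelliptic pencil on $X$" genuinely forces the non-hyperelliptic conclusion on the general fiber and rules out the exceptional low-$K^2$, low-$p_g$ cases of Theorem \ref{castelnuovo2}, and that the constant $\tfrac73$ is exactly the value propagating through all the $L_i$ in the decomposition rather than just the initial $L_0$.
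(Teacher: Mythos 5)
Your proposal follows exactly the route the paper takes: the paper's own proof of Theorem \ref{relnoether2} is literally ``rerun the proof of Theorem \ref{relnoether1}, replacing the Noether-type bound $r_i \le \frac 12 d_i + 2$ by the Castelnuovo-type bound $r_i \le \frac 13 d_i + \frac 73$,'' which is precisely your plan. Two corrections are needed, however. First, your error term is off: from $h^0(L_0)\le \frac 13\sum a_i d_i+\frac 73\sum a_i$ and $\sum a_i d_i\le \frac 12 P_0L_0^2+d_0$ one gets
$$
h^0(L_0)-\frac{P_0L_0^2}{6}\le \frac 73\sum_{i=0}^N a_i+\frac{d_0}{3},
$$
with $\frac{d_0}{3}$, not the $\frac{d_0}{2}$ you carried over from the proof of Theorem \ref{relnoether1}; as written, your displayed inequality only yields $\frac{4K_X^2B+3K_XB^2}{6}$ in place of the stated $\frac{3K_X^2B+2K_XB^2}{6}$, so the constant must be fixed for the conclusion to follow. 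Second, the object to which the Castelnuovo-type bound is applied is not a ``genus-$g$ curve'': the general fiber $F$ of the pencil $X_0\to\mathbb P^1$ is a \emph{surface} (the proper transform of a general member of $|B|$), and $P_0|_F=K_F$ holds by adjunction for that surface. The hypothesis you need is that this surface has no hyperelliptic pencil --- immediate, since any such pencil on $F\subset X$ would be one on $X$ --- and then Theorems \ref{castelnuovo} and \ref{castelnuovo2} applied on $F_i$ give $r_i\le \frac 13 d_i+\frac 73$ exactly as in the paper. With these two points repaired, the rest of your bookkeeping ($\sum a_i\le P_0L_0^2/d_0+1$, the bound $P_0L_0^2/d_0\le 2k$ from the pseudo-effectivity assumptions, and the translation back to intersection numbers on $X$) matches the paper's argument verbatim.
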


\begin{proof}
It is very similar to the previous proof. We sketch it here. Under the same setting as in the proof of Theorem \ref{relnoether1}, we still have
\begin{eqnarray*}
h^0(L_0) & \le & \sum_{i=0}^N a_{i} r_{i}; \\
P_0L^2_0 & \ge & 2\sum_{i=0}^N a_{i} d_i - 2d_0.
\end{eqnarray*}
Here, since $F$ has no hyperelliptic pencil, by Theorem \ref{castelnuovo2}, we have
$$
r_i \le \frac 13 d_i + \frac 73.
$$
Combine with Lemma \ref{algsumai}. It gives
$$
h^0(L_0) - \frac{P_0L^2_0} 6 \le \frac 73 \sum_{i=0}^N a_i + \frac {d_0}3 \le \frac {7P_0L^2_0}{3d_0} + \frac 73 + \frac{d_0}3.
$$
Hence
\begin{eqnarray*}
h^0(K_X) & \le & \frac{P_0L^2_0} 6 + \frac {7P_0L^2_0}{3d_0} + \frac 73 + \frac{d_0}3 \\
& \le & \frac{K^3_X + K^2_XB} 6 + \frac 73 (2k+1) + \frac{K^2_XB + K_XB^2} 3 \\
& = & \frac{K^3_X}{6} + \frac{3K^2_X B + 2K_X B^2}6 + \frac 73 (2k+1).
\end{eqnarray*}
It finishes the proof.
\end{proof}

\begin{theorem}\label{essential}
Still under Assumption \ref{as1}. Suppose that $f$ is hyperelliptic and $g \ge 6$. Then
$$
h^0(K_X)-\frac{K^3_X}{6} \le \frac{3K^2_X B + 2K_X B^2}6 + \frac {4g-4}3 (2k+1).
$$
\end{theorem}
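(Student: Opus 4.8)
The plan is to repeat the proof of Theorem~\ref{relnoether2} line by line, changing only the surface-theoretic estimate that compares $r_i$ with $d_i$. First I would set up exactly as there: choose two very general members of $|B|$, let $\sigma\colon X_0\to X$ be the blow-up of their intersection, so that $X_0$ carries a fibration onto $\mathbb P^1$ whose general fiber $F$ is the surface $f^{-1}(C)$ for a general member $C\in|L|$, fibered over $C$ by the genus-$g$ fibers of $f$. Setting $L_0=\sigma^*K_X$ and $P_0=\sigma^*(K_X+B)$ and running Theorem~\ref{decomposition} together with Proposition~\ref{algcase1} (note $h^0(L_0)=h^0(K_X)$ and $P_0|_F=K_F$), I would again obtain
\[
h^0(L_0)\le\sum_{i=0}^N a_i r_i,\qquad P_0L_0^2\ge 2\sum_{i=0}^N a_i d_i-2d_0 .
\]

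The one genuinely new input is the comparison of $r_i$ and $d_i$. Since $f$ is hyperelliptic with general fiber of genus $g\ge 6$, the surface $F$ inherits from $f$ a free hyperelliptic pencil of genus $g$ (the fibration $F\to C$), and this pencil persists on each blow-up $F_i$. Applying Theorem~\ref{castelnuovoirr} to $F_i$ with the nef divisor $L_i|_{F_i}\le K_{F_i}$ then yields $d_i\ge 3r_i-(4g-4)$, that is, $r_i\le\frac13 d_i+\frac{4g-4}{3}$. This is precisely the analogue of the bound $r_i\le\frac13 d_i+\frac73$ supplied by Theorem~\ref{castelnuovo2} in the proof of Theorem~\ref{relnoether2}, with the constant $\frac73$ replaced by $\frac{4g-4}{3}$.

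From here it is the mechanical substitution of this constant. Feeding the estimate into the two displayed inequalities and invoking Lemma~\ref{algsumai} to bound $\sum_{i=0}^N a_i\le\frac{P_0L_0^2}{d_0}+1$, I would reach
\[
h^0(L_0)-\frac{P_0L_0^2}{6}\le\frac{4g-4}{3}\Bigl(\frac{P_0L_0^2}{d_0}+1\Bigr)+\frac{d_0}{3}.
\]
Then, exactly as in the proof of Theorem~\ref{relnoether1}, pseudo-effectivity of $kB-V$ and of $(kB-K_X)|_{H_i}$ gives $\frac{P_0L_0^2}{d_0}\le 2k$; substituting $P_0L_0^2=K_X^2(K_X+B)$ and $d_0=(K_X+B)K_XB$ and collecting terms yields the stated bound, the error constant assembling as $\frac{4g-4}{3}(2k+1)$.

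The main obstacle, and the only place requiring care beyond bookkeeping, is the second paragraph: one must check that the hyperelliptic structure of $f$ really does descend to a \emph{free} hyperelliptic pencil of genus $g$ on the general fiber $F$ (and hence on each $F_i$), and that $L_i|_{F_i}$ meets the hypotheses of Theorem~\ref{castelnuovoirr}, namely that it is nef and dominated by $K_{F_i}$. Once this applicability is confirmed, the remaining computation is identical to that of Theorem~\ref{relnoether2} up to the replacement of $\frac73$ by $\frac{4g-4}{3}$, so no new estimate is needed.
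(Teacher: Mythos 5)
Your proposal is correct and follows essentially the same route as the paper: substitute the bound $r_i \le \frac13 d_i + \frac{4g-4}{3}$ from Theorem~\ref{castelnuovoirr} (applicable because the general member $F=f^{-1}(C)$, $C\in|L|$, carries the free genus-$g$ hyperelliptic pencil induced by $f$) into the argument of Theorem~\ref{relnoether2}, then use Lemma~\ref{algsumai} and the pseudo-effectivity hypotheses to get $\frac{P_0L_0^2}{d_0}\le 2k$. Note that the paper's own (very terse) proof says to follow Theorem~\ref{relativenoether3fold2}, which appears to be a slip for Theorem~\ref{relnoether2}; your choice of template is the one that actually produces the stated constants.
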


\begin{proof}
Here we simply follow the proof of Theorem \ref{relativenoether3fold2}.
The only difference is that
in this case, $F$ has a free of genus $g$ induced by $f$.
By Theorem \ref{castelnuovoirr}, we have
$$
r_i \le \frac 13 d_i + \frac{4g-4}{3}.
$$
Follow the proof of Theorem \ref{relativenoether3fold2} almost verbatim. We can get
\begin{eqnarray*}
h^0(K_X) & \le & \frac{K^3_X + K^2_XB} 6 + \frac {4g-4} 3 (2k+1) + \frac{K^2_XB + K_XB^2} 3 \\
& = & \frac{K^3_X}{6} + \frac{3K^2_X B + 2K_X B^2}6 + \frac {4g-4}3 (2k+1).
\end{eqnarray*}
We leave the detailed proof to the interested readers.
\end{proof}

\section{Proof of Theorem \ref{3fold}}
In this section, we will prove Theorem \ref{3fold}. Note that the strategy here has been used by Pardini \cite{Pa} in the proof of Severi inequality for surfaces.

We first state a theorem that has been used in \cite{Zh}.
\begin{theorem} \label{albanese}
Let $X$ be a projective, minimal, normal and irregular variety. Denote by $a(X)$ its Albanese image. For each $d \in \mathbb N$, let $X_d$ be the $d$-th Albanese
lifting of $X$. Then for each $i=0, \cdots, \dim a(X)-1$, we have
$$
\lim_{d \to \infty} \frac{h^i(\mathcal O_{X_d})}{d^{2m}}=0.
$$
Here $m=h^1(\mathcal O_X)$.
\end{theorem}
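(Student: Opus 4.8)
The plan is to reduce the computation of $h^i(\mathcal O_{X_d})$ to a sum of twisted cohomology groups on $X$ itself and then to feed this into the generic vanishing theorem of Green and Lazarsfeld. First I would record that $\phi_d : X_d \to X$ is finite and flat (indeed \'etale) of degree $\deg \mu_d = d^{2m}$, since $\mu_d : A \to A$ is an isogeny with kernel $A[d] \cong (\mathbb Z/d)^{2m}$ and these properties are stable under the base change defining $X_d = X \times_{\mu_d} A$. Because $\mu_d$ is finite and flat, $\mu_{d*}\mathcal O_A$ is locally free and its formation commutes with the base change along the Albanese map $\alpha : X \to A$, so $\phi_{d*}\mathcal O_{X_d} = \alpha^*\bigl(\mu_{d*}\mathcal O_A\bigr)$. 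Using the standard decomposition $\mu_{d*}\mathcal O_A = \bigoplus_{\tau \in \hat A[d]} P_\tau$ into the degree-zero line bundles indexed by the $d$-torsion of the dual abelian variety $\hat A = \mathrm{Pic}^0(A)$, and the fact that $\alpha^*$ identifies $\hat A$ with $\mathrm{Pic}^0(X)$ and torsion with torsion, the vanishing of higher direct images of the finite map $\phi_d$ yields
$$
h^i(\mathcal O_{X_d}) = \sum_{\tau \in \hat A[d]} h^i\bigl(X,\, \alpha^* P_\tau\bigr),
$$
a sum of $d^{2m}$ terms, each the $i$-th cohomology of a topologically trivial line bundle on $X$.

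Next I would bring in generic vanishing. Consider the cohomological support locus $V^i = \{\, P \in \mathrm{Pic}^0(X) : h^i(X, \mathcal O_X \otimes P) > 0 \,\}$. By Serre duality together with the Green--Lazarsfeld theorem \cite{GL}, one has $\mathrm{codim}_{\mathrm{Pic}^0(X)} V^i \ge \dim a(X) - i$, so for every $i < \dim a(X)$ the locus $V^i$ is a \emph{proper} closed subvariety of $\mathrm{Pic}^0(X)$. Only those $\alpha^* P_\tau$ lying in $V^i$ contribute to the displayed sum, so the problem splits into bounding the number of $d$-torsion points of $\mathrm{Pic}^0(X)$ lying on $V^i$ and bounding each individual summand. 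For the latter, since $P \mapsto h^i(X, \mathcal O_X \otimes P)$ is upper semicontinuous on the projective variety $\mathrm{Pic}^0(X)$, it attains a finite maximum $C$, giving a uniform bound $h^i(\alpha^* P_\tau) \le C$ independent of $\tau$ and of $d$.

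The heart of the argument, and the step I expect to be the main obstacle, is the torsion-point count. Here I would invoke the structure theorem for cohomological support loci (Green--Lazarsfeld, Simpson, Arapura): $V^i$ is a finite union of torsion translates $t_j + B_j$ of abelian subvarieties $B_j \subseteq \mathrm{Pic}^0(X)$, and since $V^i$ is proper each satisfies $\dim B_j \le m - 1$. On each such translate the set of $d$-torsion points is either empty or a coset of $B_j[d]$, hence has at most $d^{2\dim B_j} \le d^{2(m-1)}$ elements; summing over the finitely many components gives $\#\{\tau \in \hat A[d] : \alpha^* P_\tau \in V^i\} = O(d^{2m-2})$. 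This rigidity is exactly the crucial input: a general proper subvariety of an abelian variety could in principle meet the torsion subgroup far too densely, and it is only the torsion-translate-of-subtorus description that forces the count to drop by the decisive factor $d^2$.

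Combining the pieces, I obtain $h^i(\mathcal O_{X_d}) \le C \cdot O(d^{2m-2}) = O(d^{2m-2})$, so that $h^i(\mathcal O_{X_d})/d^{2m} = O(d^{-2}) \to 0$ as $d \to \infty$, which is the assertion. A final technical point to settle is that $X$ is only assumed normal and minimal; being Gorenstein with canonical singularities it has rational singularities, so strictly speaking the Green--Lazarsfeld machinery and the identification $\mathrm{Pic}^0(X) \cong \hat A$ should be applied on a resolution $\widetilde X \to X$. Since $h^i(\mathcal O_{\,\cdot\,})$ and $\mathrm{Pic}^0$ are preserved under such a resolution for varieties with rational singularities and the Albanese map factors through it, this introduces no essential difficulty, and the limit statement descends to $X$ itself.
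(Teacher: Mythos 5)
Your argument is correct and is essentially the one the paper relies on: the paper reduces to the smooth case via rational singularities (as in your final paragraph, except that the relevant hypothesis is minimality, which gives terminal hence rational singularities --- Gorenstein-ness is neither assumed nor needed) and then simply cites \cite[Theorem 4.1]{Zh}, whose proof is exactly the decomposition $\phi_{d*}\mathcal O_{X_d}=\bigoplus_{\tau\in\hat A[d]}\alpha^*P_\tau$ combined with Green--Lazarsfeld generic vanishing and the torsion-translate structure of the loci $V^i$ that you spell out. So your proposal correctly supplies the details that the paper outsources to its reference.
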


\begin{proof}
Since the Albanese lifting is \'etale, $X_d$ is also minimal. Furthermore, $X_d$ has only terminal singularities, which are rational. Hence it suffices to assume that $X$ is smooth. In this case, the result is just \cite[Theorem 4.1]{Zh}. Also see \cite[Remark 1.4]{HK} for the generic vanishing theorem for singular varieties with rational singularities in characteristic $0$.
\end{proof}

Go back to the 3-fold case.
Now assume that $X$ has Albanese dimension two.
Let $X_d$ be the $d$-th Albanese lifting of $X$. We have the following diagram:
$$
\xymatrix{X_d \ar[r]^{\phi_d} \ar[d]_{\alpha_d} & X \ar[d]^{\alpha={\rm Alb}_X} \\
A \ar[r]^{\mu_d} & A}
$$
Let $m=h^1(\mathcal O_X)$. We have
\begin{eqnarray*}
h^0(K_{X_d}) & \ge & \chi(\omega_{X_d}) + h^2(\mathcal O_{X_d})- h^1(\mathcal O_{X_d}) + 1 \\
& \ge & d^{2m} \chi(\omega_X) - h^1(\mathcal O_{X_d}).
\end{eqnarray*}
By Theorem \ref{albanese}, it follows that
$$
h^0(K_{X_d}) \ge d^{2m} \chi(\omega_X) + o(d^{2m}).
$$
Note that in order to prove Theorem \ref{3fold}, we can assume that $\chi(\omega_X)>0$. Thus one can find an integer $d>0$ such that $h^0(K_{X_d}) > 0$. Also note that Theorem \ref{3fold} is true up to \'etale covers. Without loss of generality, let us assume that $h^0(K_X) > 0$.

Let $X \xrightarrow{g_0} Y \xrightarrow{h_0} A$ (resp. $X_d \xrightarrow{g_d} Y_d \xrightarrow{h_d} A$) be the Stein factorization of $\alpha$ (resp. $\alpha_d$).
Let $H$ be a sufficiently ample line bundle on $A$ and $L_d=h^*_d H$ for all $d$. Write
$B_d=g^*_d L_d$ for all $d$.
Then we have \cite[Chapter 2, Proposition 3.5]{BL}
$$
d^2 B_d \sim_{\rm num} \phi^*_d B_0.
$$

Resume the notations in previous section. We can write
$$
K_{X_d} = \phi^*_d K_X =\sum_{i=1}^{I_0} \phi^*_d H_i + \phi^*_d V.
$$
By pulling back from $X$ to $X_d$, one can check that
$$
(kg^*_d B_0 - K_{X_d})|_{\phi^*_d H_i}, \quad k \phi^*_d B_0 - \phi^*_d V
$$
are both pseudo-effective. Apply the above numerical equivalence and we get
$$
(k d^2  B_d - K_{X_d})|_{\phi^*_d H_i}, \quad k d^2 B_d - \phi^*_d V
$$
are both pseudo-effective. Now by Theorem \ref{relnoether1},
\begin{eqnarray*}
h^0(K_{X_d}) & \le & \frac {K^3_{X_d}}4 + \frac {3K^2_{X_d}B_d + 2 K_{X_d} B^2_d}4 + 2(2k d^2 + 1) \\
& = & \frac{d^{2m} K^2_X}4 + \frac{3d^{2m-2}K^2_X B_0 + 2d^{2m-4}K_X B^2_0} 4 + 2(2k d^2 + 1).
\end{eqnarray*}
Let $d \to \infty$. We can prove that $K^3_X \ge 4 \chi(\omega_X)$.

When the Albanese fiber is hyperelliptic of genus $g \ge 6$, using the same approach as above and by Theorem \ref{essential}, we can prove
$$
h^0(K_{X_d}) \le \frac{d^{2m} K^2_X}6 + \frac{3d^{2m-2}K^2_X B_0 + 2d^{2m-4}K_X B^2_0} 6 + \frac{4g-4}3 (2k d^2 + 1).
$$
Let $d \to \infty$ and we will have $K^3_X \ge 6 \chi(\omega_X)$.

Now we consider the case when the Albanese fiber is not hyperelliptic.
Note that $|L_d|$ is base point free on $Y_d$. Hence by the above numerical equivalence, we have
$$
d^{2m-4}L^2_0 = L^2_d \ge h^0(L_d) - 2 = h^0(B_d) - 2,
$$
i.e., $h^0(B_d) \sim O(d^{2m-4})$. Therefore,
up to \'etale covers, we can assume that $h^0(K_X) > h^0(B_0)$. In particular, it implies that the Albanese fibration factors through the canonical map of $X$.

We claim that \textit{$X$ can not have hyperelliptic pencils}.
Otherwise, suppose there is a hyperelliptic pencil on $X$. We would have the following two possibilities: either this pencil is contracted by $\phi_{K_X}$ or not. If it is contracted by $\phi_{K_X}$, it would also be contracted by the Albanese map, which is impossible since the Albanese pencil is nonhyperelliptic. The second case is still impossible because if this pencil is not contracted by $\phi_{K_X}$, then its image under $\phi_{K_X}$ is a $\mathbb P^1$ pencil. Since the Albanese image of $X$ can not have any $\mathbb P^1$ pencil, by the factorization of the Albanese map, this pencil has to be contracted by the Albanese map. It contradicts with our assumption again.

Similar to the above claim, we can prove that $X_d$ has no hyperelliptic pencil. Thus by Theorem \ref{relnoether2}, we have
$$
h^0(K_{X_d}) \le \frac{d^{2m} K^2_X}6 + \frac{3d^{2m-2}K^2_X B_0 + 2d^{2m-4}K_X B^2_0} 6 + \frac 73 (2k d^2 + 1).
$$
Similar as before, we can get $K^3_X \ge 6 \chi(\omega_X)$ by letting $d \to \infty$.

\begin{remark}
In fact, if $X$ is $\mathbb Q$-Gorenstein, the same approach still works. The only difference is that in this case $K_X$ is a $\mathbb Q$-linear combination of prime divisors. This does not affect the whole proof. However, if $X$ is $\mathbb Q$-Gorenstein but not Gorenstein,
$\chi(\omega_X)$ might be negative.
\end{remark}

\begin{example} \label{example}
Fix an elliptic curve $E$. Take two curves $C_i$ $(i=1, 2)$ with genus $g_i \ge 2$ and involutions $\tau_i$ such that
$C_1 / \langle \tau_1 \rangle = E$ and $C_2 / \langle \tau_2 \rangle = \mathbb P^1$. Let the 3-fold $X$ be the quotient
of $C_1 \times C_1 \times C_2$ by the diagonal involution. Hence $X$ is $\mathbb Q$-Gorenstein but not Gorenstein. Let
$\alpha: X \to E \times E \times \mathbb P^1$ be the induced cover. Then
\begin{eqnarray*}
\alpha_* \mathcal O_X & = & \mathcal O_E \boxtimes \mathcal O_E \boxtimes \mathcal O_{\mathbb P^1} \oplus
L_1 \boxtimes L_1 \boxtimes \mathcal O_{\mathbb P^1}  \oplus \\
& & \mathcal O_E \boxtimes L_1 \boxtimes L_2 \oplus
L_1 \boxtimes \mathcal O_E \boxtimes L_2
\end{eqnarray*}
where $L_1$ and $L_2$ are determined by the branch locus of $C_1 \to E$ and $C_2 \to \mathbb P^1$.
It is easy to see that $\chi(\mathcal O_X) > 0$. Hence $\chi(\omega_X) < 0$. It is also easy to see that
$X$ has Albanese dimension two.
\end{example}

\section{Proof of Theorem \ref{alb1}}
In this section, we will prove Theorem \ref{alb1}.
We have the following relative Noether inequality for fibered 3-folds.

\begin{theorem} \label{relativenoether3fold1}
Let $X$ be a Gorenstein minimal 3-fold of general type, $Y$ a smooth curve and
$f: X \to Y$ is a fibration with a smooth general fiber $F$. Then
$$
h^0(\omega_{X/Y}) \le \left(\frac 14 + \frac{1}{K^2_F} \right) \omega^3_{X/Y} + \frac {K^2_F + 4}{2}
$$
except for the case when $K^2_F=1$, $p_g(F)=2$ and $q(F)=0$.
\end{theorem}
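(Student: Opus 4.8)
The plan is to feed the relative canonical bundle directly into the decomposition machinery of Section~3. First I would dispose of the trivial case: if $h^0(\omega_{X/Y})=0$ the left-hand side vanishes, while the right-hand side is positive because $\omega_{X/Y}$ is nef (so $\omega^3_{X/Y}\ge 0$) and $\tfrac{K^2_F+4}{2}>0$; hence I may assume $h^0(\omega_{X/Y})>0$. I would then take $L_0=P_0=\omega_{X/Y}$ as the nef and effective input to Theorem~\ref{decomposition} (nefness of $\omega_{X/Y}$ for a minimal $X$ fibered over a curve is standard, via weak positivity of $f_*\omega_{X/Y}^{\otimes m}$ together with relative base-point-freeness). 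Since $F$ is a fiber and $K_X$ is nef, $F$ is its own minimal model and $L_0|_F=P_0|_F=K_F$; thus in the notation of Proposition~\ref{algcase1} one has $r_0=p_g(F)$, $d_0=K_F^2=K^2_F$ and $P_0L_0^2=\omega^3_{X/Y}$.

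With this setup, Proposition~\ref{algcase1}(1) applied at $j=N$, together with $h^0(L'_N)=0$ from Theorem~\ref{decomposition}(3), gives $h^0(\omega_{X/Y})\le\sum_{i=0}^N a_ir_i$. The next step is to compare each $r_i$ with $d_i$ using the surface results of Section~4. For $i=0$ the classical Noether inequality $K^2_F\ge 2p_g(F)-4$ yields $r_0\le\tfrac12 d_0+2$, while for $i\ge 1$ I would invoke Theorem~\ref{noether2} on the (blown-up) fiber $F_i$, where $P_i|_{F_i}=\sigma^*K_{F'}$, to obtain the \emph{sharper} bound $r_i\le\tfrac12 d_i+1$. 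Splitting off the $i=0$ term gives
$$h^0(\omega_{X/Y})\le\tfrac12\sum_{i=0}^N a_id_i+2a_0+\sum_{i=1}^N a_i.$$

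The heart of the estimate is then purely combinatorial, and the point is a cancellation that makes the coefficient $1/K^2_F$ (rather than the cruder $2/K^2_F$ of Theorem~\ref{relnoether1}) attainable. Lemma~\ref{algsumai} for $n=3$ reads $\omega^3_{X/Y}\ge 2d_0(a_0-1)+d_0\sum_{i=1}^N a_i$, so $2a_0+\sum_{i=1}^N a_i\le \omega^3_{X/Y}/d_0+2$, in which the troublesome term $2a_0$ cancels exactly. Combining this with $\sum_{i=0}^N a_id_i\le\tfrac12\omega^3_{X/Y}+d_0$, obtained from Proposition~\ref{algcase1}(2) after using $d_{i-1}\ge d_i$, and recalling $d_0=K^2_F$, I arrive at
$$h^0(\omega_{X/Y})\le\left(\tfrac14+\tfrac{1}{K^2_F}\right)\omega^3_{X/Y}+\tfrac{K^2_F}{2}+2,$$
which is the claimed inequality since $\tfrac{K^2_F}{2}+2=\tfrac{K^2_F+4}{2}$.

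I expect the main obstacle to be justifying $r_i\le\tfrac12 d_i+1$ for \emph{every} $i\ge 1$, since a single index escaping to $r_i\le\tfrac12 d_i+2$ would, after the cancellation above, contribute a stray term of order $\omega^3_{X/Y}/d_0$ and destroy the coefficient. In the non-pencil case Theorem~\ref{noether2} requires $h^0(L_i|_{F_i})<p_g(F)$, and in the pencil case it fails precisely for the surface with $K^2_F=1$, $p_g(F)=2$, $q(F)=0$, which is exactly the excluded case. To secure the strict inequality $r_i<p_g(F)$ I would use Remark~\ref{r0r1}: once $|K_F|=|L_0|_F|$ is base point free it forces $r_1<r_0=p_g(F)$, whence $r_i\le r_1<p_g(F)$ for all $i\ge 1$. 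The residual difficulty is thus the short list of minimal surfaces of general type whose canonical system is not base point free; for these one checks directly that either the comparison $r_i\le\tfrac12 d_i+1$ still holds or $F$ coincides with the stated exceptional surface.
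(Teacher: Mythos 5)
Your setup and the generic run of the argument coincide with the paper's: the same choice $L_0=P_0=\omega_{X/Y}$ for the fibration $f$ itself, the same application of Proposition \ref{algcase1} and Lemma \ref{algsumai}, and the same arithmetic yielding $\left(\frac 14+\frac 1{K^2_F}\right)\omega^3_{X/Y}+\frac{K^2_F+4}{2}$. The gap is exactly at the point you label a ``residual difficulty,'' and it is not residual --- it is the bulk of the paper's proof. When $|K_F|$ is not base point free one may have $r_1=r_0=p_g(F)$, and then neither case of Theorem \ref{noether2} applies to $L_1|_{F_1}$ (the non-pencil case requires $h^0<p_g$). The best generally available bound is Theorem \ref{noether}, giving only $(L_1|_{F_1})^2\ge 2r_1-4$ and hence $r_1\le\frac 12 d_1+2$, which, as you yourself observe, destroys the coefficient. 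Your proposed dichotomy --- either $r_1\le\frac 12 d_1+1$ holds or $F$ is the $(K^2_F,p_g,q)=(1,2,0)$ surface --- is not established and is in fact false: the paper's analysis (Hodge index plus $K^2_F\ge 2r_0-3$ for non-free $|K_F|$) reduces the bad situation to $d_1=2r_1-3$ with $K^2_F=d_1$ or $K^2_F=d_1+1$, and the subcase $K^2_F=d_1=2p_g(F)-3$ is a genuine possibility not excluded by the theorem's hypothesis. Moreover, surfaces of general type with non-free canonical system do not form a short list amenable to a case check, and the failure involves the system $|L_1|_{F_1}|$ coming from the threefold, not just invariants of $F$.

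To close the gap one needs the two additional arguments the paper supplies. In the surviving case $K^2_F=d_1=2r_1-3$, one replaces $L_0$ by the movable part $M$ of $|\pi^*\omega_{X/Y}|$ after a blow-up $\pi$, keeps $P=\pi^*\omega_{X/Y}$, and reruns the entire decomposition: since $|M|$ is base point free, Remark \ref{r0r1} now forces $r_{i-1}>r_i$ for all $i>0$ and the estimates go through; but one must then prove $d_M=(\pi^*K_F)(M|_{F'})=K^2_F$ (via $\pi_0(L_1)\le\pi(M)$) to recover the stated constant $\frac 1{K^2_F}$. The other subcase $K^2_F=d_1+1$ has to be ruled out separately, which the paper does using the $2$-connectedness of canonical divisors on $F$. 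Without these steps your argument proves the inequality only when $|K_F|$ is base point free.
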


\begin{proof}
We know in this case, $\omega_{X/Y}$ is nef. Resume the notations in Theorem \ref{decomposition} and Proposition \ref{algcase1}. Denote
$L_0=P=\omega_{X/Y}$. It follows that
\begin{eqnarray*}
h^0(L_0) & \le & \sum_{i=0}^N a_{i} r_{i}; \\
L^3_0 & \ge & 2a_id_i + \sum_{i=1}^N a_{i} (d_{i-1}+d_i) - 2d_0.
\end{eqnarray*}
Hence
$$
h^0(L_0) - \frac{L^3_0}{4} \le \frac{d_0}2 + \left(r_0-\frac 12 d_0\right)a_0 + \sum_{i=1}^N \left(r_i-\frac 12 d_i-\frac{d_{i-1}-d_i}4\right) a_i.
$$

By Theorem \ref{noether}, \ref{noether2} and Remark \ref{r0r1}, we can always get
$$
r_0 \le \frac 12 d_0 + 2, \quad r_i \le \frac 12 d_i + \frac{d_{i-1}-d_i}4 + 1 \quad (i>0)
$$
except when $r_0=r_1$ and $d_1 \le 2r_1 - 3$.

If we are not in the exceptional case, then
$$
h^0(L_0) - \frac{L^3_0}{4} \le \frac{d_0}2 + 2a_0 + \sum_{i=1}^N a_i.
$$
By Lemma \ref{algsumai},
$$
2 a_0 + \sum_{i=1}^N a_i \le \frac{L^3_0}{d_0} + 2 \le \frac{L^3_0}{d_0} + 2.
$$
Hence
$$
h^0(L_0) \le \left(\frac 14 + \frac{1}{d_0} \right) L^3_0 + \frac {d_0 + 4}{2}.
$$

Finally, let us consider the exceptional case. Recall that we have $r_0=r_1$ and $d_1 \le 2r_1 - 3$.  By Theorem \ref{noether2}, $|L_1|_{F_1}|$ defines a generically finite morphism. Thus from Theorem \ref{noether}, we know that
$$
(L_1|_{F_1})^2 \ge 2r_1 - 4.
$$
Note that $\pi^*_0 K_F > L_1|_{F_1}$. By the Hodge index theorem,
$$
(2r_1 - 3)^2 \ge d^2_1 = ((\pi^*_0 K_F)(L_1|_{F_1}))^2 > (L_1|_{F_1})^2 K^2_F,
$$
which implies that $K^2_F \le 2r_0 - 2$.
On the other hand, $|K_F|$ is not base point free, thus $K^2_F \ge 2r_0 - 3$. It also implies that $d_1 = 2r_1 - 3$ and $(L_1|_{F_1})^2 = 2r_1 - 4$. Therefore, we have two possibilities:
\begin{itemize}
\item [(1)] $d_1=2r_1-3$ and $K^2_F = d_1$;
\item [(2)] $d_1=2r_1-3$ and $K^2_F = d_1+1$.
\end{itemize}

In Case (1), choose a blow up $\pi: X' \to X$ such that the movable part $|M|$ of $|\pi^* \omega_{X/Y}|$ is base point free. Write $F'=\pi^*F$. Abusing the notation, we denote the new $L_0=M$ and $P=\pi^* \omega_{X/Y}$. Then we will have a new sequence of $r_i$'s and $d_i$'s. Under this new setting, $r_{i-1}>r_i$ for each $i>0$. Running the same process as before, we can get
$$
h^0(\omega_{X/Y}) = h^0(M) \le \left(\frac 14 + \frac 1{d_{M}}\right)\omega^3_{X/Y} + \frac{d_{M}+4}{2},
$$
where $d_{M} = (\pi^*K_F) (M|_{F'})$. We need to show that $d_{M} = K^2_F$. In fact, $d_{M} = K_F (\pi(M)|_F) \le K^2_F$. On the other hand, since $L_1$ comes from the movable part of $|\omega_{X/Y} - a_0 F|$, it implies that $\pi_0 (L_1) \le \pi(M)$. In particular, it means that
$$
K^2_F=d_1 = K_F (\pi_0 (L_1)|_F) \le K_F (\pi(M)|_F) = d_{M}.
$$
Hence $d_{M} = K^2_F$.

We claim that Case (2) does not occur. Write
$$
|K_F| = |V| + Z,
$$
where $|V|$ is the movable part of $|K_F|$ and $Z \ge 0$. Since $r_1=r_0$, we see that $\pi_0(L_1)|_{F} = V$ and so $d_1 = K_F V$. Since $K^2_F > d_1$, it means that $K_FZ>0$ and $Z > 0$. By the 2-connectedness of the canonical divisor, we have $VZ \ge 2$. Note that $V^2 \ge 2r_0-4$. It implies that
$$
2r_0 - 2 = K^2_F = V^2 + VZ + K_FZ \ge 2 r_0 - 2 + K_FZ,
$$
i.e., $K_FZ = 0$ and $K_FV = K^2_F$. It contradicts with $K^2_F > d_1$.
\end{proof}

\begin{remark}
Recall in \cite{YZ2} that if $f: S \to C$ is a relative minimal fibered surface of genus $g \ge 2$. One has
$$
h^0(\omega_{S/C}) \le \frac{g}{4g-4}\omega^2_{S/C}+g = \left(\frac 14 + \frac 1{2\deg \omega_F} \right) \omega^2_{S/C} + \frac{\deg \omega_F + 2}{2}.
$$
Theorem \ref{relativenoether3fold1} is a natural generalization of the above result. Moreover, based on several results about linear series on surfaces in positive characteristic (c.f. \cite{Li1, Li2}), our method can be also used study the fibered 3-folds in positive characteristic.

It is also interesting to compare Theorem \ref{relativenoether3fold1} with the slope inequalities proved by Ohno \cite{Oh} and Barja \cite{Ba1} for fibered 3-folds over curves. In their papers, Xiao's method on the Harder-Narasimhan filtration \cite{Xi} plays a very important role and it works only in characteristic zero.

\end{remark}

By the same technique, we can also show the following Noether type inequality for fibered 3-folds over curves in order for independent interests.
\begin{theorem}
Let $X$ be a Gorenstein and minimal fibered 3-fold of general type fibered over a smooth curve $Y$
with a smooth general fiber $F$. Then
$$
p_g(X) \le \left(\frac 14 + \frac{1}{K^2_F} \right) K^3_{X} + \frac {K^2_F + 4}{2}
$$
except for the case when $K^2_F=1$, $p_g(F)=2$ and $q(F)=0$.
\end{theorem}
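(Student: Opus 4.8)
The plan is to run the proof of Theorem~\ref{relativenoether3fold1} essentially verbatim, replacing the relative canonical bundle $\omega_{X/Y}$ by the absolute canonical bundle $K_X$ everywhere. What makes this substitution harmless is that the two bundles agree on a general fiber: since $F$ is a fiber of a fibration over a smooth curve, its normal bundle is trivial, so by adjunction $K_X|_F = \omega_{X/Y}|_F = K_F$. Hence every fiberwise quantity in the argument is unchanged, and only the two \emph{global} numbers are affected, namely $h^0(L_0)=p_g(X)$ and $L_0^3=K^3_X$ in place of $h^0(\omega_{X/Y})$ and $\omega^3_{X/Y}$.

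First I would dispose of the trivial case: if $p_g(X)=0$ there is nothing to prove, since $K^3_X>0$ and $K^2_F>0$ make the right-hand side positive. So assume $p_g(X)>0$, so that $K_X$ is a nef (by minimality of $X$) and effective line bundle. Setting $L_0=P=K_X$, I would apply Theorem~\ref{decomposition} and Proposition~\ref{algcase1} to the fibration $f$, obtaining
$$
h^0(L_0) \le \sum_{i=0}^N a_i r_i, \qquad L_0^3 \ge 2a_0 d_0 + \sum_{i=1}^N a_i(d_{i-1}+d_i) - 2d_0.
$$
Because $P_0|_F = L_0|_F = K_F$, we have $d_0 = K^2_F$, and the comparison of $r_i$ with $d_i$ supplied by Theorem~\ref{noether}, Theorem~\ref{noether2} and Remark~\ref{r0r1} is \emph{identical} to the one used in Theorem~\ref{relativenoether3fold1}; that is, away from the exceptional configuration $r_0=r_1$, $d_1 \le 2r_1-3$ one has $r_0 \le \tfrac12 d_0 + 2$ and $r_i \le \tfrac12 d_i + \tfrac14(d_{i-1}-d_i) + 1$ for $i>0$.

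Combining these estimates with Lemma~\ref{algsumai} exactly as before yields
$$
h^0(L_0) \le \left(\frac14 + \frac1{d_0}\right) L_0^3 + \frac{d_0+4}{2},
$$
which, upon substituting $h^0(L_0)=p_g(X)$, $L_0^3=K^3_X$ and $d_0=K^2_F$, is precisely the asserted inequality. I expect no genuine obstacle in this main computation: the only thing to check is that the passage from $\omega_{X/Y}$ to $K_X$ disturbs nothing on fibers, which is guaranteed by $K_X|_F=K_F$, while $K_X$ is manifestly nef and effective.

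Finally I would treat the exceptional configuration, and this is the one place meriting care, although its resolution also transfers unchanged since it is carried out entirely on the surface $F$. As in Theorem~\ref{relativenoether3fold1}, the Hodge index theorem together with Theorem~\ref{noether} forces $K^2_F \in \{2r_0-3,\, 2r_0-2\}$, giving two possibilities. In Case~(1) I would pass, after a blow-up $\pi$, to the movable part $M$ of $|\pi^*K_X|$ and verify that the relevant fiber degree $d_M=(\pi^*K_F)(M|_{F'})$ equals $K^2_F$; this uses $\pi^*K_X|_{F'}=\pi^*K_F$, so the verification is word-for-word the same as before. Case~(2) is excluded by the $2$-connectedness of the canonical divisor of $F$. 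The upshot is that the bound holds except when $K^2_F=1$, $p_g(F)=2$ and $q(F)=0$, which is the stated exception.
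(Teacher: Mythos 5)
Your proposal is correct and is exactly the paper's own argument: the paper's proof of this theorem consists of the single remark that one replaces $\omega_{X/Y}$ by $K_X$ in the proof of Theorem \ref{relativenoether3fold1}, which is precisely what you carry out (and justify, via $K_X|_F=K_F$ and the nefness of $K_X$, in more detail than the paper does).
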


\begin{proof}
We only need to replace $\omega_{X/Y}$ in the proof of Theorem \ref{relativenoether3fold1} by $K_X$.
\end{proof}

\textit{From now on}, we assume that $f: X \to Y$ is the induced fibration by the Albanese map from $X$ to a projective curve $Y$ with a smooth
general fiber $F$. One has $g(Y)=h^1(\mathcal O_X)$. We put the following remark before the proof.
\begin{remark} \label{observ}
Suppose we can prove that
$$
K^3_X \ge a_F \chi(\omega_X) + b_F,
$$
where $a_F$ and $b_F$ only depend on the numerical invariants of $F$. Then we can get
$$
K^3_X \ge a_F \chi(\omega_X).
$$
This philosophy has been applied by Bombieri and Horikawa in \cite{Bo, Ho} and also \cite{YZ2}. In fact, since $g(Y)=h^1(\mathcal O_X) > 0$.
Using the degree $d$ \'etale base change $\pi: Y' \to Y$, we can get a new fibration $f': X' \to Y'$, where $X'=X \times_Y Y'$.
We still have
$$
K^3_{X'} \ge a_F \chi(\omega_{X'}) + b_F.
$$
Note that
$$
K^3_{X'} = d K^3_X, \quad \chi(\omega_{X'})=d \chi(\omega_X).
$$
The conclusion will follow after we let $d \to \infty$. This remark will also be used in the next section.
\end{remark}

\begin{prop} \label{pg0}
If $p_g(F)=0$, then
$$
K^3_X \ge 6 \chi(\omega_X).
$$
\end{prop}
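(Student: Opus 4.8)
The plan is to exploit the two strong constraints forced by $p_g(F)=0$: on the cohomological side, the vanishing of all canonical forms pins $\chi(\omega_X)$ from above by the irregularity; on the geometric side, the nefness of $\omega_{X/Y}$ supplies the correct coefficient in the intersection computation. These two bounds then meet with room to spare.

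First I would record that $p_g(F)=0$ forces $p_g(X)=0$. Indeed $f_*\omega_{X/Y}$ is a torsion-free sheaf on $Y$ of rank $h^0(F,\omega_F)=p_g(F)=0$, hence $f_*\omega_{X/Y}=0$; therefore $f_*\omega_X=f_*\omega_{X/Y}\otimes\omega_Y=0$ and $p_g(X)=h^0(Y,f_*\omega_X)=0$. I note in passing that $K_F=K_X|_F$ is nef and big, so $F$ is a minimal surface of general type and $K^2_F\ge 1$; also $\chi(\mathcal O_F)=1-q(F)+p_g(F)\ge 1$ forces $q(F)=0$, although this last fact will not be needed. Next I would bound $\chi(\omega_X)$ from above using Serre duality $h^i(\omega_X)=h^{3-i}(\mathcal O_X)$, which gives
\[
\chi(\omega_X)=p_g(X)-h^2(\mathcal O_X)+h^1(\mathcal O_X)-1=h^1(\mathcal O_X)-1-h^2(\mathcal O_X)\le h^1(\mathcal O_X)-1=g(Y)-1,
\]
using $p_g(X)=0$, $h^2(\mathcal O_X)\ge 0$ and $g(Y)=h^1(\mathcal O_X)$. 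Writing $g=g(Y)$, note $g\ge 1$ since $X$ is irregular, so $g-1\ge 0$.

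The geometric side is a direct intersection computation. Writing $K_X=K_{X/Y}+f^*K_Y$ and expanding $K^3_X$, the terms containing $(f^*K_Y)^2$ are supported on fibers and vanish since a general fiber has zero self-intersection, leaving
\[
K^3_X=\omega^3_{X/Y}+3K^2_{X/Y}(f^*K_Y)=\omega^3_{X/Y}+3(2g-2)(K_{X/Y}|_F)^2=\omega^3_{X/Y}+6(g-1)K^2_F.
\]
As recorded in the proof of Theorem \ref{relativenoether3fold1}, $\omega_{X/Y}$ is nef, so $\omega^3_{X/Y}\ge 0$. Combining this with $K^2_F\ge 1$ and the bound on $\chi(\omega_X)$ above yields
\[
K^3_X\ge 6(g-1)K^2_F\ge 6(g-1)\ge 6\chi(\omega_X),
\]
which is the assertion.

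The argument is short, and the only point demanding care is the coefficient. Bounding $K^2_XK_{X/Y}\ge 0$ naively produces only $K^3_X\ge 2(g-1)K^2_F$, a factor of three too weak; the full coefficient $6(g-1)$, and hence the whole proposition, rests on the cross term $3K^2_{X/Y}(f^*K_Y)$ together with $\omega^3_{X/Y}\ge 0$. Thus the single external input I would invoke is the nefness of the relative canonical bundle, which is exactly what makes the coefficient land at $6$ rather than at the trivial value.
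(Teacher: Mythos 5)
Your proof is correct and follows essentially the same route as the paper's: the paper likewise notes $p_g(X)=0$, uses the nefness of $\omega_{X/Y}$ to get $\omega^3_{X/Y}\ge 0$, expands $K^3_X=\omega^3_{X/Y}+6(g(Y)-1)K^2_F$, and concludes via $K^2_F\ge 1$ and $\chi(\omega_X)\le g(Y)-1$. Your write-up merely supplies the details (e.g.\ $f_*\omega_{X/Y}=0$ and the Serre-duality bound on $\chi(\omega_X)$) that the paper leaves implicit.
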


\begin{proof}
In this situation, $p_g(X)=0$. From the nefness of $\omega_{X/Y}$, we get $\omega^3_{X/Y} \ge 0$. Since $\omega_{X/Y}=K_X - (2g(Y)-2)F$, we get
$$
K^3_X \ge 6(g(Y)-1) K^2_F \ge 6(g(Y)-1) \ge 6 \chi(\omega_X).
$$
It finishes the proof.
\end{proof}

\begin{prop} \label{rough2}
If $p_g(F) > 0$ and $(p_g(F), K^2_F) \ne (2, 1)$, then
$$
\chi(\omega_X) \le  \left(\frac 14 + \frac{1}{K^2_F} \right) K^3_{X}.
$$
\end{prop}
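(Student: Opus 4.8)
The statement compares $\chi(\omega_X)$ against $K_X^3$ for the Albanese fibration $f:X\to Y$ over a curve. Since $Y$ is a curve, $g(Y)=h^1(\mathcal O_X)>0$, and the relative dualizing sheaf $\omega_{X/Y}$ satisfies $\omega_{X/Y}=K_X-(2g(Y)-2)F$. The natural first move is to relate $\chi(\omega_X)$ to $h^0(\omega_{X/Y})$, and then apply the relative Noether inequality of Theorem \ref{relativenoether3fold1}, which (since $(p_g(F),K_F^2)\neq(2,1)$) gives the clean bound
$$
h^0(\omega_{X/Y}) \le \left(\frac 14 + \frac{1}{K^2_F}\right)\omega^3_{X/Y} + \frac{K^2_F+4}{2}.
$$
The strategy will be to bound $\chi(\omega_X)$ from above by $h^0(\omega_{X/Y})$ (up to controlled error terms), bound $\omega^3_{X/Y}$ by $K_X^3$, and then invoke the limiting argument of Remark \ref{observ} to absorb the additive constants.

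\textbf{Key steps.} First I would express $\chi(\omega_X)$ in terms of the fibration. The Leray/relative data gives $\chi(\omega_X)$ in terms of $f_*\omega_{X/Y}$ and the base, and a standard computation relates $\chi(\omega_X)=\deg f_*\omega_{X/Y}+(g(Y)-1)\chi(\omega_F)$ (for $n=3$ one has the analogous expression with $R^1f_*$ and $R^2f_*$ contributing). The point is to show $\chi(\omega_X)\le h^0(\omega_{X/Y})+(\text{terms bounded independently of }K_X^3)$, or more directly to pass to an étale base change $Y'\to Y$ of large degree $d$ as in Remark \ref{observ}: on $X'=X\times_Y Y'$ the quantities $K^3$, $\chi(\omega)$, and $\omega^3_{X/Y}$ all scale linearly in $d$, while $h^0(\omega_{X'/Y'})\ge\chi(\omega_{X'/Y'})$ grows, and the constant $\tfrac{K_F^2+4}{2}$ does not scale. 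So after dividing by $d$ and letting $d\to\infty$, the additive constant and the gap between $h^0$ and $\chi$ wash out. Second, I would verify $\omega_{X/Y}^3\le K_X^3$: since $\omega_{X/Y}=K_X-(2g(Y)-2)F$ with $2g(Y)-2\ge 0$ and $F$ nef, expanding $K_X^3-\omega_{X/Y}^3$ as a sum of intersection numbers with nonnegative coefficients (using $F^2=0$ as $F$ is a fiber of a map to a curve) shows the difference is $\ge 0$. Combining, $\chi(\omega_X)\le h^0(\omega_{X/Y})\le(\tfrac14+\tfrac{1}{K_F^2})\omega^3_{X/Y}\le(\tfrac14+\tfrac1{K_F^2})K_X^3$ in the limit.

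\textbf{Main obstacle.} The delicate point is the first inequality, $\chi(\omega_X)\le h^0(\omega_{X/Y})$ in the limit. On the nose, $\chi(\omega_X)$ and $h^0(\omega_{X/Y})$ differ by $h^1,h^2$ contributions and by the difference between $\omega_X$ and $\omega_{X/Y}$, so I cannot expect them equal without the étale trick. The cleanest route is: run the étale base change $\pi_d:Y'\to Y$, observe $\chi(\omega_{X'})=d\,\chi(\omega_X)$ and $\omega^3_{X'/Y'}=d\,\omega^3_{X/Y}\le d\,K_X^3=K_{X'}^3$, and use $h^0(\omega_{X'/Y'})\ge\chi(\omega_{X'/Y'})-h^1(\omega_{X'/Y'})+\cdots$ together with the fact that $\deg\pi_d$ makes the lower-cohomology and boundary terms grow sublinearly or get absorbed, so that dividing by $d$ and letting $d\to\infty$ yields $\chi(\omega_X)\le(\tfrac14+\tfrac1{K_F^2})K_X^3$ exactly. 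I expect the bookkeeping on exactly which cohomological terms survive the limit—i.e.\ confirming that only the leading linear-in-$d$ pieces matter and that $h^0(\omega_{X'/Y'})$ indeed dominates $\chi(\omega_{X'})$—to be where the real care is needed, whereas the intersection-number inequality $\omega_{X/Y}^3\le K_X^3$ and the application of Theorem \ref{relativenoether3fold1} are routine.
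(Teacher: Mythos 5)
Your skeleton is the right one (Theorem \ref{relativenoether3fold1} $+$ a lower bound on $h^0(\omega_{X/Y})$ in terms of $\chi(\omega_X)$ $+$ Remark \ref{observ}), and you correctly flag the comparison of $\chi(\omega_X)$ with $h^0(\omega_{X/Y})$ as the delicate point. But your proposed resolution of that point does not work. Under the degree-$d$ \'etale base change the gap between $\chi(\omega_{X'})$ and $h^0(\omega_{X'/Y'})$ does \emph{not} grow sublinearly: it is controlled by $(g(Y')-1)=d\,(g(Y)-1)$, so it scales linearly in $d$, exactly like $K^3_{X'}$ and $\chi(\omega_{X'})$. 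The same is true of the difference $K^3_{X'}-\omega^3_{X'/Y'}=6K^2_F(g(Y')-1)$. Dividing by $d$ and letting $d\to\infty$ therefore kills only the additive constant $\tfrac{K^2_F+4}{2}$; the two linear-in-$d$ terms survive and must be compared against each other. Discarding $K^3_X-\omega^3_{X/Y}\ge 0$ as ``routine'' throws away precisely the resource needed to absorb the gap.

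Concretely, the paper's proof needs two inputs you omit. First, the lower bound
$$
h^0(\omega_{X/Y}) \;\ge\; \chi(\omega_X)-\bigl(\chi(\mathcal O_F)+p_g(F)\bigr)(g(Y)-1)
$$
is not a formal consequence of Leray or of ``$h^0\ge\chi$'' (which is false in general here); it uses the semipositivity of $f_*\omega_{X/Y}$ and $R^1f_*\omega_{X/Y}$ (Koll\'ar) via Ohno's lemmas. Second, after substituting $\omega^3_{X/Y}=K^3_X-6K^2_F(g(Y)-1)$ into Theorem \ref{relativenoether3fold1}, the coefficient of $(g(Y)-1)$ in the resulting upper bound for $\chi(\omega_X)$ is
$$
\bigl(\chi(\mathcal O_F)+p_g(F)\bigr)-6\Bigl(\tfrac{K^2_F}{4}+1\Bigr),
$$
and one must check this is nonpositive so that the term can be dropped before applying Remark \ref{observ}. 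This is a genuine numerical constraint, verified in the paper via the classical Noether inequality $K^2_F\ge 2p_g(F)-4$ for the fiber; without it the linearly growing terms need not have the right sign and the limiting argument fails.
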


\begin{proof}
From Theorem \ref{relativenoether3fold1}, we get
\begin{eqnarray*}
h^0(\omega_{X/Y}) & \le & \left(\frac 14 + \frac{1}{K^2_F} \right) (K^3_{X} - 6K^2_F(g(Y)-1)) + \frac {K^2_F + 4}{2} \\
& = & \left(\frac 14 + \frac{1}{K^2_F} \right) K^3_{X} - 6\left(\frac {K^2_F} 4 + 1 \right) (g(Y)-1) + \frac {K^2_F + 4}{2}.
\end{eqnarray*}

On the other hand, by \cite{Ko1, Ko2}, $f_*\omega_{X/Y}$ and $R^1 f_*\omega_{X/Y}$ are both semipositive. Following from \cite[Lemma 2.4, 2.5]{Oh}, we have
\begin{eqnarray*}
h^0(\omega_{X/Y}) & \ge & \deg f_*\omega_{X/Y} - p_g(F)(g(Y)-1) \\
& \ge & \deg f_*\omega_{X/Y} - \deg R^1 f_*\omega_{X/Y} - p_g(F)(g(Y)-1) \\
& = & \chi(\mathcal \omega_X) - (\chi(\mathcal O_F)+p_g(F)) (g(Y)-1).
\end{eqnarray*}
By applying Remark \ref{observ}, to prove the conclusion, it suffices to prove that
$$
6 \left(\frac {K^2_F} 4 + 1 \right) \ge \chi(\mathcal O_F)+p_g(F).
$$
It is easy to see that this inequality follows from the classical Noether inequality, since
$$
6 \left(\frac {K^2_F} 4 + 1 \right) > K^2_F + 6 \ge 2p_g(F) + 2 > \chi(\mathcal O_F)+p_g(F).
$$
It finishes the proof.
\end{proof}

From the above proposition, we see that Theorem \ref{alb1} holds if $K^2_F \ge 4$. In the following, we will consider the case when $K^2_F \le 3$.

\begin{prop} \label{pg23}
We have
$$
K^3_X \ge 2 \chi(\omega_X)
$$
in the following two cases:
\begin{itemize}
\item [(1)] $p_g(F)=2$, $K^2_F=2, 3$;
\item [(2)] $p_g(F)=3$, $K^2_F =3$.
\end{itemize}
\end{prop}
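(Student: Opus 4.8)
The plan is to re-run the proof of Theorem \ref{relativenoether3fold1} but to push the target coefficient from $\frac14 + \frac1{K^2_F}$ down to $\frac12$, exploiting the fact that in each of the three listed cases one has $p_g(F) \le K^2_F$. I would take $L_0 = P = \omega_{X/Y}$ (which is nef), apply the decomposition of Theorem \ref{decomposition} together with Proposition \ref{algcase1}, and record the two inequalities $h^0(\omega_{X/Y}) \le \sum_{i=0}^N a_i r_i$ and $\omega^3_{X/Y} \ge 2a_0 d_0 + \sum_{i=1}^N a_i(d_{i-1}+d_i) - 2d_0$, where $d_0 = K^2_F$ and $r_0 = p_g(F)$. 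Combining them and using $d_{i-1}\ge d_i$ yields
$$
h^0(\omega_{X/Y}) - \frac12\omega^3_{X/Y} \le a_0(r_0-d_0) + \sum_{i=1}^N a_i\Big(r_i - \frac12 d_{i-1} - \frac12 d_i\Big) + d_0,
$$
so the whole matter reduces to the two comparisons $r_0 \le d_0$ and $2r_i \le d_{i-1}+d_i$ for $i \ge 1$.

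The first comparison $r_0 \le d_0$ is precisely $p_g(F) \le K^2_F$, which holds with equality for $(2,2)$ and $(3,3)$ and strictly for $(2,3)$. For the second I would use $d_i \ge 2r_i - 2$ from Theorem \ref{noether2}, noting that its exceptional triple $(K^2,p_g,q)=(1,2,0)$ never arises here since $K^2_F \ge 2$. By Remark \ref{r0r1} one has $r_0 \ge r_1 > r_2 > \cdots > r_N$, so $r_i < p_g(F)$ for $i \ge 2$ and Theorem \ref{noether2} applies, giving $d_i \ge 2r_i - 2$; together with $d_{i-1}\ge d_i$ this gives $2r_i \le d_{i-1}+d_i$ whenever $r_i \ge 2$, while the case $r_i = 1$ follows from $d_{i-1}\ge 2$. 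The one delicate step is $i=1$ when $r_1 = r_0 = p_g(F)$ and $|L_1|_{F_1}|$ defines a generically finite map: there Theorem \ref{noether2} does not apply directly and Theorem \ref{noether} only gives $d_1 \ge 2r_1 - 4$. Following the exceptional-case analysis already carried out inside Theorem \ref{relativenoether3fold1}, this situation can occur only for $(3,3)$, where it forces $d_1 = 3 = K^2_F = d_0$ and $r_1 = 3$; then $d_0 + d_1 = 6 = 2r_1$ and the comparison still holds, with equality. Hence in all three cases $h^0(\omega_{X/Y}) \le \frac12 \omega^3_{X/Y} + K^2_F$.

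It remains to globalize. Using $\omega^3_{X/Y} = K^3_X - 6K^2_F(g(Y)-1)$ and the semipositivity lower bound $h^0(\omega_{X/Y}) \ge \chi(\omega_X) - (\chi(\mathcal O_F)+p_g(F))(g(Y)-1)$ from the proof of Proposition \ref{rough2}, I obtain
$$
\chi(\omega_X) \le \frac12 K^3_X + K^2_F + \big(\chi(\mathcal O_F)+p_g(F) - 3K^2_F\big)(g(Y)-1).
$$
A direct check gives $\chi(\mathcal O_F)+p_g(F) \le 1 + 2p_g(F) \le 3K^2_F$ in each of the three cases, so the coefficient of the nonnegative quantity $g(Y)-1$ is $\le 0$ and may be discarded. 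This yields $K^3_X \ge 2\chi(\omega_X) - 2K^2_F$; since the error $-2K^2_F$ depends only on $F$, Remark \ref{observ} upgrades it to $K^3_X \ge 2\chi(\omega_X)$.

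The main obstacle is the refined comparison $2r_i \le d_{i-1}+d_i$, and in particular the borderline first step in the $(3,3)$ case where the naive Noether bound degrades but the equality $d_0 + d_1 = 2r_1$ exactly rescues the estimate. Keeping careful track of which surface inequality (Theorem \ref{noether} versus Theorem \ref{noether2}) is available at each index, and confirming that the $(1,2,0)$ exception is never triggered, is where the care is needed.
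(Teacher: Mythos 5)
Your proposal is correct, but it establishes the key step by a genuinely different route from the paper. The paper does not run the decomposition machinery at the coefficient $\frac 12$; it proves $\omega^3_{X/Y} \ge 2h^0(\omega_{X/Y})-6$ directly by a case analysis on $\dim \phi_{\omega_{X/Y}}(X) \in \{1,2,3\}$ (the method of Chen \cite{Ch1} applied to the relative canonical system): since $p_g(F)\le 3 < h^0(\omega_{X/Y})$ one has $h^0(\omega_{X/Y}(-F))>0$, so $f$ factors through $\phi_{\omega_{X/Y}}$, and the three cases are handled respectively by counting fibers, by bounding $K_FC\ge 2$ via the Hodge index theorem, and by Theorem \ref{noether} on a general member of the movable part. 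You instead feed $L_0=P=\omega_{X/Y}$ into Theorem \ref{decomposition} and Proposition \ref{algcase1} and win by the term-by-term comparisons $r_0\le d_0$ and $2r_i\le d_{i-1}+d_i$; the essential input is the coincidence $p_g(F)\le K^2_F$, which is precisely what makes the coefficient $\frac 12$ attainable in these three cases (and explains why the same scheme cannot reach $(p_g(F),K^2_F)=(3,2)$ of Proposition \ref{pg32}, where already $r_0>d_0$). Your treatment of the borderline index $i=1$ checks out: for $p_g(F)=2$ a system with $r_1=2$ is automatically composed with a pencil, so the pencil clause of Theorem \ref{noether2} gives $d_1\ge 2r_1-2$ and the $(1,2,0)$ exception is excluded by $K^2_F\ge 2$; for $(3,3)$ with $r_1=3$ the pencil alternative is vacuous (it would force $d_1\ge 4>d_0$), while the generically finite alternative gives $(L_1|_{F_1})^2\ge 2$ and then $d_1^2\ge K^2_F(L_1|_{F_1})^2\ge 6$, hence $d_1=3$ and $d_0+d_1=2r_1$ exactly. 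Your resulting bound $\omega^3_{X/Y}\ge 2h^0(\omega_{X/Y})-2K^2_F$ is at least as strong as the paper's $-6$, and your globalization via $\omega^3_{X/Y}=K^3_X-6K^2_F(g(Y)-1)$, the semipositivity bound on $h^0(\omega_{X/Y})$, the check $\chi(\mathcal O_F)+p_g(F)\le 3K^2_F$, and Remark \ref{observ} is identical to the paper's second half. What the paper's route buys is brevity and independence from the decomposition bookkeeping; what yours buys is uniformity with the proofs of Theorems \ref{relativenoether3fold1} and \ref{relativenoether3fold2} and a slightly sharper intermediate constant.
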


\begin{proof}
We first prove that
$$
\omega^3_{X/Y} \ge 2h^0(\omega_{X/Y})-6.
$$
Suppose we have proven the above result. As before, we still have
\begin{eqnarray*}
h^0(\omega_{X/Y}) & \ge & \chi(\omega_X) - (\chi(\mathcal O_F) + p_g(F))(g(Y)-1), \\
\omega^3_{X/Y} & = & K^3_X - 6K^2_F (g(Y)-1).
\end{eqnarray*}
It is easy to check that
$$
6K^2_F \ge 2(2p_g(F) + 1) \ge 2(\chi(\mathcal O_F) + p_g(F))
$$
in these cases. Hence it will imply that $K^3_X \ge 2\chi(\omega_X)$ by Remark \ref{observ}.

To prove that $\omega^3_{X/Y} \ge 2h^0(\omega_{X/Y})-6$, we can assume that $h^0(\omega_{X/Y}) \ge 4$. Hence we have
the relative canonical map $\phi_{\omega_{X/Y}}$. Note that $p_g(F) \le 3$ and we have
$$
0 \longrightarrow H^0(\omega_{X/Y}(-F)) \longrightarrow H^0(\omega_{X/Y}) \longrightarrow H^0(K_F).
$$
Thus $h^0(\omega_{X/Y}(-F))>0$ and it implies that $f$ factors through $\phi_{\omega_{X/Y}}$.

Choose a blow-up $\pi: X' \to X$ such that
the movable part $|M|$ of $|\pi^*\omega_{X/Y}|$ is base point free. Write $F'=\pi^*F$ and
$S$ as a general member of $|M|$.

If $\dim \phi_{\omega_{X/Y}}(X)=1$, then
$$
M \sim_{\rm num} aF' + Z
$$
where $Z \ge 0$ and $a \ge h^0(\omega_{X/Y})$. Hence
$$
\omega^3_{X/Y} \ge a(\pi^*\omega_{X/Y})^2 F' = a K^2_F \ge 2 h^0(\omega_{X/Y}).
$$

If $\dim \phi_{\omega_{X/Y}}(X)=2$, denote by $C'$ a general member of the induced pencil by $\phi_{M}$.
Then $M|_S$ is a free pencil and
$$
M|_S \sim_{\rm num} a C',
$$
where $a \ge h^0(\omega_{X/Y}) - 2$. Hence
$$
\omega^3_{X/Y} \ge ((\pi^*\omega_{X/Y})C') (h^0(\omega_{X/Y}) - 2).
$$
On the other hand, since $f$ factors through $\phi_{\omega_{X/Y}}$, we can find a general $F'$ such
that $C' \subset F'$. Then $(\pi^*\omega_{X/Y})C'=K_FC$, where $C=\pi(C')$. Note that $g(C) \ge 2$ and $K^2_F \ge 2$,
by the Hodge index theorem, we can get $K_FC \ge 2$. So
$$
\omega^3_{X/Y} \ge 2(h^0(\omega_{X/Y}) - 2).
$$

If $\dim \phi_{\omega_{X/Y}}(X)=3$, then $\dim \phi_{M}(S)=2$. By Theorem \ref{noether},
$(M|_S)^2 \ge 2h^0(M|_S) - 4$, which implies
$$
\omega^3_{X/Y} \ge M^3 \ge 2h^0(M|_S) - 4 \ge 2h^0(\omega_{X/Y}) - 6.
$$
It finishes the proof.
\end{proof}
In fact, the above method has been applied by Chen \cite{Ch1} for the study of the canonical linear system. Here we use this method for the relative canonical linear system.

\begin{prop} \label{pg32}
If $p_g(F)=3$ and $K^2_F = 2$, then
$$
K^3_X \ge \frac{12}{7} \chi(\omega_X).
$$
\end{prop}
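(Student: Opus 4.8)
The plan is to reuse the relative canonical inequality from the proof of Proposition \ref{pg23} and then convert it into a global inequality, choosing the largest coefficient that the numerical bookkeeping allows. The first point I would make is that the relative bound
$$
\omega^3_{X/Y} \ge 2 h^0(\omega_{X/Y}) - 6
$$
proved there is in fact valid for our fiber: its derivation only uses $p_g(F) \le 3$ (which forces $f$ to factor through $\phi_{\omega_{X/Y}}$ as soon as $h^0(\omega_{X/Y}) \ge 4$), the bound $K^2_F \ge 2$ in the cases $\dim \phi_{\omega_{X/Y}}(X) = 1, 2$, and Theorem \ref{noether} in the case $\dim \phi_{\omega_{X/Y}}(X) = 3$. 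All of these hold when $p_g(F) = 3$ and $K^2_F = 2$, and when $h^0(\omega_{X/Y}) \le 3$ the inequality is trivial from the nefness of $\omega_{X/Y}$. So I would re-derive it verbatim, checking in passing that no exceptional case of Theorem \ref{noether2} is needed, since the top-dimensional case only appeals to Theorem \ref{noether}.

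The real step is the conversion. Here I would first weaken the coefficient, using
$$
\omega^3_{X/Y} \ge 2 h^0(\omega_{X/Y}) - 6 \ge \frac{12}{7} h^0(\omega_{X/Y}) - 6,
$$
which is harmless since $h^0(\omega_{X/Y}) \ge 0$. Substituting $\omega^3_{X/Y} = K^3_X - 12(g(Y)-1)$ together with the semipositivity estimate $h^0(\omega_{X/Y}) \ge \chi(\omega_X) - (\chi(\mathcal O_F) + p_g(F))(g(Y)-1)$ used in Propositions \ref{rough2} and \ref{pg23}, one gets
$$
K^3_X \ge \frac{12}{7}\chi(\omega_X) + \left[12 - \frac{12}{7}\bigl(\chi(\mathcal O_F) + p_g(F)\bigr)\right](g(Y)-1) - 6.
$$
The decisive computation is that $\chi(\mathcal O_F) + p_g(F) = 1 - q(F) + 2p_g(F) = 7 - q(F) \le 7$, so the coefficient of $(g(Y)-1)$ equals $\frac{12}{7}q(F) \ge 0$; dropping this nonnegative term leaves $K^3_X \ge \frac{12}{7}\chi(\omega_X) - 6$. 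I would then finish by applying Remark \ref{observ} with $a_F = \frac{12}{7}$ and $b_F = -6$, passing to étale base changes $Y' \to Y$ to absorb the constant and conclude $K^3_X \ge \frac{12}{7}\chi(\omega_X)$.

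The hard part is conceptual rather than computational: the relative inequality is actually strong enough to give coefficient $2$, but the global conversion is controlled by the sign of $6K^2_F - \lambda(\chi(\mathcal O_F)+p_g(F))$, and for these Horikawa fibers (which have $q(F)=0$) this forces $\lambda \le \frac{6K^2_F}{2p_g(F)+1} = \frac{12}{7}$. This is precisely why the pair $(p_g(F),K^2_F) = (3,2)$ must be excluded from the coefficient-$2$ statement of Proposition \ref{pg23} and handled separately with the critical value $\frac{12}{7}$.
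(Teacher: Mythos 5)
Your proposal is correct and follows essentially the same route as the paper: the paper's proof likewise reuses the relative bound $\omega^3_{X/Y} \ge 2h^0(\omega_{X/Y})-6$ from Proposition \ref{pg23} together with $h^0(\omega_{X/Y}) \ge \chi(\omega_X) - 7(g(Y)-1)$ and $\omega^3_{X/Y} = K^3_X - 12(g(Y)-1)$, and then simply says ``the result follows from Remark \ref{observ}.'' Your write-up just makes explicit the coefficient bookkeeping the paper leaves implicit, namely that $12 - \lambda(\chi(\mathcal O_F)+p_g(F)) \ge 0$ forces $\lambda \le \frac{12}{7}$, which is exactly why this case is split off from Proposition \ref{pg23}.
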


\begin{proof}
In this case, $h^1(\mathcal O_F)=0$ and $\chi(\mathcal O_F) = 4$. Apply the same method as in Proposition \ref{pg23}. We can still get
\begin{eqnarray*}
\omega^3_{X/Y} & \ge & 2h^0(\omega_{X/Y})-6, \\
h^0(\omega_{X/Y}) & \ge & \chi(\omega_X) - 7(g(Y)-1), \\
\omega^3_{X/Y} & = & K^3_X - 12 (g(Y)-1).
\end{eqnarray*}
Then the result follows from Remark \ref{observ}.
\end{proof}

\begin{remark}
In fact, from the above inequalities, we can also get
$$
K^3_X \ge 2 \chi(\omega_X) - 2 h^1(\mathcal O_X)-4.
$$
But here $h^1(\mathcal O_X)$ is still involved.
\end{remark}

\begin{prop} \label{pg21}
If $p_g(F)=2$ and $K^2_F = 1$, then
$$
K^3_X \ge \frac 43 \chi(\omega_X).
$$
\end{prop}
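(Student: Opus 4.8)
Here $F$ is a minimal surface of general type with the smallest possible invariants: since $X$ has Albanese dimension one, the general fiber $F$ satisfies $q(F)=0$, hence $\chi(\mathcal O_F)=p_g(F)+1=3$, and $|K_F|$ is a genus-$2$ hyperelliptic pencil with a single base point. This is exactly the fiber excluded from Theorem \ref{relativenoether3fold1}, so the relative Noether bound on $h^0(\omega_{X/Y})$ used in Propositions \ref{pg23} and \ref{pg32} is unavailable. More seriously, the $h^0(\omega_{X/Y})$-route is intrinsically too weak here: passing through $h^0(\omega_{X/Y})\ge \deg f_*\omega_{X/Y}-p_g(F)(g(Y)-1)$ loses an extra $2(g(Y)-1)$, and the resulting transfer caps the attainable coefficient at $6K^2_F/(\chi(\mathcal O_F)+p_g(F))=6/5<4/3$. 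I would therefore abandon $h^0(\omega_{X/Y})$ and argue directly with $\deg f_*\omega_{X/Y}$.

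The plan is to reduce everything to a single slope inequality. On the curve $Y$, the Leray/Riemann--Roch identity for these fibers reads
$$
\deg f_*\omega_{X/Y}=\chi(\omega_X)+\deg R^1 f_*\omega_{X/Y}-(p_g(F)+1)(g(Y)-1)\ge \chi(\omega_X)-3(g(Y)-1),
$$
using $R^2 f_*\omega_{X/Y}\cong\mathcal O_Y$, the vanishing of the rank of $R^1 f_*\omega_{X/Y}$ (since $q(F)=0$), and Kollár's semipositivity $\deg R^1 f_*\omega_{X/Y}\ge 0$. Combining this with $K^3_X=\omega^3_{X/Y}+6K^2_F(g(Y)-1)=\omega^3_{X/Y}+6(g(Y)-1)$ and $g(Y)=h^1(\mathcal O_X)\ge 1$, the proposition follows from the slope inequality
$$
\omega^3_{X/Y}\ge \frac 43\deg f_*\omega_{X/Y}.
$$
Indeed, granting this, $K^3_X\ge \frac 43\bigl(\chi(\omega_X)-3(g(Y)-1)\bigr)+6(g(Y)-1)=\frac 43\chi(\omega_X)+2(g(Y)-1)\ge \frac 43\chi(\omega_X)$, with no exceptional terms to discard; any additive constant introduced in the slope estimate is cleared by Remark \ref{observ}.

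To establish the slope inequality I would run Xiao's method on the Harder--Narasimhan filtration of $f_*\omega_{X/Y}$, in the spirit of Ohno \cite{Oh} and Barja \cite{Ba1}, comparing the successive slopes of the filtration against the intersection numbers $d_i$ produced by the decomposition of Theorem \ref{decomposition} and Proposition \ref{algcase1}. The only fiber-theoretic input is a Clifford-type bound for $\omega_{X/Y}|_F$: because $|K_F|$ is a hyperelliptic genus-$2$ pencil with $K^2_F=1$, Clifford's inequality on its genus-$2$ members controls each ratio $r_i/d_i$, and it is precisely this hyperelliptic degeneracy that pins the slope at $4/3$ rather than at the weaker value predicted by the inapplicable Theorem \ref{relativenoether3fold1}.

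The hard part will be this slope inequality for the one degenerate fiber type. I expect the real trouble in the cases where the relative canonical map $\phi_{\omega_{X/Y}}$ degenerates --- when $|\omega_{X/Y}|$ is composed with the Albanese pencil, or factors through the induced genus-$2$ fibration over a ruled surface --- since there the naive canonical-image estimate gives only slope $\approx 1$. To push past this I would bound $\deg f_*\omega_{X/Y}$ through the Fujita decomposition of $f_*\omega_{X/Y}$ into its ample and flat parts rather than through $h^0(\omega_{X/Y})$, and verify that the hyperelliptic Clifford bound survives on each graded piece of the filtration. Once the slope $4/3$ is secured on every piece, the transfer above is immediate.
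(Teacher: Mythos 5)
Your reduction is sound as far as it goes: the Leray computation giving $\deg f_*\omega_{X/Y}\ge\chi(\omega_X)-3(g(Y)-1)$ (using $q(F)=0$ and $R^2f_*\omega_{X/Y}\cong\mathcal O_Y$), the identity $K^3_X=\omega^3_{X/Y}+6(g(Y)-1)$, and the observation that the coefficient $6K^2_F/(\chi(\mathcal O_F)+p_g(F))=6/5$ makes the $h^0(\omega_{X/Y})$ route hopeless here are all correct, and the final transfer does yield $K^3_X\ge\frac43\chi(\omega_X)+2(g(Y)-1)$ once the slope inequality $\omega^3_{X/Y}\ge\frac43\deg f_*\omega_{X/Y}$ is in hand. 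The problem is that this slope inequality \emph{is} the proposition: you have moved the entire difficulty into a statement you neither prove nor locate precisely in the literature. Your last two paragraphs are a research plan, not an argument --- ``I would run Xiao's method'', ``I expect the real trouble in the cases where $\phi_{\omega_{X/Y}}$ degenerates'', ``I would \ldots verify that the hyperelliptic Clifford bound survives on each graded piece''. You yourself identify the genuinely dangerous configurations (e.g.\ $|\omega_{X/Y}|$ composed with the Albanese pencil, where the naive estimate gives slope about $1$) and then leave them unresolved. As written, the proof has a gap exactly where the content should be.

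For comparison, the paper disposes of this case in two lines: it quotes Hu's recent inequality $K^3_X\ge\frac43\chi(\omega_X)-2$ and then removes the additive constant $-2$ by the \'etale base change trick of Remark \ref{observ} (pull back by a degree-$d$ \'etale cover of $Y$, which multiplies both $K^3_X$ and $\chi(\omega_X)$ by $d$, and let $d\to\infty$) --- the same device you invoke at the end, applied to a black-box input rather than to a slope bound. If you want a self-contained argument along your lines, you must either prove $\omega^3_{X/Y}\ge\frac43\deg f_*\omega_{X/Y}$ for fibers with $(p_g(F),K^2_F)=(2,1)$ in full, including the degenerate cases you flag, or cite a precise theorem of Ohno or Barja that covers exactly this fiber type; a generic appeal to Xiao's method does not suffice.
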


\begin{proof}
By a very recent result of Hu (c.f. \cite{Hu}), we know that in this case,
$$
K^3_X \ge \frac 43 \chi(\omega_X) - 2.
$$
Therefore, the result follows from Remark \ref{observ}.
\end{proof}

\begin{prop} \label{pg1}
If $p_g(F)=1$ and $K^2_F > 1$, then
$$
K^3_X \ge 2 \chi(\omega_X).
$$
\end{prop}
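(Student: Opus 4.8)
The plan is to dispatch the range $K^2_F \geq 4$ with the bound already in hand, and then to treat the two remaining values $K^2_F = 2$ and $K^2_F = 3$ by proving a sharper relative inequality tailored to the hypothesis $p_g(F)=1$ and feeding it into the same semipositivity-plus-\'etale-base-change machine used in Proposition \ref{pg23}. For the reduction, observe that $p_g(F)=1>0$ and $(p_g(F),K^2_F)\neq(2,1)$, so Proposition \ref{rough2} applies and gives
$$
\chi(\omega_X) \le \left(\frac 14 + \frac{1}{K^2_F}\right) K^3_X.
$$
When $K^2_F \geq 4$ the coefficient is at most $\tfrac12$, and the desired inequality $K^3_X \ge 2\chi(\omega_X)$ follows at once. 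Thus only $K^2_F \in \{2,3\}$ require a separate argument.

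For these two cases I would first establish the relative bound $\omega^3_{X/Y} \ge 2h^0(\omega_{X/Y}) - 2$. We may assume $h^0(\omega_{X/Y}) \ge 2$, the remaining case being trivial since $\omega_{X/Y}$ is nef. The crucial feature of $p_g(F)=1$ is that $h^0(\omega_{X/Y}|_F)=h^0(K_F)=1$, so the relative canonical map contracts the general fibre and hence $\dim \phi_{\omega_{X/Y}}(X) \le 1$; in other words $|\omega_{X/Y}|$ is necessarily composed with a pencil. This is exactly the alternative $\dim \phi_{\omega_{X/Y}}(X)=1$ appearing in the proof of Proposition \ref{pg23}, except that here it is the only possibility. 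Choosing a blow-up $\pi:X'\to X$ so that the movable part $|M|$ of $|\pi^*\omega_{X/Y}|$ is base point free, I would write $M \sim_{\rm num} a\Phi + Z$ with $\Phi$ a general member of the pencil, $Z\ge 0$ the fixed part and $a \ge h^0(\omega_{X/Y})-1$. Because the pencil contracts the fibres of $f$ it factors through $f$, so its general member $\Phi$ is a vertical divisor containing a general fibre $\pi^*F$; as $\pi^*\omega_{X/Y}$ is nef this yields
$$
\omega^3_{X/Y} \ge a\,(\pi^*\omega_{X/Y})^2\Phi \ge (h^0(\omega_{X/Y})-1)\,(\pi^*\omega_{X/Y})^2(\pi^*F) = (h^0(\omega_{X/Y})-1)K^2_F \ge 2h^0(\omega_{X/Y})-2,
$$
using $K^2_F \ge 2$.

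With this in hand I would conclude exactly as in Propositions \ref{pg23} and \ref{pg32}. The semipositivity of $f_*\omega_{X/Y}$ and $R^1f_*\omega_{X/Y}$ (via \cite{Ko1, Ko2} and \cite[Lemma 2.4, 2.5]{Oh}) gives
$$
h^0(\omega_{X/Y}) \ge \chi(\omega_X) - (\chi(\mathcal{O}_F)+p_g(F))(g(Y)-1),
$$
while $\omega^3_{X/Y} = K^3_X - 6K^2_F(g(Y)-1)$. Since $p_g(F)=1$ forces $\chi(\mathcal{O}_F)+p_g(F) = 3-q(F) \le 3$, we have $6K^2_F \ge 12 \ge 2(\chi(\mathcal{O}_F)+p_g(F))$, so combining the three relations absorbs the $(g(Y)-1)$ terms and leaves $K^3_X \ge 2\chi(\omega_X)-2$. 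Remark \ref{observ}, applied with $a_F=2$ and $b_F=-2$, then removes the constant and yields $K^3_X \ge 2\chi(\omega_X)$.

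The main obstacle I anticipate lies in the geometric step inside the relative bound: justifying that the general member $\Phi$ of the relative canonical pencil is genuinely vertical with $(\pi^*\omega_{X/Y})^2\Phi \ge K^2_F$. This rests on the relative canonical map factoring through $f$ — which does hold because it contracts the general fibre — together with a careful comparison between the base curve of the pencil and $Y$; they may differ by a finite morphism, in which case $\Phi$ is a sum of fibres and the inequality only improves. Everything else is the bookkeeping already rehearsed in Propositions \ref{pg23}--\ref{pg32}.
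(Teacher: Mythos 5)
Your proof is correct, but it takes a genuinely different route from the paper's. The paper works with the \emph{absolute} canonical system: since $p_g(F)=1$, when $p_g(X)\ge 2$ the map $\phi_{K_X}$ contracts the Albanese fibres, so $K_X \sim_{\rm num} rF+Z$ with $r\ge p_g(X)$, and a direct intersection computation gives $K^3_X \ge (r+2g(Y)-2)K^2_F \ge 2\chi(\omega_X)$ using only $\chi(\omega_X)\le p_g(X)+h^1(\mathcal O_X)-1$; the residual cases with $p_g(X)=1$ are then settled by hand, the hardest one ($p_g(X)=h^1(\mathcal O_X)=1$) by an \'etale base change that may raise the Albanese dimension and hence invokes Theorem \ref{3fold}. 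You instead run the template of Propositions \ref{pg23} and \ref{pg32}: the same key observation ($p_g(F)=1$ forces $|\omega_{X/Y}|$ to be composed with the Albanese pencil) applied to the \emph{relative} canonical system yields $\omega^3_{X/Y}\ge 2h^0(\omega_{X/Y})-2$, which you then feed into the Koll\'ar--Ohno semipositivity bound and Remark \ref{observ}. Both arguments are sound, and your geometric worry about the base of the pencil is easily dispatched (after Stein factorization the base must equal $Y$ because a fibre of the pencil is connected, and in any case a sum of fibres only helps). Your version is more uniform with the surrounding propositions and avoids the delicate final subcase of the paper's proof, since the inequality $K^3_{X'}\ge 2\chi(\omega_{X'})-2$ persists under every base change along $Y$ regardless of what the Albanese dimension of $X'$ becomes; the cost is importing the semipositivity of $f_*\omega_{X/Y}$ and $R^1f_*\omega_{X/Y}$, which the paper's direct computation does not need here. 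One small remark: the preliminary reduction to $K^2_F\in\{2,3\}$ via Proposition \ref{rough2} is harmless but unnecessary, as your pencil argument works verbatim for every $K^2_F\ge 2$.
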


\begin{proof}
In this case, we know that $p_g(X) > 0$. First, let us assume that $p_g(X) \ge 2$. Then
the canonical maps of $X$ will factor through $f$. So we can write
$$
K_X \sim_{\rm num} rF + Z,
$$
where $r \ge p_g(X)$ and $Z \ge 0$. It follows that
\begin{eqnarray*}
K^3_X & = & K^2_X (\omega_{X/Y}+(2g(Y)-2)F) = (2g(Y)-2)K^2_F + \omega_{X/Y}K^2_X \\
& \ge & (2g(Y)-2)K^2_F + r\omega_{X/Y}K_X F = (r+2g(Y)-2)K^2_F\\
& \ge &2 \chi(\omega_X).
\end{eqnarray*}

Second, if $p_g(X)=1$ and $h^1(\mathcal O_X)>1$, then $\chi(\omega_X) \le h^1(\mathcal O_X)$. We have
$$
K^3_X \ge 6(g(Y)-1)K^2_F \ge 2\chi(\omega_X).
$$

The only missing case is when $p_g(X)=1$ and $h^1(\mathcal O_X)=1$. In this case, $\chi(\omega_X)=1$ and $g(Y)=1$. Now
let $\mu: Y \to Y$ be any nontrivial \'etale map and $X'=X \times_{\mu} Y$. We have $\chi(\omega_{X'})>1$.
So either $p_g(X') \ge 2$ or $h^1(\mathcal O_{X'}) \ge 2$. If $X'$ has Albanese dimension $\ge 2$, then by Theorem \ref{3fold},
$$
K^3_{X'} \ge 4\chi(\omega_{X'}).
$$
If not, we are just in one of the first two cases and
$$
K^3_{X'} \ge 2\chi(\omega_{X'}).
$$
In any situation, it will imply that $K^3_{X} \ge 2\chi(\omega_{X})$.
\end{proof}

\begin{prop} \label{pg11}
If $p_g(F)=1$ and $K^2_F = 1$, then we have
$$
K^3_X \ge 2 \chi(\omega_X).
$$
\end{prop}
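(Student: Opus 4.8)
The plan is to exploit the fact that $p_g(F)=1$ forces $\mathcal E:=f_*\omega_{X/Y}$ to be a \emph{line bundle} on the curve $Y$, and then to analyze the relative canonical divisor cut out by the evaluation map $f^*\mathcal E\to\omega_{X/Y}$. First I record that a minimal surface of general type with $K^2_F=p_g(F)=1$ is regular, so $q(F)=0$ and $\chi(\mathcal O_F)=2$ (this is classical; it also follows from Debarre's inequality, since an irregular minimal surface would violate $K^2_F=1$). Because $h^0(F,K_F)=p_g(F)=1$, the sheaf $\mathcal E$ is locally free of rank one and the relative evaluation $f^*\mathcal E\to\omega_{X/Y}$ is a nonzero map of line bundles, an isomorphism on the generic fibre away from the canonical curve. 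Hence $\omega_{X/Y}\equiv f^*\mathcal E+D$ with $D\geq 0$ effective and $D|_F\in|K_F|$. By adjunction on $F$ the fibres of $D\to Y$ have arithmetic genus $2$, so $D\to Y$ is a genus-$2$ fibration; it is no accident that the slope $\tfrac{4(p_a-1)}{p_a}$ of a genus-$2$ fibration equals the target constant $2$, nor that the clean value $2$ in this case (against $\tfrac43$ for $(2,1)$) is tied to $\chi(\mathcal O_F)=2$.

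The core is a short sequence of intersection computations on $X$ and on $D$. Writing $e=\deg\mathcal E$ and using $\omega_{X/Y}^2F=K^2_F=1$ together with $f^*\mathcal E\equiv eF$, I obtain $\omega_{X/Y}^3=3e+D^3$, $\omega_{X/Y}^2\cdot D=2e+D^3$, and $\omega_{X/Y}\cdot D^2=e+D^3$. Adjunction gives $\omega_{D/Y}=(\omega_{X/Y}+D)|_D$, whence $\omega_{D/Y}^2=4(e+D^3)$ and $K_D^2=\omega_{D/Y}^2+8(g(Y)-1)$. Feeding these into the fibration formulas $K^3_X=\omega_{X/Y}^3+6(g(Y)-1)K^2_F$ and $\chi(\omega_X)=\deg f_*\omega_{X/Y}-\deg R^1f_*\omega_{X/Y}+\chi(\mathcal O_F)(g(Y)-1)$ (as in the proof of Proposition \ref{rough2} and in \cite{Oh}), and using $\chi(\mathcal O_F)=2$, all the $e$- and $g(Y)$-dependence organizes into the exact identity
$$
K^3_X-2\chi(\omega_X)=\tfrac14\,K_D^2+2\deg R^1f_*\omega_{X/Y}.
$$
Since $\deg R^1f_*\omega_{X/Y}\geq 0$, the whole proposition reduces to the single positivity statement $K_D^2\geq -8\deg R^1f_*\omega_{X/Y}$.

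I expect this positivity to be the one real obstacle. Over a general point of $Y$ the fibre of $D$ is a smooth genus-$2$ curve on which $\omega_{D/Y}$ is nef, and $R^1f_*\omega_{X/Y}$ is a \emph{torsion} sheaf, since its generic rank is $h^1(F,\omega_F)=q(F)=0$; thus both sides of the reduced inequality are concentrated at the finitely many degenerate fibres. The plan is to pass to a resolution and then to the relatively minimal model $D_0\to Y$, where the slope inequality for genus-$2$ fibrations (Xiao's method \cite{Xi}; compare \cite{YZ2}) yields $\omega_{D_0/Y}^2\geq 0$ and hence $K_{D_0}^2\geq 0$, and then to control the discrepancy $K_{D_0}^2-K_D^2$ — produced by the singularities of $D$ and by the vertical $(-1)$-curves contracted in forming $D_0$ — by $8$ times the local length of $R^1f_*\omega_{X/Y}$ at each bad fibre. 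This is a local computation matching the failure of relative minimality of $D$ against the torsion of $R^1f_*\omega_{X/Y}$, and it is here, rather than in the formal identity above, that the work lies; any residual universally bounded deficit can moreover be absorbed through the \'etale base-change trick of Remark \ref{observ}.
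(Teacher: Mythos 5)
Your reduction is an exact reformulation, not a proof. The setup and the intersection-theoretic identity are correct: with $\mathcal E=f_*\omega_{X/Y}$ of rank one, $e=\deg\mathcal E$, $t=\deg R^1f_*\omega_{X/Y}$ and $\omega_{X/Y}\sim f^*\mathcal E+D$, one checks $\omega_{X/Y}^3=3e+D^3$, $\chi(\omega_X)=e-t+2(g(Y)-1)$, and hence $K^3_X-2\chi(\omega_X)=\frac14(K_X+D)^2D+2t$ exactly as you claim. But precisely because the identity is exact, the statement ``$(K_X+D)^2D\ge -8t$'' is \emph{equivalent} to the proposition; nothing has been gained, and the entire difficulty is concentrated in the step you defer. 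Your plan for that step does not go through as described: $D$ is the zero divisor of the evaluation map $f^*\mathcal E\to\omega_{X/Y}$ and may be non-reduced, reducible, non-normal, and may contain vertical components, so ``the relatively minimal model $D_0\to Y$'' is not defined without first extracting the horizontal part, normalizing and resolving, and each of these operations changes $(K_X+D)^2D$ (the square of the dualizing sheaf of the scheme $D$, which is what appears in your identity) in ways that are not a priori bounded. The crucial assertion that the resulting discrepancy is controlled by $8$ times the local length of $R^1f_*\omega_{X/Y}$ is stated with no mechanism behind it: there is no general principle relating the failure of relative minimality or the singularities of the relative canonical divisor to the torsion of $R^1f_*\omega_{X/Y}$ with a universal constant, and the \'etale trick of Remark \ref{observ} only absorbs additive constants independent of the fibration, not fibre-by-fibre local defects, which scale with the degree of the base change. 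So there is a genuine gap, and it sits exactly where the theorem's content is.

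For comparison, the paper's proof avoids the relative canonical system entirely (which is badly behaved here since $|K_F|$ consists of a single, possibly singular, curve) and works instead with $|2K_X|$: by Francia's theorem $|2K_F|$ is base point free with $h^0(2K_F)=3$ and defines a degree-$4$ morphism, Reid's plurigenus formula gives $h^0(2K_X)\ge \frac12 K^3_X-3\chi(\mathcal O_X)$, and a case analysis on the rank of the restriction $H^0(M)\to H^0(M|_{F'})$ (using Theorem \ref{decomposition}, Propositions \ref{algcase1} and \ref{newr}, and \cite[Lemma 2.5]{Ch1}) yields $h^0(2K_X)-2K^3_X\le 3$, from which the \'etale trick concludes. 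If you want to salvage your approach, you would need an honest slope-type inequality for the possibly non-reduced genus-two ``fibration'' $D\to Y$ with error term bounded by the torsion of $R^1f_*\omega_{X/Y}$; that is a substantial local-global statement that must be proved, not assumed.
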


\begin{proof}
Here we prove this result by studying the linear system $|2K_X|$.

Since $p_g(F)=1$ and $K^2_F = 1$, then $h^1(\mathcal O_F)=0$, $h^0(2K_F)=3$ and $|2K_F|$ is base point free (see \cite{Fr}).
Hence $\phi_{2K_F}$ is a generically finite morphism of degree $4$.

As before, we choose a blow-up $\pi: X' \to X$ such that the movable part $|M|$ of $|\pi^* (2K_X)|$ is base point free. Note that by the plurigenus formula of
Reid,
$$
h^0(M)=h^0(2K_X) \ge \frac 12 K^3_X - 3\chi(\mathcal O_X).
$$

Denote $F'=\pi^*F$. Consider
the following restriction map:
$$
\textrm{res}: H^0(X', M) \to H^0(F', M|_{F'}).
$$
Denote by $r$ the dimension of its image. So $1 \le r \le 3$.

If $r=1$, then $\phi_{M}(X')$ is a curve and $\phi_{M}$ factors through the fibration $X' \to Y$. In this
case,
$$
M \sim_{\rm num} a F' + Z
$$
with $a \ge h^0(M)$. Hence we have
$$
2 K^3_X  \ge M (\pi^*K_X)^2 \ge a K^2_F \ge \frac 12 K^3_X - 3\chi(\mathcal O_X),
$$
i.e., $K^3 \ge 2 \chi(\omega_X)$.

If $r=2$, write $L_0=M$ and $P=\pi^* (2K_X)$. Resume the notations in Theorem \ref{decomposition} and Proposition \ref{algcase1}. Similar to Theorem \ref{relativenoether3fold1}, we have
$$
h^0(L_0) - \frac{PL^2_0}{4} \le \frac{d_0}2 + \left(r_0-\frac 12 d_0\right)a_0 + \sum_{i=1}^N \left(r_i-\frac 12 d_i-\frac{d_{i-1}-d_i}4\right) a_i.
$$
Here $r_0=h^0(M|_{F'})$ and $d_0=2(\pi^* K_F)(M|_{F'})$. Also, by Proposition \ref{newr},
the above inequality still holds if we change $r_0$ by $r$.

We claim that $d_0=4$ in this case. In fact, we know that $d_0 \le 4K^2_F =4$.
If $r_0 =3$, then $M|_{F'} = \pi^*(2K_F)$ and $d_0=4K^2_F=4$. If not, then $r_0=r=2$.
By \cite[Lemma 2.5]{Ch1},
we know that $(\pi^*K_F) (M|_{F'}) \ge 2$, which still gives $d_0 \ge 4$. Hence the claim is true and we have
$$
r-\frac 12 d_0 \le 0.
$$

For $i>0$, we know that $r_i < r_0 = 3$ by Remark \ref{r0r1}. Moreover, by \cite[Lemma 2.5]{Ch1} again, $d_i \ge 4$ if $r_i=2$, and $r_{i-1} \ge 2$ if $r_i=1$, which implies $d_{i-1} \ge 4$. From this, one can check that for any $i>0$,
$$
r_i-\frac 12 d_i-\frac{d_{i-1}-d_i}4 \le 0.
$$
Therefore, we have
$$
h^0(2K_X)-2K^3_X \le h^0(L_0) - \frac{PL^2_0}{4} \le 2.
$$

If $r=3$, then $\phi_{M}|_F = \phi_{2K_F}$. Thus $\phi_{M}$ is generically finite of degree $4$. Choose a general member $S \in |M|$. We have
$$
(M|_S)^2 \ge 4(h^0(M|_S)-2).
$$
Hence
$$
K^3_X \ge \frac 18 M^3 \ge \frac 12 (h^0(M|_S)-2) \ge \frac 12 (h^0(2K_X)-3).
$$

As a result, if $r \ge 2$, we always have
$$
h^0(2K_X) - 2K^3_X \le 3.
$$
Apply the plurigenus formula and Remark \ref{observ} to the above two cases. We can finish the whole proof.
\end{proof}

Now the proof of Theorem \ref{alb1} is straightforward.
\begin{proof}[Proof of Theorem \ref{alb1}]
From Proposition \ref{rough2}, we know that $K^2_X \ge 2\chi(\omega_X)$ holds when $K^2_F \ge 4$ and
$K^2_X \ge 3\chi(\omega_X)$ holds when $K^2_F \ge 12$ . If $K^2_F \le 4$, by Noether inequality,
$p_g(F) \le 3$. In this case, the result just comes from Proposition \ref{pg0}, \ref{pg23}, \ref{pg32}, \ref{pg21}, \ref{pg1}, \ref{pg11}.
\end{proof}


\section{Proof of Theorem \ref{alb12}}
In this section, we give the proof of Theorem \ref{alb12}.
We first give a better version of the relative Noether inequality for fibered 3-folds over curves.

\begin{theorem} \label{relativenoether3fold2}
Let $X$ be a Gorenstein minimal 3-fold of general type, $Y$ a smooth curve and $f: X \to Y$ is a fibration with a smooth general fiber $F$.
Suppose that $F$ has no hyperelliptic pencil and $(p_g(F), K^2_F) \ne (2, 2)$. Then
$$
h^0(\omega_{X/Y}) \le \left(\frac 16 + \frac{3}{2K^2_F} \right) \omega^3_{X/Y} + \frac {K^2_F + 7}{3}.
$$
\end{theorem}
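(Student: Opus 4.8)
The plan is to follow the template of the proof of Theorem \ref{relativenoether3fold1} almost line by line, replacing the Noether-type bounds on surfaces by the Castelnuovo-type bounds proved in Section 4. As before, $\omega_{X/Y}$ is nef since $X$ is minimal of general type. I set $L_0=P=\omega_{X/Y}$ and apply Theorem \ref{decomposition} together with Proposition \ref{algcase1}. This produces the two basic estimates
\begin{eqnarray*}
h^0(L_0) & \le & \sum_{i=0}^N a_{i} r_{i}, \\
L^3_0 & \ge & 2a_0 d_0 + \sum_{i=1}^N a_{i}(d_{i-1}+d_i) - 2d_0.
\end{eqnarray*}
Combining these exactly as in Theorem \ref{relativenoether3fold1}, I obtain the master inequality
$$
h^0(L_0) - \frac{L^3_0}{6} \le \frac{d_0}{3} + \left(r_0-\frac 13 d_0\right)a_0 + \sum_{i=1}^N \left(r_i-\frac 13 d_i-\frac{d_{i-1}-d_i}{6}\right) a_i,
$$
where now the threshold coefficient is $1/6$ rather than $1/4$ to match the target slope.

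The heart of the argument is to bound each $r_i$ by the corresponding $d_i$ using the hypothesis that $F$ has no hyperelliptic pencil. For $i>0$ the bound $r_i \le \tfrac 13 d_i + \tfrac13(d_{i-1}-d_i) + \tfrac53$ should follow from Theorem \ref{castelnuovo2} (the non-composed-with-a-pencil branch applies since $r_{i-1}>r_i$ forces $h^0(L_i|_{F_i})<p_g(F)$), making each summand in the master inequality harmless. For the initial term $r_0$, I expect to invoke Theorem \ref{castelnuovo} together with Remark \ref{r0r1} to get $r_0 \le \tfrac 13 d_0 + \tfrac73$ (or a closely related constant), so that $(r_0-\tfrac13 d_0)a_0$ contributes only a bounded amount. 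Once these per-index bounds are in place, I feed in Lemma \ref{algsumai} to control $\sum_{i=0}^N a_i$ by $L^3_0/d_0$, arriving at
$$
h^0(L_0) \le \left(\frac 16 + \frac{c}{d_0}\right)L^3_0 + \frac{d_0+c'}{3}
$$
for explicit constants $c,c'$; I will need $c=3/2$ and $c'=7$ to match the stated form, with $d_0=K^2_F$.

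The main obstacle will be the exceptional case, exactly as in Theorem \ref{relativenoether3fold1}: the Castelnuovo bound in Theorem \ref{castelnuovo2} degenerates precisely when $K^2_{S'}=2$, $p_g=2$, $q=0$, which is why the hypothesis $(p_g(F),K^2_F)\neq(2,2)$ is imposed. The delicate point is the boundary case of the induction step where equality or near-equality in the Clifford/Castelnuovo estimate occurs and the naive per-index bound on $r_0$ fails; there I anticipate having to mimic the Case (1)/Case (2) analysis of Theorem \ref{relativenoether3fold1}, passing to a blow-up on which the movable part $|M|$ of $|\pi^*\omega_{X/Y}|$ is base point free so that $r_{i-1}>r_i$ strictly holds for all $i>0$, and then ruling out the degenerate subcase using the $2$-connectedness of the canonical divisor on $F$ and the Hodge index theorem. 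Verifying that the excluded numerical type $(p_g(F),K^2_F)=(2,2)$ is the only genuine obstruction, and that all other small-$K^2_F$ configurations are absorbed by these refinements, is where the real work lies; the rest is bookkeeping parallel to the earlier proof.
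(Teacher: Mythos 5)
Your route is the paper's route: the same master inequality with threshold $\frac16$, Castelnuovo-type surface bounds in place of Noether-type ones, Lemma \ref{algsumai} to control $\sum a_i$, and a blow-up plus $2$-connectedness plus Hodge index analysis in the exceptional case. But there is a concrete gap in your per-index estimate for $i>0$, and it sits exactly where the stated coefficient $\frac{3}{2K^2_F}$ comes from. The master inequality requires bounding $r_i-\frac13 d_i-\frac{d_{i-1}-d_i}{6}$ by an absolute constant before multiplying by $a_i$. Your proposed bound $r_i\le \frac13 d_i+\frac13(d_{i-1}-d_i)+\frac53$ only yields $r_i-\frac13 d_i-\frac{d_{i-1}-d_i}{6}\le \frac{d_{i-1}-d_i}{6}+\frac53$, and the weighted sum $\sum a_i(d_{i-1}-d_i)$ is of the same order as $L_0^3$ (by Proposition \ref{algcase1}(2)), so this term would roughly double the leading coefficient from $\frac16$ to $\frac13$ and the argument does not close. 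Even if you drop the spurious $\frac13(d_{i-1}-d_i)$ and use the clean Castelnuovo bound $r_i\le\frac13 d_i+\frac53$ from Theorem \ref{castelnuovo2}, feeding $\frac53$ into Lemma \ref{algsumai} produces $\frac{5}{3d_0}$, not $\frac{3}{2d_0}$. The missing idea is the paper's Lemma \ref{est}: either $r_i\le\frac13 d_i+\frac43$ (harmless), or $r_i=\frac13 d_i+\frac53$ exactly, and in the latter case one must have $d_{i-1}>d_i$, since otherwise $r_{i-1}\ge r_i+1=\frac13 d_{i-1}+\frac83$ would violate the Castelnuovo bound for $L_{i-1}|_{F_{i-1}}$; the forced drop $d_{i-1}-d_i\ge1$ converts $\frac53$ into $\frac53-\frac16=\frac32$, which is precisely what the theorem needs.

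The exceptional case $r_0=r_1$ is also only gestured at. The paper does not immediately blow up: it first observes that $|K_F|$ then has base locus, so $K^2_F\ge 3p_g(F)-6$, and splits into three cases. When $K^2_F\ge 3p_g(F)-4$ it recovers the $i=1$ bound directly via Theorem \ref{castelnuovo} and the Hodge index theorem (using that $d_0\ge d_1+1$ is forced), with no blow-up; only when $K^2_F\le 3p_g(F)-5$ does it pass to the movable part $|M|$ of $|\pi^*\omega_{X/Y}|$, and there the essential verification is that $d_M=(\pi^*K_F)(M|_{F'})$ equals $K^2_F$ — handled by showing $|\pi(M)|_F|=|K_F|$ when the base locus is isolated, and by $2$-connectedness forcing $K_FV=K^2_F$ when $|K_F|=|V|+Z$ with $Z>0$. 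Without $d_M=K^2_F$ the final inequality is stated in terms of $d_M$ rather than $K^2_F$. You correctly locate where the hypothesis $(p_g(F),K^2_F)\ne(2,2)$ enters (the exceptional case of Theorem \ref{castelnuovo2}), but the two items above are genuine missing steps, not bookkeeping.
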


\begin{proof}
Still resume the notations in Theorem \ref{decomposition} and Proposition \ref{algcase1}. Denote
$L_0=P=\omega_{X/Y}$. It follows that
\begin{eqnarray*}
h^0(L_0) & \le & \sum_{i=0}^N a_{i} r_{i}; \\
L^3_0 & \ge & 2a_0d_0 + \sum_{i=1}^N a_{i} (d_{i-1}+d_i) - 2d_0.
\end{eqnarray*}
Hence
$$
h^0(L_0) - \frac{L^3_0}{6} \le \frac{d_0}3 + \left(r_0-\frac 13 d_0\right)a_0 + \sum_{i=1}^N \left(r_i-\frac 13 d_i-\frac{d_{i-1}-d_i}6\right) a_i.
$$

By the Castelnuovo inequality, we can always get
$$
r_0 \le \frac 13 d_0 + \frac 73.
$$
Recall that by Remark \ref{r0r1}, $r_0 \ge r_1 > \cdots > r_N$.
We will prove in Lemma \ref{est} that if $r_0 > r_{1}$, then
$$
r_i \le \frac 13 d_i + \frac{d_{i-1}-d_i}6 + \frac 32, \quad (i>0).
$$
Assume the above result for now. It follows that
$$
h^0(L_0) - \frac{L^3_0}{6} \le \frac{d_0}3 + \frac 73 a_0 + \frac 32 \sum_{i=1}^N a_i.
$$
By Lemma \ref{algsumai},
$$
\frac 73 a_0 + \frac 32 \sum_{i=1}^N a_i \le \frac{3L^3_0}{2d_0} + 3 - \frac 23a_0 \le \frac{3L^3_0}{2d_0} + \frac 73.
$$
Hence
$$
h^0(L_0) \le \left(\frac 16 + \frac{3}{2d_0} \right) L^3_0 + \frac {d_0 + 7}{3}.
$$

Now we assume that $r_0=r_1$. It implies that $|K_F|$ has base locus by Remark \ref{r0r1}. Hence by the Castelnuovo inequality,
$$
K^2_F \ge 3 p_g(F) - 6.
$$
We have three exceptional cases.

\textbf{Case 1}. Suppose $K^2_F \ge 3 p_g(F) - 4$. We claim we still have
$$
r_i \le \frac 13 d_i + \frac{d_{i-1}-d_i}6 + \frac 32, \quad (i>0).
$$

This claim is true for $i \ge 2$ by Lemma \ref{est}. We only need
prove it for $i=1$.

If $|K_F|$ is composed with a pencil, by
Theorem \ref{castelnuovo2}, we have
$$
r_1 \le \frac 13 d_1 + 1.
$$
Hence the claim holds.

If $\phi_{K_F}$ is generically finite, then by Theorem \ref{castelnuovo},
$$
(L_1|_{F_1})^2 \ge 3 r_1 - 7.
$$
Therefore, by the Hodge index theorem,
$d_1 \ge \sqrt{K^2_F (L_1|_{F_1})^2}$,
which gives us
$$
r_1 \le \frac 13 d_1 + \frac 53.
$$
By our assumptions on $K^2_F$, we know that $d_0 \ge 3r_0 - 4 \ge d_1 + 1$. One can directly check that the claim holds in this case.

\textbf{Case 2}. Suppose $K^2_F \le 3 p_g(F) - 5$ and $|K_F|$ has only isolated base points.
Note that in this case, since $h^0(L_1|_{F_1})=h^0(K_F)$, $|L_1|_{F_1}|$ is just the proper transform of $|K_F|$. Hence
$$
(\pi^*_0 K_F)(L_1|_{F_1})=K^2_F.
$$

Again, choose a blow-up $\pi: X' \to X$ such that the movable part $|M|$ of $|\pi^* \omega_{X/Y}|$ is base point free. Let $F'=\pi^* F$.
Denote the new $L_0=M$ and $P=\pi^* \omega_{X/Y}$. We will have a new sequence of $r_i$'s and $d_i$'s. Under this new setting, for each $i>0$, we have $r_{i-1} > r_i$. By Lemma \ref{est},
$$
r_i \le \frac 13 d_i + \frac{d_{i-1}-d_i}6 + \frac 32, \quad (i>0).
$$
in this new setting. Run the same process as the non-exceptional case and we will have
$$
h^0(\omega_{X/Y}) \le \left(\frac 16 + \frac{3}{2d_{M}} \right) \omega^3_{X/Y} + \frac {d_{M} + 7}{3},
$$
where $d_{M} = (\pi^*K_F)(M|_{F'})$. We only need to show that $d_{M}=K^2_F$. This is quite similar to Theorem \ref{relativenoether3fold1}. In fact, we have $\pi_0(L_1) \le \pi(M)$ by the same reason as in Theorem \ref{relativenoether3fold1}. It implies that $|M|_{F'}|$ is the proper transform of $|K_F|$. By our assumption, $|K_F|$ has no fixed part. Thus $|\pi(M)|_F|  = |K_F|$ and $d_{M} = K^2_F$.

\textbf{Case 3}. Suppose $K^2_F \le 3 p_g(F) - 5$ and $|K_F|$ has a fixed part, i.e.,
$$
|K_F| = |V| + Z
$$
where $Z>0$. In this case, $|L_1|_{F_1}|$ is the proper transform of $|V|$.

Let $\pi: X' \to X$ and $|M|$ be the same as in Case 2. We still have
$$
h^0(\omega_{X/Y}) \le \left(\frac 16 + \frac{3}{2d_{M}} \right) \omega^3_{X/Y} + \frac {d_{M} + 7}{3},
$$
where $d_{M} = (\pi^*K_F)(M|_{F'})$. Using the similar argument, we can prove that $|\pi(M)|_F| = |V|$. As before, we only need to show that $K_FV = K^2_F$.

By the 2-connectedness of the canonical divisor, $VZ \ge 2$. Note that $V^2 \ge 3p_g(F)-7$ by Theorem \ref{castelnuovo}. Thus
$$
3 p_g(F) - 5 \ge K^2_F \ge K_F V = V^2 + VZ \ge 3 p_g(F)-5.
$$
It implies that $K_F V = 3 p_g(F)-5=K^2_F$.

It finishes the proof.
\end{proof}

As is said before, we need to show the following lemma.
\begin{lemma} \label{est}
For any $i>0$, if $r_{i-1} > r_i$, we have
$$
r_i \le \frac 13 d_i + \frac{d_{i-1}-d_i}6 + \frac 32.
$$
\end{lemma}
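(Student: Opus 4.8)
The plan is to reinterpret everything on the general fiber $F$ and reduce to the surface Castelnuovo estimates of Section 4. First I would set $S:=F_i$ and note that $\mu_i:F_i\to F$ (the restriction of $\rho_i$) is a composition of blow-ups of $F$, which is a \emph{minimal} surface of general type because $\omega_{X/Y}$ is nef and hence $\omega_{X/Y}|_F=K_F$ is nef. Writing $K:=P_i|_{F_i}=\mu_i^*K_F=\sigma^*K_{S'}$ in the notation of Section 4, $L:=L_i|_{F_i}$ and $Z:=Z_i|_{F_i}$, the decomposition of Theorem \ref{decomposition} restricts on the fiber to $\pi_{i-1}^*(L_{i-1}|_{F_{i-1}})=L+Z$ with $Z>0$. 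Setting $\tilde L':=L+Z$ I then get $h^0(\tilde L')=r_{i-1}$, $h^0(L)=r_i$, $KL=d_i$, $K\tilde L'=d_{i-1}$, and therefore $KZ=d_{i-1}-d_i$. The chain $L\le \tilde L'\le K\le K_{F_i}$ holds since every $L_j$ is obtained from $\omega_{X/Y}$ by removing effective divisors; moreover $F_i$ inherits the "no hyperelliptic pencil" property from $F$, and the excluded value $(p_g(F),K^2_F)=(2,2)$ is exactly the exceptional triple in Theorem \ref{castelnuovo2}. Crucially, the hypothesis $r_{i-1}>r_i$ gives $r_i<r_{i-1}\le h^0(K)=p_g(F)=p_g(S)$, so $h^0(L)<p_g(S)$ always holds.

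With this dictionary the target reads $r_i\le \frac13 KL+\frac16 KZ+\frac32$, and I would split on $|L|$. If $|L|$ is composed with a pencil, Theorem \ref{castelnuovo2} gives $KL\ge 3r_i-3$ (the exceptional case being excluded), so $r_i\le \frac13 KL+1$, which is stronger than needed. If $|L|$ is not composed with a pencil, then $h^0(L)<p_g(S)$ lets me apply the second part of Theorem \ref{castelnuovo2} to obtain $KL\ge 3r_i-5$, i.e. $r_i\le \frac13 KL+\frac53$; this already yields the claim whenever $KZ\ge 1$, since then $\frac16 KZ+\frac32\ge \frac53$.

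The one genuinely delicate branch is $|L|$ not composed with a pencil together with $KZ=0$ (equivalently $d_{i-1}=d_i$), and this is where I expect the main obstacle to lie: the Castelnuovo bound on $L$ falls short of the required inequality by exactly $\frac16$. The fix is to transfer the estimate to the larger system $\tilde L'=L+Z$, whose strict gain $r_{i-1}>r_i$ supplies the missing slack. Since $L\le \tilde L'$, the map $\phi_{\tilde L'}$ is again generically finite, so when $r_{i-1}<p_g(S)$ the second part of Theorem \ref{castelnuovo2} applied to $\tilde L'$ gives $d_i=K\tilde L'\ge 3r_{i-1}-5\ge 3r_i-2$, whence $r_i\le \frac13 d_i+\frac23$, comfortably below the bound. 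The remaining boundary value $r_{i-1}=p_g(S)$ I would clear through the canonical bundle: the inclusion $\tilde L'\le K$ with equal $h^0$ forces $\phi_{\tilde L'}$ and $\phi_K$ to define the same map, so $\phi_K$ is generically finite and Theorem \ref{castelnuovo} applied to the pair $K\ge \tilde L'$ gives $d_i=K\tilde L'\ge 3p_g(S)-7\ge 3r_i-4$, which again suffices. Assembling the cases proves the lemma.
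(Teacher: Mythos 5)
Your proof is correct and follows essentially the same route as the paper's: both rest on the Castelnuovo-type bounds of Section 4 applied at levels $i$ and $i-1$, using the strict drop $r_{i-1}>r_i$ to dispose of the borderline case $d_{i-1}=d_i$, $d_i=3r_i-5$ (the paper argues this by contradiction, you argue it directly via $\tilde L'$). Your explicit treatment of the boundary value $r_{i-1}=p_g(S)$, where the hypothesis $h^0(L)<p_g(S)$ of Theorem \ref{castelnuovo2} fails and one must instead invoke Theorem \ref{castelnuovo} through the canonical system, supplies a detail that the paper's one-line appeal to Theorem \ref{castelnuovo2} glosses over.
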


\begin{proof}
The lemma holds if
$$
r_i \le \frac 13 d_i + \frac 43.
$$
If not, then by Theorem \ref{castelnuovo2},
$$
r_i = \frac 13 d_i + \frac 53.
$$
In this case, $d_{i-1}-d_i \ge 1$. Otherwise, we would have
$$
r_{i-1} \ge r_i + 1= \frac 13 d_{i-1} + \frac 83.
$$
It contradicts with Theorem \ref{castelnuovo2}.
\end{proof}

\textit{From now on}, we assume that $X$ is an irregular minimal Gorenstein 3-fold of general type, $f: X \to Y$ is the fibration over a smooth curve $Y$ induced by the Albanese map, and $F$ is a smooth general fiber.
\begin{prop} \label{rough3}
If $p_g(F) > 0$ and $(p_g(F), K^2_F) \ne (2, 2)$, then
$$
\chi(\omega_X) \le  \left(\frac 16 + \frac{3}{2K^2_F} \right) K^3_{X}.
$$
\end{prop}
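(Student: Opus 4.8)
The plan is to run the argument of Proposition \ref{rough2} verbatim, with the Castelnuovo-type estimate replacing the Noether-type one. Since we are in the setting of Theorem \ref{alb12}, the general fiber $F$ carries no hyperelliptic pencil, and the excluded case $(p_g(F),K^2_F)=(2,2)$ is precisely the one ruled out in Theorem \ref{relativenoether3fold2}; hence that theorem is available.

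First I would combine Theorem \ref{relativenoether3fold2} with the relation $\omega^3_{X/Y}=K^3_X-6K^2_F(g(Y)-1)$ to obtain the upper bound
$$
h^0(\omega_{X/Y}) \le \left(\frac16+\frac{3}{2K^2_F}\right)K^3_X - (K^2_F+9)(g(Y)-1) + \frac{K^2_F+7}{3},
$$
the coefficient of $(g(Y)-1)$ being $6K^2_F\bigl(\tfrac16+\tfrac{3}{2K^2_F}\bigr)=K^2_F+9$. For the matching lower bound I would repeat the semipositivity computation from Proposition \ref{rough2}: by Koll\'ar \cite{Ko1,Ko2} both $f_*\omega_{X/Y}$ and $R^1f_*\omega_{X/Y}$ are semipositive, and the estimates of Ohno \cite[Lemma 2.4, 2.5]{Oh} then yield
$$
h^0(\omega_{X/Y}) \ge \chi(\omega_X) - (\chi(\mathcal O_F)+p_g(F))(g(Y)-1).
$$

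Combining the two displays, and using that $g(Y)=h^1(\mathcal O_X)\ge1$ so that $g(Y)-1\ge0$, the statement reduces by Remark \ref{observ} to the single numerical inequality $K^2_F+9\ge\chi(\mathcal O_F)+p_g(F)$: once this holds, the coefficient of $(g(Y)-1)$ is nonpositive and may be discarded, leaving an inequality of the shape $K^3_X\ge a_F\chi(\omega_X)+b_F$ to which Remark \ref{observ} applies. To verify it I would write $\chi(\mathcal O_F)+p_g(F)=1-q(F)+2p_g(F)\le1+2p_g(F)$ and invoke the classical Noether inequality $2p_g(F)\le K^2_F+4$, giving $\chi(\mathcal O_F)+p_g(F)\le K^2_F+5\le K^2_F+9$ with room to spare. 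I expect no genuine obstacle here: all the geometric content is absorbed into Theorem \ref{relativenoether3fold2}, and the only point needing care is checking that the constant $K^2_F+9$ produced by the relative Castelnuovo inequality dominates $\chi(\mathcal O_F)+p_g(F)$, which the Noether inequality guarantees.
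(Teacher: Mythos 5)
Your proposal is correct and follows the paper's own proof essentially verbatim: the same upper bound from Theorem \ref{relativenoether3fold2} combined with $\omega^3_{X/Y}=K^3_X-6K^2_F(g(Y)-1)$, the same Koll\'ar/Ohno lower bound, and the same reduction via Remark \ref{observ} to the inequality $K^2_F+9\ge\chi(\mathcal O_F)+p_g(F)$, checked with Noether's inequality. No issues.
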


\begin{proof}
The proof is similar to Proposition \ref{rough2}. We sketch it here.
From Theorem \ref{relativenoether3fold2}, we get
$$
h^0(\omega_{X/Y}) \le \left(\frac 16 + \frac{3}{2K^2_F} \right) K^3_{X} - (K^2_F+9) (g(Y)-1) + \frac {K^2_F + 7}{3}.
$$
We still have
$$
h^0(\omega_{X/Y}) \ge \chi(\mathcal \omega_X) - (\chi(\mathcal O_F)+p_g(F)) (g(Y)-1).
$$
To prove the conclusion, by applying Remark \ref{observ}, it suffices to prove that
$$
K^2_F + 9 \ge \chi(\mathcal O_F)+p_g(F),
$$
which follows from the Noether inequality.
\end{proof}

\begin{proof}[Proof of Theorem \ref{alb12}]
If $p_g(F)=0$, then the theorem is true by Proposition \ref{pg0}. If $p_g(F) > 0$, by Proposition
\ref{rough3}, the theorem holds if $K^2_F \ge 9$.
\end{proof}

\begin{remark}
By the Castelnuovo inequality
$$
K^2_F \ge 3p_g(F) - 7,
$$
we know that $K^2_F \ge 9$ provided that $p_g(F) \ge 6$. In particular, it means that under the same assumption as in Theorem \ref{alb12}, we have
$$
K^3_X \ge 6 \chi(\omega_X)
$$
provided that $p_g(F) \ge 6$.
\end{remark}

The same as Theorem \ref{alb1}, one might guess that $K^2_F \ge 3$ will ``almost" imply that $K^3_X \ge 3 \chi(\omega_X)$.
But it could be probably optimistic. Anyway, it is true when $p_g(F)=0$ by Proposition \ref{pg0}. It is also true when
$p_g(F)=1$.

\begin{prop} \label{2pg1}
If $p_g(F)=1$ and $K^2_F > 2$, then
$$
K^3_X \ge 3\chi(\omega_X).
$$
\end{prop}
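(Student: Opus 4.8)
The plan is to run the same three-way case analysis used in the proof of Proposition~\ref{pg1}, but to exploit the stronger hypothesis $K_F^2 \geq 3$ in order to upgrade every bound from coefficient $2$ to coefficient $3$. Since $p_g(F)=1>0$ we again have $p_g(X)>0$, and by Riemann--Roch on the $3$-fold one records $\chi(\omega_X)=p_g(X)+g(Y)-h^2(\mathcal O_X)-1\le p_g(X)+g(Y)-1$, where $g(Y)=h^1(\mathcal O_X)\ge 1$. I would split according to the pair $(p_g(X),g(Y))$ exactly as before.

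First suppose $p_g(X)\ge 2$. Adjunction gives $\omega_X|_F=\omega_F$, so the restriction $H^0(\omega_X)\to H^0(\omega_F)$ has image of dimension at most $p_g(F)=1$; hence $h^0(K_X-F)\ge p_g(X)-1\ge 1$ and, iterating, $|K_X|$ is composed with the fibration $f$. Writing $K_X\sim_{\mathrm{num}} rF+Z$ with $Z\ge 0$ and $r\ge p_g(X)$, the computation in Proposition~\ref{pg1} yields $K_X^3\ge (r+2g(Y)-2)K_F^2$. Using $K_F^2\ge 3$ and the bound on $\chi(\omega_X)$ above, this gives $K_X^3\ge 3(p_g(X)+2g(Y)-2)\ge 3\chi(\omega_X)$, since $g(Y)\ge 1$. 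Next suppose $p_g(X)=1$ and $g(Y)\ge 2$; then $\chi(\omega_X)\le g(Y)$. From the nefness of $\omega_{X/Y}$ I get $\omega_{X/Y}^3\ge 0$, hence $K_X^3\ge 6(g(Y)-1)K_F^2\ge 18(g(Y)-1)\ge 3g(Y)\ge 3\chi(\omega_X)$, the penultimate inequality holding because $g(Y)\ge 2$.

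The remaining case is $p_g(X)=g(Y)=1$, where $\chi(\omega_X)=1$ and $Y$ is an elliptic curve. Here I would argue by étale base change as in Proposition~\ref{pg1}: for a nontrivial degree-$d$ étale cover $\tilde Y\to Y$ set $X'=X\times_Y\tilde Y$, a minimal Gorenstein $3$-fold of general type whose general fibre over $\tilde Y$ is again $F$ and with $\chi(\omega_{X'})=d\,\chi(\omega_X)=d>1$. If $X'$ has Albanese dimension $\ge 2$, Theorem~\ref{3fold} gives $K_{X'}^3\ge 4\chi(\omega_{X'})\ge 3\chi(\omega_{X'})$. Otherwise $X'$ has Albanese dimension one, its Albanese fibration being $X'\to\tilde Y$ with general fibre $F$, so the invariants $p_g(F)=1$ and $K_F^2\ge 3$ are unchanged. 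Since $\chi(\omega_{X'})>1$ forces $p_g(X')\ge 2$ or $h^1(\mathcal O_{X'})\ge 2$, the first two cases applied to $X'$ give $K_{X'}^3\ge 3\chi(\omega_{X'})$. In either event, dividing $K_{X'}^3=d\,K_X^3$ and $\chi(\omega_{X'})=d\,\chi(\omega_X)$ by $d$ yields $K_X^3\ge 3\chi(\omega_X)$.

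The only delicate point is this last reduction: it requires that the coefficient-$3$ estimates of the first two cases genuinely apply to $X'$, which depends on the general Albanese fibre of $X'$ (when the Albanese dimension stays one) still satisfying $K^2\ge 3$ and $p_g=1$. This is guaranteed because an étale base change over the curve does not alter the general fibre. It is precisely here, and in the first case, that $K_F^2\ge 3$ rather than merely $K_F^2\ge 2$ is essential: the extra unit in $K_F^2$ is exactly what converts the coefficient $2$ of Proposition~\ref{pg1} into the coefficient $3$ claimed here, while in the second case even $K_F^2\ge 2$ would have sufficed.
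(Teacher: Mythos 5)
Your proposal is correct and is essentially the paper's intended argument: the paper omits the proof of this proposition, stating only that it is ``very similar to Proposition~\ref{pg1}'', and your three-case analysis (with the coefficient upgraded from $2$ to $3$ via $K^2_F\ge 3$, plus the same \'etale base change for the case $p_g(X)=h^1(\mathcal O_X)=1$) is precisely that adaptation.
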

\begin{proof}
The proof is very similar to Proposition \ref{pg1}. We omit it here.
\end{proof}

\end{document}